\documentclass[a4paper,11pt]{amsart}
\usepackage[T1]{fontenc}
\usepackage[foot]{amsaddr}
\usepackage{geometry}
\usepackage{amsfonts}
\usepackage{mathrsfs}  
\usepackage{amsthm}
\usepackage{amssymb}
\usepackage{amsmath}
\usepackage{comment}
\usepackage{theoremref}
\usepackage{enumerate}
\usepackage{bbm}
\usepackage{bm}
\usepackage{comment}
\usepackage{mathtools}
\usepackage{a4wide}
\usepackage{xcolor}
\usepackage{lipsum}

\makeatletter
\newcommand{\proofpart}[2]{%
    \par
  \addvspace{\medskipamount}%
  \noindent\emph{Step #1: #2}\par\nobreak
  \addvspace{\smallskipamount}%
  \@afterheading
}
\makeatother

\DeclarePairedDelimiter\abs{\lvert}{\rvert}%
\DeclarePairedDelimiter\norm{\lVert}{\rVert}%

\makeatletter
\let\oldabs\abs
\def\abs{\@ifstar{\oldabs}{\oldabs*}}
\let\oldnorm\norm
\def\norm{\@ifstar{\oldnorm}{\oldnorm*}}
\makeatother

\makeatletter
\g@addto@macro\bfseries{\boldmath}
\makeatother

\newcommand{\C}{\mathbb{C}}
\newcommand{\Dy}{\mathcal{D}}

\newcommand{\T}{\mathbb{T}}

\newcommand{\Ka}{\mathcal{K}}
\newcommand{\conj}[1]{\overline{#1}}
\newcommand{\D}{\mathbb{D}}

\newcommand{\Po}{\mathcal{P}}

\newcommand{\cD}{\conj{\mathbb{D}}}

\newcommand{\dist}[2]{\text{dist}( #1, #2 ) }

\newcommand{\hil}{\mathcal{H}}

\renewcommand{\Dy}{\mathcal{D}}

\renewcommand\Re{\operatorname{Re}}

\newcommand{\supp}[1]{\text{supp}({#1})}

\newtheorem{thm}{Theorem}[section]
\newtheorem{lemma}[thm]{Lemma}

\newtheorem{cor}[thm]{Corollary}
\newtheorem{prop}[thm]{Proposition}

\theoremstyle{definition}

\theoremstyle{definition}

\newtheorem{remark}[thm]{Remark}

\newcommand{\Addresses}{{
		\bigskip
		\footnotesize
         Linus Bergqvist, \\ \textsc{Stockholm, Sweden} \\
        \texttt{linus.lidman.bergqvist@gmail.com}

        \medskip
		
		Adem Limani, \\ \textsc{Centre for Mathematical Sciences, \\ Lund University, Sweden }\\
		\texttt{adem.limani@math.lu.se}
		
		\medskip
		Bartosz Malman, \\ \textsc{Institutionen för ekonomi och matematik, \\ M\"alardalen University, V\"aster\aa s, Sweden} \\
		\texttt{bartosz.malman@mdu.se}

	}}

\begin{document}
\title{\textbf{Uniqueness sets in weighted $\ell^2$-spaces via simultaneous approximation}} 

\author{Linus Bergqvist, Adem Limani, Bartosz Malman} 

\date{\today}

\begin{abstract}
    We consider a family of uniqueness problems in the setting of weighted $\ell^2$ Fourier spaces on the unit-circle, from the perspective of simultaneous approximation. We develop a general potential theoretical framework, which enables us to extend and unify several classical results by Ahlfors, Beurling, Frostman and Khrushchev, previously known in the more classical regimes. In addition, we prove a surprisingly sharp result that illustrates a deep discrepancy between unilateral and bilateral uniqueness sets.
\end{abstract}
\thanks{The second author acknowledges his financial support from the Knut \& Alice Wallenberg Foundation (grant no. 2021.0294). The third author acknowledges his financial support from from Vetenskapsrådet (VR2024-03959).}
\maketitle

\section{Background}\label{SEC:INTRO}

\subsection{Uniqueness problems on small exceptional sets}
Given a number $0\leq \alpha \leq 1$, we denote by $\hil^{-\alpha}$ the Dirichlet--Sobolev space of distributions $u$ on the unit-circle $\T \cong (-\pi,\pi]$ satisfying
\[
\sum_{n\in \mathbb{Z}} \frac{\abs{\widehat{u}(n)}^2}{(1+|n|)^{\alpha}} < \infty.
\]
Here the sequence $\{\widehat{u}(n)\}_{n\in \mathbb{Z}}$ denotes the Fourier coefficients of $u$, defined as
\[
\widehat{u}(n) := u\left(e^{it} \mapsto e^{-int} \right), \qquad n\in \mathbb{Z}.
\]
In other words, we identify $\hil^{-\alpha}$ as isometrically isomorphic with the weighted $\ell^2(w)$-space with $w(n)=(1+|n|)^{-\alpha}$ via its sequence of Fourier coefficients. These fractional Dirichlet--Sobolev spaces play a central role in function theory, potential theory, and in geometric measure theory, see \cite{MatillaFourier}. Indeed, a classical theorem of Frostman asserts that they can be utilized in computing the Hausdorff dimension of a compact set $E\subset \T$ as follows:
\[
\textbf{dim}_H(E) := \sup\{ \alpha>0: \, \exists \mu \in M_+(E) : \, \,  \mu \in \hil^{-(1-\alpha)} \},
\]
where $M_+(E)$ denotes the set of positive finite Borel measures supported on $E$. 

If $\mu \in M(\T)$ is real-valued, then a straightforward expansion involving Fourier series yields the so-called $(1-\alpha)$-Riesz energy of $\mu$:
\[
\sum_{n\in \mathbb{Z}} \frac{\abs{\widehat{\mu}(n)}^2}{(1+|n|)^{\alpha}} \asymp \int_{\T} \int_{\T} \frac{1}{|\zeta-\xi|^{1-\alpha}} d\mu(\zeta) d\mu(\xi) ,
\]
for $0<\alpha<1$. It is of interest to describe supporting sets for positive measure belonging to $\hil^{-\alpha}$. For compact sets $E\subset \T$ we define the associated $\alpha$-Riesz capacity as the quantity
\[
\operatorname{Cap}_{\alpha}(E) := \sup_{\substack{\mu \in M_+(E) \\
\mu(E)=1}} \left( \int_{\T} \int_{\T} \frac{1}{|\zeta-\xi|^{\alpha}} d\mu(\zeta) d\mu(\xi) \right)^{-1}.
\]
In the case $\alpha=1$, we have for real-valued $\mu \in M(\T)$ the comparability
\[
\sum_{n\in \mathbb{Z}} \frac{\abs{\widehat{\mu}(n)}^2}{1+|n|} \asymp \int_{\T} \int_{\T} \log \frac{2}{|\zeta-\xi|} d\mu(\zeta) d\mu(\xi),
\]
and thus retrieve the notion of the so-called \emph{logarithmic capacity}
\[
\operatorname{Cap}_0(E) := \sup_{\substack{\mu \in M_+(E) \\
\mu(E)=1}} \left( \int_{\T} \int_{\T} \log \frac{2}{|\zeta-\xi|} d\mu(\zeta) d\mu(\xi) \right)^{-1}.
\]

Now a classical result of Beurling asserts that it suffices to assume that there exists a distribution with the same property.

\begin{thm}[Beurling, \cite{beurling1949spectres}]\thlabel{THM:BEURLING} Let $0\leq \alpha <1$. If a compact set $E\subset \T$ supports a non-trivial distribution $s$ with 
\[
\sum_{n\in \mathbb{Z}} \frac{\abs{\widehat{s}(n)}^2}{(1+|n|)^{1-\alpha}} <\infty,
\]
then $\operatorname{Cap}_{\alpha}(E)>0$, and thus $E$ also supports a probability measure with the same property.
\end{thm}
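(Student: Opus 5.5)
We argue by contraposition: assume $\operatorname{Cap}_{\alpha}(E)=0$ and show that every distribution $s$ supported on $E$ with $\sum_n |\widehat{s}(n)|^2(1+|n|)^{-(1-\alpha)}<\infty$ vanishes. The framework is duality. Write $\hil^{-(1-\alpha)}$ for the space in the statement. It is the dual of the Sobolev space $\hil^{1-\alpha}$ of functions $f$ with $\sum_n |\widehat f(n)|^2(1+|n|)^{1-\alpha}<\infty$, under the pairing $\langle s,f\rangle=\sum_n \widehat{s}(n)\widehat f(-n)$. So it suffices to produce, for every $\phi\in C^\infty(\T)$, smooth functions $f_k$ with the following two properties:
\begin{enumerate}[(i)]
\item $f_k=\phi$ on a neighbourhood of $E$;
\item $\norm{f_k}_{\hil^{1-\alpha}}\to 0$.
\end{enumerate}
Then $\langle s,\phi\rangle=\langle s,f_k\rangle$, because $s$ is supported on $E$ and $\phi-f_k$ vanishes near $E$. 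The right-hand side is bounded by $\norm{s}\norm{f_k}\to0$. Hence $s$ annihilates all test functions, so $s=0$.

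The construction of the $f_k$ is the classical capacity step. For $0<\alpha<1$, the Riesz kernel $|\zeta-\xi|^{-\alpha}$ has Fourier coefficients comparable to $(1+|n|)^{\alpha-1}$. Hence the energy of a measure $\mu$ is comparable to $\norm{\mu}^2_{\hil^{-(1-\alpha)}}$, and the capacity $\operatorname{Cap}_\alpha$ is comparable to the condenser capacity
\[
\inf\{\norm{f}^2_{\hil^{1-\alpha}} : f\in C^\infty(\T),\ f\geq 1 \text{ on a neighbourhood of } E\}.
\]
The case $\alpha=0$ works the same way with the logarithmic kernel, whose coefficients are comparable to $(1+|n|)^{-1}$.

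The plan for the equivalence is the standard one. Let $U_j\downarrow E$ be open neighbourhoods. On a nice compact neighbourhood $K_j$ take the equilibrium measure $\mu_j$. Its potential $V_j=k_\alpha*\mu_j$ satisfies $V_j\geq 1$ quasi-everywhere on $K_j$, and one has $\norm{V_j}^2_{\hil^{1-\alpha}}\asymp \langle \mu_j,V_j\rangle=\operatorname{Cap}_\alpha(K_j)$. Outer regularity of capacity on compacts gives $\operatorname{Cap}_\alpha(K_j)\to\operatorname{Cap}_\alpha(E)=0$. Then mollify $V_j$ and truncate it via a smooth contraction $g_j=\chi(V_j)$ with $\chi(t)=1$ for $t\geq 1-\epsilon$. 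This yields smooth $g_j$ equal to $1$ near $E$ with $\norm{g_j}_{\hil^{1-\alpha}}\to0$.

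The truncation step uses that $\hil^{1-\alpha}$ is a Dirichlet space for $0<1-\alpha\leq 1$. Its norm has the form
\[
\iint \frac{|f(x)-f(y)|^2}{|x-y|^{2-\alpha}}\,dx\,dy+\Big|\int f\Big|^2,
\]
so normal contractions do not increase the seminorm.

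Finally set $f_j=\phi g_j$. The product estimate $\norm{\phi g}_{\hil^{1-\alpha}}\lesssim_\phi \norm{g}_{\hil^{1-\alpha}}$ holds for smooth multipliers $\phi$, and can be read off from the same difference-quotient formula. Hence $f_j$ satisfies (i) and (ii).

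The main obstacle is making the potential step rigorous. One must ensure that the mollified and truncated potentials are genuinely $\geq 1$ on a full neighbourhood of $E$, not merely quasi-everywhere. One must also keep the $\hil^{1-\alpha}$-norm controlled through mollification and through the mean term, which is where the $\alpha=0$ logarithmic case needs extra care.

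I would first try to sidestep this by working with the open sets $U_j$ directly. On an open set, the capacitary potential of a slightly larger compact set is $\geq 1$ everywhere on a smaller open set after applying the maximum principle for Riesz potentials. Mollification then preserves $\geq 1-\epsilon$ near $E$.

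The conclusion that $E$ supports a probability measure of finite energy, i.e.\ one in $\hil^{-(1-\alpha)}$, then follows from the definition of $\operatorname{Cap}_\alpha(E)>0$.
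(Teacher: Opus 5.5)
Your proposal is correct and is essentially the paper's argument: the proof of \thref{THM:Distrib}, specialised to $W(t)=t^{-\alpha}$ so that $\omega(n)\asymp(1+n)^{-(1-\alpha)}$ and $\operatorname{Cap}_\omega\asymp\operatorname{Cap}_\alpha$, built on the simultaneous approximation of \thref{PROP:SAdistGENERAL} (equilibrium potentials of shrinking neighbourhoods, truncation controlled by the Douglas formula, mollification, then pairing against $s$), with your multipliers $\phi g_j$ playing the role of the paper's reduction to $\widehat{S}(0)\neq 0$ by passing to $\zeta^k S$. The obstacle you flag disappears as in the paper, and you need neither a maximum principle (which gives upper rather than lower bounds anyway) nor extra care for the mean term when $\alpha=0$: the quasi-everywhere exceptional set is Lebesgue-null, so $V_j\geq 1$ a.e.\ on $K_j$ already gives $V_j*\varphi_\delta\geq 1$ at every point of a neighbourhood of $E$ once $\delta<\text{dist}(E,\T\setminus K_j)$, and $0\leq \chi(F)\lesssim F$ for $F\geq 0$ while mollification never increases $\abs{\widehat{F}(n)}$.
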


A compact set $E$ which does not support a non-trivial distribution $s \in \hil^{-\alpha}$ is said to be a \emph{uniqueness set for} $\hil^{-\alpha}$. In fact, Beurling's theorem may be interpreted as a result within the category of uniqueness problems on Fourier series. It relates to the so-called Piatetski--Shapiro phenomenon, which was discovered in \cite{Piatetski-shapiro}: 

\textit{There exists a compact set $E\subset \T$ which supports a non-trivial distribution $u$ with
\[
\widehat{u}(n) \to 0, \qquad |n|\to \infty,
\]
but $E$ supports no non-trivial (Rajchman) measure on $\T$ with the same property.}

More recently, Lev and Olevskii proved that the same type of phenomenon also occurs in the setting of $\ell^p$ for $p>2$, see \cite{LevOlevskiiPS}. From this point of view, Beurling's Theorem asserts that the Piatetski--Shapiro phenomenon does not occur in a certain range of weighted $\ell^2$-spaces. 

However, if we restrict our attention to measures, then a classical result by Rajchman asserts that if $\mu\in M(\T)$ with $\widehat{\mu}(n) \to 0$ as $n\to \infty$ (or as $n\to -\infty$), then we automatically retrieve a bilateral spectral decay:
\[
\widehat{\mu}(n) \to 0, \qquad |n|\to \infty.
\]
More quantitative results of this nature were further obtained by de Leeuw and Katznelson in \cite{deleeuw1970two}. Closer to our setting of Dirichlet--Sobolev spaces, it turns out that a similar symmetric rigidity phenomenon occurs in $\hil^{-1}$:

\begin{thm}[Khrushchev--Peller] If $\mu \in M(\T)$ has positive Fourier frequencies satisfying
\[
\sum_{n>0} \frac{\abs{\widehat{\mu}(n)}^2}{1+n}< \infty,
\]
then in fact we have a bilateral summability $\mu \in \hil^{-1}$.
\end{thm}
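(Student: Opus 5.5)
The plan is to argue by duality with the Dirichlet space, in Beurling's spirit. Write $\mathcal{D}$ for the analytic Dirichlet space with norm $\|p\|_{\mathcal{D}}^2=\sum_{n\ge 0}(1+n)|\widehat{p}(n)|^2$. By Cauchy--Schwarz,
\[
\sum_{n>0}\frac{|\widehat{\mu}(n)|^2}{1+n}<\infty \iff \Big|\int_{\T}\conj{p}\,d\mu\Big|\lesssim \|p\|_{\mathcal{D}}
\]
for analytic polynomials $p$, since $\int\conj{p}\,d\mu=\sum_{n\ge0}\conj{\widehat{p}(n)}\,\widehat{\mu}(n)$. Likewise, $\mu\in\hil^{-1}$ is equivalent to additionally having $|\int p\,d\mu|\lesssim\|p\|_{\mathcal{D}}$, because that integral only sees $\widehat{\mu}(-n)$, $n\ge 0$.

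Equivalently, with $u=\Re p$ and $v=\Im p$ ranging over real trigonometric polynomials with finite Douglas energy, the goal is to control $\int u\,d\mu$ and $\int v\,d\mu$ separately. Then $\int p\,d\mu = \int \conj p\,d\mu + 2i\int v\, d\mu$. So the whole statement reduces to proving
\[
\Big|\int_{\T} v\,d\mu\Big|\lesssim \|v\|_{\mathcal{D}}+\|v\|_\infty\,|\mu|(\T)
\]
for real $v$, where $\|v\|_{\mathcal D}^2 = \sum_n (1+|n|)|\widehat v(n)|^2$. The term $\|v\|_\infty|\mu|(\T)$ is harmless, because we can first reduce to bounded test functions. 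This reduction uses two facts: bounded functions are dense in the real Dirichlet space, since normal contractions $v\mapsto \max(-M,\min(v,M))$ operate on it and do not increase the Douglas energy; and $\mu$ is a finite measure.

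The key step is to represent a bounded real $v$ using analytic (rather than antianalytic) Dirichlet functions on which the hypothesis applies. The choice I would try first is the exponential: for real bounded $v$ with harmonic conjugate $\tilde v$, the function $F_t=e^{t(v+i\tilde v)}$ is analytic, and $\conj{F_t}=e^{tv}e^{-it\tilde v}$. This does not immediately isolate $v$, so I would instead use $G=e^{i(v+i\tilde v)}=e^{-\tilde v}e^{iv}$. On $\T$ this gives $\conj{G}\,|G|^{-2}\cdot|G|^2=\dots$, whose real/imaginary structure contains $\cos v,\sin v$. Expanding in a small parameter $t$, so that $v\approx \frac{1}{t}\Im(\text{analytic})$ up to errors that are products of bounded Dirichlet functions, would express $\int v\,d\mu$ through integrals of the form $\int\conj{q}\,d\mu$ with $q\in\mathcal{D}\cap H^\infty$. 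We then control the Dirichlet norms of these $q$ using the algebra property of $\mathcal{D}\cap L^\infty$:
\[
\|fg\|_{\mathcal{D}}\lesssim \|f\|_\infty\|g\|_{\mathcal{D}}+\|g\|_\infty\|f\|_{\mathcal{D}}.
\]
Boundedness of $\tilde v$ is a problem here. I would handle it by truncating $\tilde v$ in the same way as $v$ before exponentiating, which is again a normal contraction, and estimating the resulting discrepancy on the set where the truncation is active.

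I expect this last point to be the main obstacle: converting the real-part information $\int u\,d\mu$ into imaginary-part information without losing control of either the Dirichlet norm or the sup norm. If the exponential trick leaves error terms that are not controlled by $|\mu|(\T)\|\cdot\|_\infty$, the fallback is the potential-theoretic route. Write $P[\mu]=C\mu(z)-C\mu(1/\conj z)$, where $C\mu$ is the Cauchy transform. The hypothesis says $C\mu\in L^2(\D,dA)$, and the conclusion is $P[\mu]\in L^2(\D,dA)$. One would then try to bound the exterior Cauchy integral by the interior one plus $|\mu|(\T)^2$ via a reflection/Green's formula argument across $\T$.
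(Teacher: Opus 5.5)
\textbf{Verdict: there is a genuine gap at your central reduction.} The paper only quotes this theorem from Khrushchev--Peller and gives no proof, so I judge your proposal on its own.

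First, the identity $\int p\,d\mu=\int\conj{p}\,d\mu+2i\int v\,d\mu$ does not advance anything. Bounding $\int v\,d\mu$ by $\|v\|_{\mathcal D}$ for all real $v$ is already equivalent to $\mu\in\hil^{-1}$, so the hypothesis has not yet done any work.

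Second, the inequality you reduce to, $|\int v\,d\mu|\lesssim\|v\|_{\mathcal D}+\|v\|_\infty|\mu|(\T)$, is trivially true for every finite measure, since $|\int v\,d\mu|\le\|v\|_\infty|\mu|(\T)$. In particular it holds for $\delta_1\notin\hil^{-1}$. You justify discarding the $L^\infty$ term only by density of bounded functions and finiteness of $\mu$, both of which hold for $\delta_1$, so that step cannot be valid. The underlying reason is that $\|v\|_\infty$ is not dominated by $\|v\|_{\mathcal D}$, even on bounded $v$.

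Since the hypothesis only controls $\int\conj{q}\,d\mu$ for analytic $q$, what you would actually need is this: for every analytic $p$, an analytic $q$ with $\|q\|_{\mathcal D}+\|p-\conj{q}\|_\infty\lesssim\|p\|_{\mathcal D}$. By the closed graph theorem and Hahn--Banach, this decomposition is equivalent to the theorem itself. It also forces $\mathcal D\subset P_+L^\infty=\mathrm{BMOA}$, which fails for $\hil^{\alpha}$, $\alpha<1$. Your exponential step does not produce such a $q$:
\begin{itemize}
\item no actual identity is written down;
\item expanding $e^{tF}$ in $t$ only returns, at first order, the pairings $\int F\,d\mu$ and $\int\conj{F}\,d\mu$ you started from;
\item truncating $\tilde v$ destroys the analyticity on which the hypothesis depends.
\end{itemize}
More structurally, every tool you invoke (Douglas formula, normal contractions, the algebra $\mathcal D\cap L^\infty$, harmonic conjugation) works equally well in $\hil^{\alpha}$ for $0<\alpha<1$. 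There the analogous statement is false (Kozma--Olevskii, as the paper recalls), so an argument built only from these tools cannot succeed. The reflection/Green fallback is not specified and meets the same objection.

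\textbf{The missing ingredient.} What you need is specific to $\alpha=1$, and it is a quadratic estimate rather than a linear duality statement. For $0<r<1$, expanding $|\widehat{\mu}(n)|^2=\iint\conj{\zeta}^n\xi^n\,d\mu(\zeta)\,d\conj{\mu}(\xi)$ gives
\[
\sum_{n\geq 1}\frac{r^{2n}}{n+1}\bigl(|\widehat{\mu}(n)|^2-|\widehat{\mu}(-n)|^2\bigr)=\iint k_r(\conj{\zeta}\xi)\,d\mu(\zeta)\,d\conj{\mu}(\xi),\qquad k_r(e^{it})=2i\sum_{n\geq 1}\frac{r^{2n}\sin nt}{n+1}.
\]
Write $\frac{1}{n+1}=\frac{1}{n}-\frac{1}{n(n+1)}$ and use $\sum_{n\geq 1}\rho^n\frac{\sin nt}{n}=-\arg(1-\rho e^{it})\in(-\frac{\pi}{2},\frac{\pi}{2})$ for $0<\rho<1$. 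This gives $\|k_r\|_\infty\leq\pi+2$ uniformly in $r$. In other words, the harmonic conjugate of the logarithmic kernel $\sum_{n\geq 1}\frac{\cos nt}{n}$ is the bounded sawtooth $\frac{\pi-t}{2}$. Hence
\[
\sum_{n\geq 1}\frac{r^{2n}|\widehat{\mu}(-n)|^2}{n+1}\leq\sum_{n\geq 1}\frac{|\widehat{\mu}(n)|^2}{n+1}+(\pi+2)|\mu|(\T)^2,
\]
and letting $r\to 1^-$ gives $\mu\in\hil^{-1}$. For $\omega(n)=n^{-\alpha}$ with $\alpha<1$, the corresponding kernel $\sum_{n\geq 1}n^{-\alpha}\sin nt$ behaves like $t^{\alpha-1}$ near $0$. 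This is consistent with the failure of the statement in that range.
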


Phrased differently, if the Cauchy integral 
\[
\Ka(\mu)(z) := \int_{\T} \frac{d\mu(\zeta)}{1-\conj{\zeta}z}= \sum_{n\geq 0} \widehat{\mu}(n)z^n, \qquad z\in \D
\]
of a measure $\mu \in M(\T)$ belongs to $\hil^{-1}$, then so does the measure $\mu$ itself. However, as observed in \cite{kozma2013singular}, in the setting of $\hil^{-\alpha}$ for $0<\alpha<1$, this result fails \footnote{Nikolaos Chalmoukis has shown the authors an independent proof of this claim, exploiting the sharpness of isoperimetrical inequality involving the embedding of the Hardy space $H^1$ into (the Bergman space) $\hil^{-1}$.}. One may still wonder if unilateral Fourier summability in $\hil^{-\alpha}$ still ensures bilateral summability, at the level of supporting sets. We will address this question later.

The general philosophy in potential theory is that the aforementioned notions of capacity typically characterize exceptional sets for their associated dual spaces, for example negligible zero sets and sets of singularities. To clarify this point, we recall that the dual space of $\hil^{-\alpha}$ may be identified with the Dirichlet-type spaces $\hil^{\alpha}$ consisting of $f\in L^2(\T,dm)$ satisfying
\[
\sum_{n\in \mathbb{Z}} \abs{\widehat{f}(n)}^2 (1+|n|)^{\alpha} < \infty.
\]
Considering Poisson extensions of elements $f\in \hil^{\alpha}$ to the unit-disc $\D=\{|z|<1\}$:
\[
P(f)(z) = \int_{\T} \frac{1-|z|^2}{|\zeta-z|^2} f(\zeta) dm(\zeta), \qquad z\in \D,
\]
we may isomorphically identify $\hil^{\alpha}$ with the space of harmonic functions in $\D$ satisfying
\[
\int_{\D} \abs{\nabla P(f)(z)}^2 (1-|z|)^{1-\alpha} dA(z) < \infty,
\]
where $dA$ denotes the unit-normalized area measure on $\D$. A classical result of Beurling asserts that for any $0\leq \alpha <1$, there exists a constant $C(\alpha)>0$, such that the following weak-type estimate holds for capacities:
\[
\operatorname{Cap}_{\alpha} \left( \{\zeta \in \T: \abs{f(\zeta)}> \lambda \} \right) \leq C(\alpha) \frac{\norm{f}^2_{\hil^{1-\alpha}}}{\lambda^2} \qquad \lambda>0, \qquad f\in C(\T) \cap \hil^\alpha.
\]
The above weak-type estimate lies at the heart of showing that elements in $\hil^{\alpha}$ are well-defined modulo a set of zero $\alpha$-capacity. Furthermore, Carleson proved in \cite{carlesonuniqueness} that compact sets $E\subset \T$ of zero $\alpha$-capacity support functions in $\hil^\alpha$ with blow-up precisely there, and which vanish precisely there, clarifying their role as exceptional sets.

\subsection{Uniqueness problems on large exceptional sets}
The problem of uniqueness sets also makes sense in the dual framework. More specifically, for $\alpha \in (0,1]$ one may ask to describe which compact sets $E \subset \T$ support a non-trivial function $f\in \hil^{\alpha}$. For $\alpha=1$, the result traces back to the early works of Ahlfors and Beurling in the 1950s, where they obtained the following characterization. 

\begin{thm}[Ahlfors--Beurling, \cite{ahlfors1950conformal}] \thlabel{THM:A-BTHM} Let $E \subset \T$ be a compact set of positive Lebesgue measure. Then the following statements are equivalent:
\begin{enumerate}
    \item[(i)] $E$ supports no non-trivial element in $\hil^1$.
    \item[(ii)] Every function $f$ analytic in $\C \setminus E$ with 
    \[
    \int_{\C \setminus E} \abs{f'(z)}^2 dA(z) < \infty,
    \]
    is constant.
    \item[(iii)] For any arc $I\subseteq \T$, we have
    \begin{equation*}\label{EQ:ABCOND}
    \operatorname{Cap}_0(I\setminus E) = \operatorname{Cap}_0(I).
    \end{equation*}

\end{enumerate}
\end{thm}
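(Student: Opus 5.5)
We will prove the cycle (i)$\Rightarrow$(ii)$\Rightarrow$(iii)$\Rightarrow$(i). Throughout we read ``$E$ supports $f\in\hil^1$'' as meaning that $f$ vanishes almost everywhere on $\T\setminus E$. The basic link between $\hil^1$ and analytic functions with finite Dirichlet integral is the following. Write the harmonic extension of $f\in\hil^1$ as $P(f) = g + \conj{h}$, with $g,h$ analytic in $\D$. Then define $F$ on $\C\setminus E$ by
\[
F := g - h^{*} \ \text{in } \D, \qquad F := \text{the reflection } z\mapsto \conj{h(1/\conj z)} - g^{*} \ \text{outside } \cD ,
\]
with $^*$ denoting suitable reflections. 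The jump of $F$ across $\T$ is then (a multiple of) $f$, and the Dirichlet integral of $F$ is comparable to $\norm{f}^2_{\hil^1}$. More concretely, $F$ is the Cauchy integral $\int_E f(\zeta)\,\frac{d\zeta}{\zeta - z}$, and its area Dirichlet integral over $\C\setminus\T$ equals a multiple of $\sum_n |n||\widehat f(n)|^2$.

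\textbf{(ii)$\Rightarrow$(i).} Suppose $0\neq f\in\hil^1$ is supported on $E$. Then the Cauchy integral $F$ above is analytic in $\C\setminus E$: it continues across $\T\setminus E$ because the jump vanishes there, by the Sokhotski--Plemelj formula, with boundary values interpreted in the $\hil^{1/2}$ trace sense and Morera/Painlevé giving the continuation. It has finite Dirichlet integral, and it is nonconstant since $f\neq0$. This proves (ii)$\Rightarrow$(i) in contrapositive form.

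\textbf{(i)$\Rightarrow$(ii).} Conversely, given a nonconstant $F$ on $\C\setminus E$ with finite Dirichlet integral, take $f$ to be its jump across $\T$, that is, the difference of the inner and outer boundary traces. These traces lie in $\hil^{1/2}$, and to get $f\in\hil^1$ we use the standard trace/Douglas-integral identity. Note that $f$ vanishes off $E$. If $f=0$, then $F$ extends analytically across $\T$ (the removability of $E$ for finite-Dirichlet analytic functions once there is no jump), hence to an entire function with finite Dirichlet integral, which must be constant.

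This argument requires care. For the jump to lie in $\hil^1$ rather than merely $\hil^{1/2}$, we should actually use the decomposition the other way round: the interior part $F|_\D$ and exterior part $F|_{\C\setminus\cD}$ each have finite Dirichlet integral, so their Taylor/Laurent coefficients $a_n$ satisfy $\sum n|a_n|^2<\infty$. The boundary function $f=F_{\rm in}-F_{\rm out}$ then has $\widehat f(n)$ built from these coefficients, which gives $f\in\hil^1$ directly with the same weight $(1+|n|)$. The weight matches exactly, which is why the identification works.

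\textbf{(i)$\Leftrightarrow$(iii).} This is the main obstacle. For (iii) failing $\Rightarrow$ not (i): suppose some arc $I$ has $\operatorname{Cap}_0(I\setminus E)<\operatorname{Cap}_0(I)$. Consider the harmonic measure / Green potential of the compact set $K=\conj{I\setminus E}$ relative to the domain $\C\setminus \conj{I}$. Concretely, take the equilibrium potentials $U_I$ and $U_K$. Their difference $u=U_K-U_I$ (suitably normalized by the capacity constants) is harmonic off $I$ and has finite Dirichlet integral. Its boundary values on $I\setminus E$ vanish quasi-everywhere, since both potentials are constant there, and $u$ is nonzero exactly because the capacities differ. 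Restricting $u$ to $\T$ and cutting off with a smooth bump equal to $1$ on $I$ yields a nontrivial $\hil^1$ function supported on $E$. Here we use that restriction of a finite-energy potential to the circle lies in $\hil^1$, i.e.\ in $\hil^{1/2}$ of $\mathbb R^2$ traces; I would double-check the indexing against the paper's convention.

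For (iii) $\Rightarrow$ (i): suppose $0\neq f\in\hil^1$ is supported on $E$. Take $\lambda>0$ and an arc $I$ where $\{|f|>\lambda\}\cap I$ has positive measure. We then use that the quasi-continuous representative of $f$ vanishes q.e. on $I\setminus E$, i.e.\ off $E$. The Dirichlet principle then shows that the condenser energy of $I\setminus E$ is strictly smaller than that of $I$: the function $f$ perturbs the extremal potential of $I$ while keeping its values on $I\setminus E$. Hence $\operatorname{Cap}_0(I\setminus E)<\operatorname{Cap}_0(I)$. The delicate point is strictness, and I would obtain it via uniqueness of the equilibrium measure: equality of capacities forces the equilibrium measures of $I$ and $\conj{I\setminus E}$ to coincide. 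However, the equilibrium measure of the arc $I$ has full support with positive density on $I\supset E\cap I$, which has positive Lebesgue measure, contradicting the fact that it lives on $\conj{I\setminus E}$ unless $E\cap I$ is negligible. Choosing $I$ so that $\mathrm{int}$ issues don't arise, which requires $\conj{I\setminus E}\neq I$, is where I expect the real technical work to lie.
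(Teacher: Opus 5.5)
Your argument for (i)$\Leftrightarrow$(ii) is essentially the paper's. The Cauchy integral of $f\,dm$ is analytic off $E$ and has finite Dirichlet integral on both sides of $\T$. Conversely, the jump of $F$, read off from its Taylor and Laurent coefficients, lies in $\hil^1$, vanishes off $E$, and is zero only if $F$ is constant. No Plemelj or trace theory is needed.

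The genuine gap is in (i)$\Leftrightarrow$(iii), which the paper obtains from \thref{THM:SUPPDIRICHLET} with $\Omega(n)=1+n$. Your strictness argument for (iii)$\Rightarrow$(i) never uses $f$. If it were valid, $\abs{E\cap I}>0$ alone would force $\operatorname{Cap}_0(I\setminus E)<\operatorname{Cap}_0(I)$. Then (iii) would fail for every set of positive measure, and by the theorem every such set would support a non-trivial element of $\hil^1$, contradicting \thref{THM:KHRUSHUNIBI}.

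The error is that $\operatorname{Cap}_0(I\setminus E)$ is the inner capacity of a non-closed set, not the capacity of $\overline{I\setminus E}$. When $E$ has empty interior (the only non-trivial case), $\overline{I\setminus E}=I$ for every closed arc. So $\mu_I$ ``living on $\overline{I\setminus E}$'' is no contradiction, and no choice of $I$ changes this.

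What is missing is an arc that actually detects $f$. As in \thref{LEM:EQUILIBDENSE}, the rotates of the $\mu_{I_r}$ span a dense subspace of the energy space, because $\widehat{\mu_{I_r}}(n)\neq 0$ for $r\abs{n}<\pi$. Hence $\langle f,\mu_I\rangle\neq 0$ for some arc $I$. On the other hand, (iii) together with $\norm{\mu_I-\nu}^2\leq\norm{\nu}^2-\norm{\mu_I}^2$ (for probability measures $\nu$ on $I$) makes $\mu_I$ an energy limit of mollified probability measures supported in $\T\setminus E$. Each of these pairs to zero with $f$, which is the contradiction. Once such an arc is chosen and $f$ is taken real, your Dirichlet-principle perturbation $\operatorname{Cap}_0(I)U_{\mu_I}+tf$ would also give strictness, since its linear term in $t$ is a multiple of $\langle f,\mu_I\rangle$. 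For a general arc that term can vanish.

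Your direction ``(iii) fails $\Rightarrow$ (i) fails'' has the same defect. With $K=\overline{I\setminus E}$ you get $K=I$ and $u\equiv 0$ whenever $E$ is nowhere dense. Moreover, a cutoff equal to $1$ on $I$ leaves $u$ unchanged on $\supp{\chi}\setminus I$, where $u$ need not vanish, so $\chi u$ is not supported on $E$. A constructive repair needs three things:
\begin{enumerate}
    \item[(a)] the inner equilibrium measure of $\mathrm{int}\,I\setminus E$, i.e. the energy limit of equilibrium measures of compacta $K_j\uparrow \mathrm{int}\,I\setminus E$;
    \item[(b)] a cutoff supported inside $I$;
    \item[(c)] a separate proof that $u\not\equiv 0$ on $E\cap I$.
\end{enumerate}
The paper avoids all of this with Hahn--Banach. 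By \thref{LEM:SUPPDense}, (i) is equivalent to density of $C_0^\infty(\T\setminus E)$ in the energy space. By \thref{LEM:APPROXBELOW}, density allows each $\mu_I$ to be approximated by smooth probability densities supported in $I\setminus E$, which is (iii).
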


The property in $(ii)$ is related to the concept of removable singularities for analytic functions with finite Dirichlet integral in the complex plane, which may be interpreted as Riemann Removability Theorem specific to the class of analytic functions with finite planar Dirichlet integral. Admittedly, Ahlfors and Beurling studied the question of removability of general sets in $\C$, not necessarily confined inside $\T$, and their result contained a characterization involving the notion of extremal lengths, which could be recaptured as the capacity in the setting of $\T$. Extensions of these ideas and generalizations to non-linear potential theoretical settings were later carried out by L. Hedberg in \cite{hedberg1974removable}. 

Notably, it is intriguing that the notion of log-capacity, somewhat of an intrinsic device that captures the exceptional sets in the $\hil^{-1}$, again turns out to be the right apparatus for describing the exceptional sets in the dual space $\hil^1$. 

As seen from the Khrushchev--Peller Theorem, the problem of the Cauchy integral of a measure belonging to $\hil^{-1}$ is equivalent to the measure itself belonging there. Surprisingly, it turns out that in the dual setting of $\hil^{-1}$, this symmetry rigidity fails even at the level of support sets, further highlighting a stark discrepancy between the unilateral and the bilateral uniqueness problem in $\hil^1$.

\begin{thm}[Khrushchev, \cite{khrushchev1978problem}] \thlabel{THM:KHRUSHUNIBI} There exists a compact set $E\subset \T$ of positive Lebesgue measure, which supports a complex finite Borel measure $\mu$ satisfying the unilateral estimate
\[
\sum_{n>0} \abs{\widehat{\mu}(n)}^2 (1+n) < \infty,
\]
but $E$ supports no non-trivial function $f \in \hil^1$. 
\end{thm}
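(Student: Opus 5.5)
Via \thref{THM:A-BTHM}, it suffices to build a compact set $E\subset\T$ of positive Lebesgue measure with two properties. First, $\operatorname{Cap}_0(I\setminus E)=\operatorname{Cap}_0(I)$ for every arc $I$, which gives the uniqueness property for $\hil^1$. Second, $E$ carries a measure $\mu$ whose analytic part $\Ka(\mu)$ lies in the Dirichlet space $\sum_{n>0}|\widehat{\mu}(n)|^2(1+n)<\infty$.

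\emph{Choice of measure.} I look for $\mu=h\,dm$ with $h\in L^\infty(E)$, not identically zero. The unilateral condition says that the Riesz projection $P_+h$ belongs to the analytic Dirichlet space $\mathcal{D}$. Equivalently, there is some $g\in\mathcal{D}$ such that $h-g\in\conj{H^2_0}$, i.e. $h-g$ has only negative frequencies. So I want $g\in\mathcal{D}$ whose boundary values agree on $\T\setminus E$ with an anti-analytic function. A natural candidate is $g=F$, where $F$ is bounded analytic in $\C\setminus E$ with a finite Dirichlet integral near $E$ and vanishing at $\infty$, together with the jump across $E$. Write $F_{\mathrm{in}}$ and $F_{\mathrm{out}}$ for the restrictions of $F$ to $\D$ and to $\C\setminus\conj{\D}$. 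Cauchy's formula then gives
\[
h=F_{\mathrm{in}}-F_{\mathrm{out}} \quad \text{on } \T,
\]
which vanishes off $E$. Moreover $P_+h=F_{\mathrm{in}}$ and $P_-h=-F_{\mathrm{out}}$. By \thref{THM:A-BTHM}(ii), however, no such non-constant $F$ can have a finite Dirichlet integral on all of $\C\setminus E$. So I only require $F_{\mathrm{in}}\in\mathcal{D}$, and allow $F_{\mathrm{out}}$ to have an infinite Dirichlet integral on the exterior disc. This asymmetry is exactly what the theorem asserts.

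\emph{Construction of $E$.} I take $E=\T\setminus\bigcup_k J_k$, a fat Cantor-type set whose complementary arcs $J_k$ are positioned to make $E$ capacitarily thin in every arc. For instance, in each dyadic arc at generation $j$, remove many tiny arcs spread uniformly, with total length $\epsilon_j$, $\sum_j\epsilon_j<1$, but with the number of arcs growing so fast that the removed set has full log-capacity in the limit. Their union is dense and equidistributed, and the logarithmic capacity of a union of $N$ equally spaced arcs of length $\delta$ tends to that of the whole arc when $N\gg\log(1/\delta)$ holds in the appropriate sense. This gives (iii) of \thref{THM:A-BTHM}.

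\emph{Construction of $F_{\mathrm{in}}$.} I need $g=F_{\mathrm{in}}\in\mathcal{D}$ whose boundary values on each $J_k$ coincide with those of a single $\conj{H^2_0}$ function $q$ (namely $-F_{\mathrm{out}}$). I would choose $q$ first as a lacunary anti-analytic series that is highly oscillating, with $\|q\|_{H^2}$ finite but a large Dirichlet norm. Then I would solve the weighted interpolation problem: find $g\in\mathcal{D}$ with $g=q$ on $\bigcup J_k$. By duality, such $g$ exists iff $|\langle q,\nu\rangle|\lesssim\|\nu\|_{\mathcal{D}^*}$ for distributions $\nu$ supported on $\conj{\bigcup J_k}=\T$, which does not help directly. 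So I would instead build $g$ and $q$ simultaneously by an iterative scheme. At stage $j$, correct on the new arcs, using that the arcs at stage $j$ are so short that a smooth bump adjusted on them costs little in $\mathcal{D}$ relative to its cost in $\conj{H^2}$-mismatch, and summing a convergent series of $\mathcal{D}$-norms. The nontriviality of $h=g-q$ on $E$ is secured by keeping the stage-zero term dominant in $L^2$.

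\emph{Main obstacle.} The hardest step is the compatibility between the two requirements. $E$ must be thin enough for (iii), yet its complement must not be so large that $g\in\mathcal{D}$ together with $g-q$ vanishing off $E$ forces $g$ and $q$ to vanish. This is where the unilateral/bilateral discrepancy must be exploited quantitatively. I would try to balance the arc sizes against the Dirichlet cost of localized corrections, which is roughly $\log$ of the inverse length, while the capacity gain needs many arcs. I expect the harmonic-measure estimates for the exterior region to be the delicate point.
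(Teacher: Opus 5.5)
Your reduction through the Ahlfors--Beurling theorem is legitimate, and the set you describe (many equidistributed tiny arcs removed at each stage) is the same kind of set the paper uses: $E=\bigcap_j \T\setminus U_{\delta_j,N_j}$, where $U_{\delta,N}$ is a union of $N$ equally spaced arcs of total length $\delta$. Your capacity heuristic is also correct. Via $z\mapsto z^N$, such a union has logarithmic capacity tending to that of $\T$ once $\log(1/\delta)/N\to 0$. Two remarks on this half. First, condition (iii) of \thref{THM:A-BTHM} is required for every arc $I$, not only for $\T$, so you would need a local version, or the global-to-local equivalence in \thref{THM:SUPPDIRICHLET}. Second, the paper avoids capacities altogether. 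It shows that smooth probability densities $\phi_j$ supported in $U_{\delta_j,N_j}$ satisfy $\norm{\phi_j-1}^2_{\hil^{-1}}\lesssim 1/(N_j\delta_j)\to 0$. Hence $C_0^\infty(\T\setminus E)$ is dense in $\hil^{-1}$, and by duality (\thref{LEM:SUPPDense}) $E$ carries no nonzero $\hil^1$ function.

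The genuine gap is the other half: you never produce the measure. The iterative correction scheme is not an argument, and as described it cannot work. Corrections built from smooth bumps have no built-in spectral asymmetry; for a real bump $b$ one even has $\abs{\widehat b(n)}=\abs{\widehat b(-n)}$. So if the analytic parts of your corrections had summable $\mathcal{D}$-norms and the anti-analytic parts were of comparable size, the limit $h$ would be a nonzero element of $\hil^1$ supported on $E$. That is exactly what your first half forbids. You therefore need a genuinely one-sided mechanism that makes $P_+h$ regular while $P_-h$ stays rough, and the proposal supplies none. The claim that the stage-zero term stays dominant in $L^2$ is also unjustified.

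The missing idea is the Beurling--Carleson entropy. The paper chooses the parameters so that $\sum_k\abs{J_k}\log(1/\abs{J_k})\le\sum_j\delta_j\log(N_j/\delta_j)<\infty$ and $\sum_j\delta_j<1$, for instance $\log N_j/N_j\le 4^{-j}$ and $\delta_j=1/(3^j\lceil\log N_j\rceil)$. This is compatible with thinness because the entropy grows only like $\delta_j\log N_j$, while the $\hil^{-1}$-error decays like $1/(N_j\delta_j)$. Khrushchev's theorem on Beurling--Carleson sets of positive measure then gives a nonzero $f\in L^2(E)$ whose Cauchy integral $\Ka(f)$ is $C^\infty$ on $\T$. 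Thus $\widehat f(n)=\mathcal{O}(n^{-M})$ as $n\to+\infty$ for every $M$, and the unilateral estimate is immediate.

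Your phrase ``the number of arcs growing so fast'' must be tied to this entropy bound. Without it, nothing in your construction guarantees that $E$ carries a measure of the required kind. The ``main obstacle'' you flag is precisely what this bookkeeping resolves.
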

In other words, unlike in the case of $\hil^{-1}$ illustrated by the Khrushchev-Peller Theorem stated above, there is an asymmetry phenomenon on support sets for unilateral and bilateral average Fourier decay in $\hil^1$. Khrushchev's proof is primarily based on his diligent work on uniqueness sets for Cauchy transforms, which he characterized in terms of entropy. He then proceeds to construct a compact set $E$ of positive Lebesgue measure, which violates the capacity condition $(iii)$ in the Ahlfors--Beurling Theorem, yet has finite Beurling--Carleson entropy:
\[
\sum_I \abs{I} \log \frac{1}{|I|} < \infty,
\]
where $\{I\}$ are the connected components of $\T \setminus E$. As such, it relies on deep potential theory, and  N. G. Makarov in \cite{makarov1991class} provided some slightly simplifying perspectives to the matter. On a related note, a strong discrepancy between the unilateral and the bilateral Fourier uniqueness problem in the setting for $\ell^p$, was recently shown in \cite{limani2026asymmetric}.

\subsection{Uniqueness problems in weighted $\ell^2$-spaces}

Our purpose and aims in this paper are two-fold. First, we aim to develop a general potential theoretical framework in order to extend several of the aforementioned classical results to general weighted $\ell^2(\omega)$-spaces, for a wide range of sequences $\omega= \{\omega(n)\}_n$. Secondly, we show that all these problems can, in fact, be unified under the common umbrella of \emph{simultaneous approximation}, which captures phenomena of the following kind:
\\ \\
\emph{There exists a family of functions $\{\phi_j\}_j$ (analytic/trigonometric polynomials, smooth functions, etc) which converge to $1$ in $\ell^2(\omega)$, but $\phi_j$ (and perhaps also its derivatives) tend to $0$ uniformly on an exceptional set $E$.
}
\\ \\
This work is in part inspired from S. Khrushchev in \cite{khrushchev1978problem}, and also from L. Hedberg in \cite{hedberg1974removable}.
\\
We now disclose the details of the setting we consider. For us, a weight $W$ will be a non-negative continuously differentiable function on $(0,1]$, which is integrable on $[0,1]$ with respect to $dx$. Given a weight $W$, we shall throughout these notes denote its associated moment sequence by
\[
\omega(n) := \int_0^1 r^n W(1-r) dr, \qquad n=0,1,2,\dots
\]
Note that the integrability of $W$ and the dominated convergence theorem ensure that 
\[
\omega(n) \downarrow 0, \qquad n\to \infty.
\]
Consider the Hilbert space $h^2(W)$ of harmonic functions $u$ in the unit-disc $\D$ normed by
\[
\|u\|^2_{h^2(W)} := \int_{\D} \abs{u(z)}^2 W(1-|z|^2) dA(z)< \infty.
\]
It is easy to see that the dilation $u_r(z):=u(rz)$ converge to $u$ in $h^2(W)$ as$r \to 1-$. We may therefore define the Fourier coefficients of $u$ via 
\[
\widehat{u}(n) := \lim_{r\to 1-} \widehat{u_r}(n) = \lim_{r\to 1-} \int_{\T} u(r\zeta) \zeta^{-n} dm(\zeta), \qquad n\in \mathbb{Z}, 
\]
where each $u_r$ is a harmonic function in a neighborhood of the closed disc $\cD$. With this at hand, we can now re-identify $h^2(W)$ as isometrically isomorphic with the Hilbert space $\ell^2(\omega)$ of two-sided infinite sequences of Fourier coefficients satisfying
\[
\norm{u}_{\ell^2(\omega)}^2 = \sum_{n\in \mathbb{Z}} \abs{\widehat{u}(n)}^2 \omega(|n|) = \int_{\D} \abs{u(z)}^2 W(1-|z|^2) dA(z).
\]
This connection has the advantage that we may freely identify elements in $\ell^2(\omega)$ with those in $h^2(W)$, and vice versa, which captures an environment where potential theory meets harmonic analysis. Now in order to give rise to fruitful theory, we shall need to impose some natural conditions on our weights $W$, which translate to conditions on the associated weight sequences $\omega$. Our conditions are:

\begin{enumerate}
    \item[]{$(C_1)$} $W$ is either monotone non-decreasing or monotone non-increasing,
    \item[]{$(C_2)$} the Dini-condition for $W$ fails, i.e., we have
    \[
    \int_0^1 \frac{W(t)}{t}dt =+ \infty.
    \]
    \item[]{$(C_3)$} $W$ satisfies the doubling condition:
    \begin{equation*}
    W(t)\leq c\, W(t/2), \qquad 0<t<1,
\end{equation*}
for some $0<c<2$ depending only on $W$.
\end{enumerate}

The condition $(C_1)$ is a relatively mild condition on $W$, and is satisfied by essentially all concrete weights of considerable interest. Technically, the monotonicity assumption is only relevant in a neighborhood of the origin, but for lack of significance, and simplicity, we shall assume monotonicity on all of $(0,1]$.

The condition $(C_2)$ is imposed in order to avoid redundancies in building the potential theoretical framework for exceptional sets in $\ell^2(\omega)$. Indeed, it is easily seen to be equivalent to the $\ell^1$-divergence of the moments:
\[
\sum_{n\geq 0} \omega(n) =+ \infty,
\]
which simply excludes that $\ell^2(\omega)$ contains atomic measures, making finite sets of points have zero capacity.

Let $\Omega$ denote the reciprocal weight: 
\[
\Omega(n):= \frac{1}{\omega(n)} \qquad n=0,1,2,\dots
\] Then the dual space of $\ell^2(\omega)$ can be identified with $\ell^2(\Omega)$ (see Section~\ref{SEC:CauchyDualitySec}). If the integral in $(C_2)$ converges, then Cauchy-Schwarz inequality implies that $\ell^2(\Omega)$ consists of continuous functions on $\T$. This also makes the notion of supporting sets in the dual space $\ell^2(\Omega)$ rather redundant. \\

The condition $(C_3)$ is a mild regularity condition. It enables some flexibility when moving between scales at a controlled fashion, but importantly prohibits $\omega$ from decaying too fast. Indeed, together with $(C_1)$ we get that $W(1/n)\gtrsim \frac{1}{n^{\alpha}}$ for some $0<\alpha<1$, which at its turn leads to the following moment decay of the associated moment sequence 
\[
\omega(n) \gtrsim \frac{1}{n^{1+\alpha}}, \qquad n=1,2,\dots
\]
However, such rapid decay is already excluded by the condition $(C_2)$, hence it is intrinsically the synergy between $(C_1)-(C_3)$ that carries the theory around. In fact, the principal role of the conditions $(C_1)-(C_3)$ is that we are able to obtain a Douglas representation formula for the dual space $\ell^2(\Omega)$, see \thref{THM:DOUGLASFORMULA} below. 

Below, we have summarized a range of weights covered by our framework. It can be seen that the associated moment sequences $\omega$ may essentially decay as slowly as possible, and on the other end be as close as possible to $\ell^1$-summability. The precise relationship between $W$ and $\omega$ will be clarified in Section~$3$. 

\[
\renewcommand{\arraystretch}{2.5}
\begin{array}{c|c|c|c}
\boldsymbol{W(t)}
&
\boldsymbol{\omega(n)}
\\
\hline
\displaystyle
W(t)\asymp
\frac{1}{
t\log(e^{e}/t)
\bigl(\log\log(e^{e}/t)\bigr)^2
}
&
\displaystyle
\omega(n)\asymp
\frac{1}{\log\log n}
\\
\hline
\displaystyle
W(t)\asymp
\frac{1}{t\bigl(\log(e/t)\bigr)^2}
&
\displaystyle
\omega(n)\asymp
\frac{1}{\log n}
\\
\hline
\displaystyle
W(t)\asymp t^{\alpha-1},
\quad 0 < \alpha <1
&
\displaystyle
\omega(n)\asymp n^{-\alpha}
\\
\hline
\displaystyle
W(t)\asymp
\frac{1}{\log(e/t)}
&
\displaystyle
\omega(n)\asymp
\frac{1}{n\log n}

\end{array}
\]

Our main results are gathered in the next section, and are divided into subsections featuring their content.

\section{Main results}

\subsection{Small exceptional sets}
We start out with gathering our results related to the problem of uniqueness sets in $\ell^2(\omega)$. First out is the following vast generalization of Beurling's \thref{THM:BEURLING} to the setting of $\ell^2(\omega)$.

\begin{thm}\thlabel{THM:Distrib} Let $W$ be a weight satisfying the hypothesis $(C_1)-(C_3)$, and $\{\omega(n)\}_{n\geq 0}$ be its associated moment sequence. If a compact set $E\subset \T$ supports a non-trivial distribution $S$ with
\[
\sum_{n\in \mathbb{Z}} \abs{\widehat{S}(n)}^2 \omega(|n|) < \infty,
\]
then $E$ also supports a probability measure $\mu$ with $\{\widehat{\mu}(n)\}_{n\in \mathbb{Z}} \in \ell^2(\omega)$.
\end{thm}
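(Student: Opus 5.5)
We argue by contraposition, through duality and simultaneous approximation. Suppose $E$ supports no probability measure $\mu$ with $\widehat{\mu}\in\ell^2(\omega)$; in potential-theoretic language, $E$ has zero $\omega$-capacity, where the energy of a positive measure is
\[
I_\omega(\mu):=\sum_{n\in\mathbb{Z}}\abs{\widehat{\mu}(n)}^2\omega(|n|)\asymp \int_{\D} \abs{P(\mu)(z)}^2 W(1-|z|^2)\,dA(z).
\]
We want to show that every distribution $S\in\ell^2(\omega)$ with $\supp{S}\subseteq E$ vanishes. The dual of $\ell^2(\omega)$ is $\ell^2(\Omega)$ under the pairing $\langle S,f\rangle=\sum_n \widehat{S}(n)\conj{\widehat{f}(n)}$. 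It therefore suffices to construct smooth functions $\phi_j$ on $\T$ with the following two properties.
\begin{enumerate}
\item[(a)] $\phi_j\to 1$ weakly in $\ell^2(\Omega)$; it is enough to have $\norm{1-\phi_j}_{\ell^2(\Omega)}$ bounded together with $\widehat{\phi_j}(n)\to\delta_{n,0}$ for each $n$.
\item[(b)] Each $\phi_j$ vanishes on a neighbourhood of $E$.
\end{enumerate}
Granting this, we apply it to the trigonometric-monomial multiples of $\phi_j$, or equivalently to $\phi_j\,e^{-ik\cdot}$; we will need $\ell^2(\Omega)$ to be stable under multiplication by trigonometric polynomials, which is immediate from $(C_3)$. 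By (b), $\langle S,\phi_j e^{-ik\cdot}\rangle=0$, since $S$ is supported on $E$ and $\phi_j$ is smooth and vanishes near $E$. By (a), letting $j\to\infty$ gives $\widehat{S}(k)=0$ for every $k$, hence $S=0$.

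To construct the $\phi_j$, we take $\epsilon$-neighbourhoods $E_\epsilon$ of $E$, which are finite unions of arcs. Zero capacity is inherited as $\operatorname{Cap}_\omega(E_\epsilon)\to 0$, by the usual weak-$*$ compactness and lower semicontinuity of energy. For each $\epsilon$ we take the equilibrium measure $\mu_\epsilon$ of $E_\epsilon$ and its potential $U_\epsilon$. This potential is the element of $\ell^2(\Omega)$ with $\widehat{U_\epsilon}(n)=\omega(|n|)\widehat{\mu_\epsilon}(n)$, so that
\[
\norm{U_\epsilon}^2_{\ell^2(\Omega)}=I_\omega(\mu_\epsilon)=\operatorname{Cap}_\omega(E_\epsilon)^{-1}\cdot(\text{normalization}).
\]
Normalizing, we set $V_\epsilon:=\operatorname{Cap}_\omega(E_\epsilon)\,U_\epsilon$. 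Then $V_\epsilon\geq 1$ quasi-everywhere on $E_\epsilon$, and $\norm{V_\epsilon}^2_{\ell^2(\Omega)}=\operatorname{Cap}_\omega(E_\epsilon)\to 0$. We then put $\phi:=1-\min(V_\epsilon,1)$, mollified and slightly adjusted so that it vanishes on a neighbourhood of $E$. Because $\norm{V_\epsilon}\to 0$, this $\phi$ tends to $1$ in norm, provided truncation is bounded in $\ell^2(\Omega)$.

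The main obstacle is precisely this contraction property: that $\norm{\min(V,1)}_{\ell^2(\Omega)}\lesssim\norm{V}_{\ell^2(\Omega)}$, and that mollification preserves the bounds. Here we would invoke the Douglas-type representation of $\ell^2(\Omega)$ announced as \thref{THM:DOUGLASFORMULA}, which rests on $(C_1)$–$(C_3)$. It expresses the norm as
\[
\norm{f}^2_{\ell^2(\Omega)}\asymp\abs{\widehat f(0)}^2+\int_\T\int_\T\abs{f(\zeta)-f(\xi)}^2K(\abs{\zeta-\xi})\,dm(\zeta)\,dm(\xi)
\]
for a positive kernel $K$. From this, normal contractions such as $t\mapsto\min(t,1)$, and convolution with approximate identities, visibly do not increase the seminorm; that is, the form is a Dirichlet form. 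A secondary point is that the potential $U_\epsilon$ satisfies $U_\epsilon\geq 1$ only quasi-everywhere and may fail to be continuous. We handle this by the standard Frostman maximum principle for positive-definite kernels with positive Fourier transform $\omega$. Alternatively, we can bypass the equilibrium measure entirely: we minimize the Dirichlet form over smooth $f\geq 1$ on $E_\epsilon$, which defines the dual capacity. We then show, by the minimax/Hilbert-space duality between $\inf\{\norm{f}^2_{\ell^2(\Omega)}: f\geq 1 \text{ on } E_\epsilon\}$ and $\sup\{\mu(E_\epsilon)^2/I_\omega(\mu)\}$, that the infimum tends to zero exactly when no finite-energy probability measure lives on $E$.
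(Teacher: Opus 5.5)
Your proposal is correct and follows the paper's argument essentially step by step. The paper likewise shows, via \thref{PROP:SAdistGENERAL}, that $1$ is a norm limit in $\ell^2(\Omega)$ of smooth functions vanishing near $E$, built as mollifications of $1-\min(1,\operatorname{Cap}_\omega(E_\varepsilon)U_\nu)$ with the truncation controlled by the Douglas formula, and then pairs with $S$, shifting $S$ by $\zeta^k$ rather than shifting the test functions. The only thing to tidy is your ``secondary'' point, which needs no maximum principle: sets of zero $\omega$-capacity are Lebesgue-null, so $1-\min(V_\varepsilon,1)=0$ a.e.\ on $E_\varepsilon$, and a mollification at scale below $\operatorname{dist}(E,\T\setminus E_\varepsilon)$ already vanishes identically near $E$.
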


Notably, our result confirms the idea that the Piatetski-Shapiro phenomenon never occurs in weighted $\ell^2$-spaces. This is a generalization of results in \cite{kozma2013singular} and \cite{lev2011wiener} for standard weights $\omega(n) = n^{-\alpha}$, $\alpha \in (0,1]$. As seen from the diagram of concrete weights satisfying $(C_1)-(C_3)$ the statement holds for a wide range of moment sequences $\{\omega(n)\}_n \in c_0 \setminus \ell^1$.

Moving forward, we next illustrate that the uniqueness problem in $\ell^2(\omega)$ exhibits a rather neat symmetry. In particular, it shows that the Khrushchev--Peller Theorem at least remains true at the level of supporting sets.

\begin{thm}\thlabel{THM:UNILAT-BILAT} Let $W$ be a weight satisfying the hypothesis $(C_1)-(C_3)$, and $\{\omega(n)\}_{n\geq 0}$ be its associated moment sequence. If a compact set $E\subset \T$ supports a non-trivial measure $\mu$ whose positive frequencies satisfy
\begin{equation} \label{EQ:OneSided}
\sum_{n > 0} \, \abs{\widehat{\mu}(n)}^2 \omega(n) < \infty,
\end{equation}
then $E$ also supports a probability measure $\mu$ with $\{\widehat{\mu}(n)\}_{n\in \mathbb{Z}} \in \ell^2(\omega)$.
\end{thm}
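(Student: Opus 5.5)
We argue by duality: we show that if $E$ supports no probability measure in $\ell^2(\omega)$, then no non-trivial measure on $E$ can satisfy the one-sided condition \eqref{EQ:OneSided}. Let $\operatorname{Cap}_\omega$ be the capacity attached to $\ell^2(\omega)$; it is defined through the energy $\sum_n |\widehat{\mu}(n)|^2\omega(|n|)$ of positive measures, which for real $\mu$ is comparable to $\iint K_\omega(\zeta\conj{\xi})\,d\mu\,d\mu$ with a positive, radially decreasing kernel $K_\omega$ built from $W$. Assume $\operatorname{Cap}_\omega(E)=0$. The goal is the following \emph{simultaneous approximation} statement. There are \emph{analytic} polynomials $p_j$ with $p_j(0)=1$ such that
\[
\norm{p_j}_{\ell^2(\Omega)} \to 0 \quad\text{and}\quad p_j \to 0 \text{ boundedly on } E.
\]
Granting this, take any $\mu\in M(E)$ satisfying \eqref{EQ:OneSided} and any fixed monomial $\zeta^{-k}$ with $k\geq 0$. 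Pair $\mu$ with the anti-analytic polynomials $q_j = \zeta^{-k}\,\conj{p_j}$ (equivalently, use $\conj{\mu}$ and $\zeta^k p_j$). The pairing $\int q_j\,d\mu$ only involves Fourier coefficients $\widehat{\mu}(n)$ with $n\geq k$. By Cauchy--Schwarz in the $\ell^2(\omega)$--$\ell^2(\Omega)$ duality of Section~\ref{SEC:CauchyDualitySec}, together with the shift-invariance up to constants of $\Omega$ (which follows from doubling $(C_3)$), this pairing tends to $0$. Bounded convergence on $E$, however, gives $\int q_j\,d\mu\to 0$ directly, so it carries no information by itself. The useful version is the correction $1-p_j$, which tends to $1$ on $E$ and to $0$ in norm in the sense of a Hardy-type estimate. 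We therefore pair $\zeta^{-k}(1-\conj{p_j})$ with $\mu$. Its integral converges to $\widehat{\mu}(k)$ by bounded convergence, and it also converges to $0$ because the analytic part of $\zeta^{-k}(1-\conj{p_j})$ is controlled in $\ell^2(\Omega)$. The justification is that $1-p_j$ has vanishing constant term, so it only tests frequencies $n>k$, where \eqref{EQ:OneSided} and the norm bound apply. Hence $\widehat{\mu}(k)=0$ for all $k\geq 0$. Running the same argument with suitable shifts also kills the negative coefficients, or alternatively we use that a measure on a set of zero Lebesgue measure (which $\operatorname{Cap}_\omega(E)=0$ forces) with vanishing nonnegative frequencies is zero by the F.~and M.~Riesz theorem. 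So $\mu=0$.

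The approximation scheme itself comes from potential theory. Since $\operatorname{Cap}_\omega(E)=0$, there exist smooth functions $h_j\geq 0$ with $h_j\geq 1$ on a neighbourhood of $E$ and $\norm{h_j}_{\ell^2(\Omega)}\to 0$. These are obtained from the Douglas-formula description of $\ell^2(\Omega)$ in \thref{THM:DOUGLASFORMULA}, by truncating equilibrium-type potentials of small capacity neighbourhoods and applying a contraction. We then form the outer functions
\[
p_j = \exp\big(-N(h_j + i\widetilde{h_j})\big),
\]
or a truncated version such as $\exp(-(h_j+i\widetilde{h_j}))$ adjusted to satisfy $p_j(0)\to 1$. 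These are analytic, have modulus at most $1$ on $\T$, are small on $E$, and are close to $1$ in $\ell^2(\Omega)$ because $\widehat{h_j}(0)\to 0$. The final step is to approximate them by polynomials via dilation and Fejér means.

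The main obstacle is the norm estimate $\norm{1-p_j}_{\ell^2(\Omega)}\to 0$. This requires that the Douglas-type formula for $\ell^2(\Omega)$ is stable under composition with the Lipschitz map $x\mapsto e^{-x}$ (acting on $h_j$, which is fine) and, crucially, under the harmonic conjugation $h\mapsto \widetilde{h}$. Conjugation is an isometry on $\ell^2(\Omega)$ at the Fourier level, so the difficulty is only to combine the two: we need to control $e^{-(h+i\widetilde h)}$ through the Douglas integral of $h+i\widetilde h$, using that $|e^{-a}-e^{-b}|\leq |a-b|$ for $\Re a,\Re b\geq 0$. Conditions $(C_1)$--$(C_3)$ enter exactly here, through \thref{THM:DOUGLASFORMULA}.
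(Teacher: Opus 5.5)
Your argument is sound once its opening claim is repaired, and it takes a somewhat more direct route than the paper.

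\emph{The repairs.} As displayed, your target is impossible: $p_j(0)=1$ forces $\norm{p_j}_{\ell^2(\Omega)}^2\geq \Omega(0)$, and the paragraph built on $q_j=\zeta^{-k}\conj{p_j}$ is a false start. What you actually construct and use is a sequence of analytic polynomials with $\norm{1-p_j}_{\ell^2(\Omega)}\to 0$ and $p_j\to 0$ uniformly on $E$, i.e.\ \thref{Thm: four equivalent properties}(iii) with $h=1$, $g=0$.

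In the construction, the factor $N=N_j$ must tend to infinity, say with $N_j\norm{h_j}_{\ell^2(\Omega)}\to 0$. The truncated, mollified potentials (e.g.\ $h_j=1-f_\varepsilon$ from the proof of \thref{PROP:SAdistGENERAL}) satisfy $0\leq h_j\leq 1$. So $\exp(-(h_j+i\widetilde{h_j}))$ alone only gives $\abs{p_j}\leq e^{-1}$ on $E$, and the bounded-convergence step would yield nothing. With $H_j=h_j+i\widetilde{h_j}$ and $N_j\to\infty$, \thref{LEM:ExpLemmaDouglas} gives $\norm{e^{-N_jH_j}-e^{-N_j\widehat{h_j}(0)}}_{\ell^2(\Omega)}\lesssim N_j\norm{h_j}_{\ell^2(\Omega)}$. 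Fej\'er means then produce the polynomials.

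\emph{What is correct as written.} The pairing of $\zeta^{-k}(1-\conj{p_j})$ with $\mu$ works, since only $\widehat{\mu}(n)$ with $n\geq k$ enter and $\omega(m)\lesssim_k\omega(m+k)$ by doubling. Your remark on shifts is also valid. For $j>0$, $\zeta^j\mu$ is still supported on $E$ and satisfies \eqref{EQ:OneSided}: only finitely many new terms appear, and $\omega$ is non-increasing. Applying the case $k=0$ to $\zeta^j\mu$ gives $\widehat{\mu}(-j)=0$. This avoids F.~and M.~Riesz and the fact that $|E|=0$.

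\emph{Comparison with the paper.} The paper builds one outer function $F=e^{-\Phi}$, where $\Phi=\sum_n\varphi_n$ is a norm-summable series of Herglotz integrals of equilibrium potentials (\thref{L:BLOWUPLEMMA}). It proves that $F$ is cyclic for the shift (\thref{L:CyclicFunctionLemma}). The bounded functional $\ell_\mu$ on $\ell^2_A(\Omega)$ then kills every $z^kF$, hence vanishes, and F.~and M.~Riesz finishes. Inside that cyclicity proof, the tails $\exp(-\sum_{n>N}\varphi_n)\to 1$ play essentially the role of your sequence $e^{-N_jH_j}$.

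Your version dispenses with the infinite series, the analytic continuation across $\T\setminus E$, and the Taylor-polynomial step showing that $fF_N$ lies in the shift-invariant subspace generated by $f$. That is all the present theorem needs. The paper's heavier construction buys a single cyclic function vanishing exactly on $E$, which it requires anyway for part (ii) of \thref{Thm: four equivalent properties}.
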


Surprisingly, this result also appears to be new even for standard weights $\omega(n)= n^{-\alpha}$ for $0<\alpha<1$. Meanwhile, as we shall see, the case $\alpha=1$ can somewhat be attributed to the recent work in \cite{beneteau2020simultaneous}.

The proof is based on developing an appropriate potential theoretical framework in the setting of $\ell^2(\omega)$ where we introduce a notion of capacity which classifies the different notions of exceptional sets. The so-called $\omega$-capacity of a compact subset $E\subset \T$ is defined as a quantity comparable to
\begin{equation}
    \label{DEF:CAPW}
    \operatorname{Cap}_{\omega}(E) \asymp \sup_{\substack{\mu \in M_+(E) \\ \mu(E)=1 }} \frac{1}{\norm{\mu}^2_{\ell^2(\omega)}}.
\end{equation}

We refer the reader to \eqref{EQ:omegaCAP} in Section~\ref{SEC:PotTheorFramework} for a precise definition. In particular, a compact set $E\subset \T$ has zero $\omega$-capacity if every probability measure $\mu$ supported in $E$ satisfies
\[
\sum_{n\in \mathbb{Z}} \abs{\widehat{\mu}(n)}^2 \omega(|n|)=+\infty.
\]

\subsection{Problems of analytic approximation and translates}

Here, we present an aspect of our result, viewed from a dual perspective related to the phenomenon of simultaneous approximation and the problem of translations.

From now and onward, we shall let $\Omega:= \{\Omega(n)\}_{n=0}^\infty$ denote the reciprocal weight of $\omega$, defined by
\[
\Omega(n) = \frac{1}{\omega(n)} \qquad n=0,1,2,\dots
\]
Note that we have $\Omega(n) \uparrow +\infty$, since $\omega(n) \downarrow 0$.  By $\ell^2(\Omega)$, we shall designate the Hilbert space of elements $f\in L^2(\T,dm)$ with
\[
\sum_{n\in \mathbb{Z}} \, \abs{\widehat{f}(n)}^2 \Omega(|n|)< \infty.
\]

We denote by $\ell_A^2(\Omega)$ the closed subspace of $\ell^2(\Omega)$ of functions whose Poisson extension to $\D$ are analytic. Phrased differently, we have
\[
\ell^2_A(\Omega):= \{f\in \ell^2(\Omega): \widehat{f}(n)=0, \quad n<0 \}.
\]
Let also $C_A(\T)$ denote the space of continuous functions $f$ in $\T$ whose Poisson extension to $\D$ is analytic.

Our next result shows that the exceptional sets in $\ell^2(\omega)$ also characterize various other phenomena related to approximation with analytic functions.

\begin{thm} \thlabel{Thm: four equivalent properties} Let $W$ be a weight satisfying the hypothesis $(C_1)-(C_3)$, and $\{\omega(n)\}_{n\geq 0}$ be its associated moment sequence. For a compact set $E\subset \T$, the following statements are all equivalent:
\begin{enumerate}
    \item[(i)] $E$ has zero $\omega$-capacity.

    \item[(ii)] There exists an outer $f\in C_A(\T) \cap \ell_A^2(\Omega)$ with $f=0$ on $E$, such that
    \[
    \{f(z)z^n: n=0,1,2,\dots \}
    \]
    has a dense linear span in $\ell^2_A(\Omega)$. Moreover, $f$ can be chosen to have an analytic continuation to a neighbourhood of every point of $\T \setminus E$.

    \item[(iii)] For any $h \in \ell_A^2(\Omega)$ and $g\in C(E)$, there exists analytic polynomials $(Q_n)_n$ such that 
    \[
    \norm{Q_n-h}_{\ell^2(\Omega)} + \sup_{\zeta \in E} \abs{Q_n(\zeta)-g(\zeta)} \to 0.
    \]

\end{enumerate}
\end{thm}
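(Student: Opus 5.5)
The plan is to establish the cycle of implications $(i)\Rightarrow(ii)\Rightarrow(iii)\Rightarrow(i)$. Two tools from the later sections will be taken as available: the Cauchy duality between $\ell^2_A(\Omega)$ and the analytic part of $\ell^2(\omega)$ (Section~\ref{SEC:CauchyDualitySec}), and the Douglas-type formula \thref{THM:DOUGLASFORMULA}. The latter represents $\norm{f}_{\ell^2(\Omega)}^2$, up to constants, as a double integral over $\T\times\T$ involving $|f(\zeta)-f(\xi)|^2$ against a kernel determined by $W$. In particular it gives a local Dirichlet-integral formula for outer functions in terms of $|f|$ on $\T$, in the spirit of the Richter--Sundberg formula.

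For $(iii)\Rightarrow(i)$ I argue by contradiction using duality. Suppose $\operatorname{Cap}_\omega(E)>0$. Then there is a probability measure $\mu$ on $E$ with $\{\widehat\mu(n)\}\in\ell^2(\omega)$, and its Cauchy integral $\Ka(\mu)$ defines a bounded functional on $\ell^2_A(\Omega)$ through $Q\mapsto\sum_{n\ge0}\widehat Q(n)\conj{\widehat\mu(n)}=\int Q\,d\conj\mu$. Apply $(iii)$ with $h=0$ and $g\equiv1$ on $E$. The resulting polynomials satisfy $\int Q_n\,d\conj\mu\to0$ because $Q_n\to0$ in $\ell^2_A(\Omega)$. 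On the other hand, $\int Q_n\,d\conj\mu\to\mu(E)=1$ because $Q_n\to1$ uniformly on $E$. This is a contradiction. Note that the argument only needs $\sum_{n>0}|\widehat\mu(n)|^2\omega(n)<\infty$, which is the link to \thref{THM:UNILAT-BILAT}.

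For $(ii)\Rightarrow(iii)$, let $f$ be as in $(ii)$ and fix $h\in\ell^2_A(\Omega)$ and $g\in C(E)$. First use Mergelyan/Runge on $E\subsetneq\T$ to find an analytic polynomial $P$ with $|P-g|<\varepsilon$ on $E$. It then suffices to approximate $h-P$ in $\ell^2(\Omega)$ by polynomials that are uniformly small on $E$. By cyclicity, $h-P$ is approximated by $fp$ with $p$ a polynomial. To turn $fp$ into a polynomial while keeping smallness on $E$, replace $fp$ by its Fejér means $\sigma_N(fp)$. These converge to $fp$ in $\ell^2(\Omega)$, since the Fejér multipliers are contractive on weighted $\ell^2$. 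They also converge uniformly on $\T$, since $fp\in C_A(\T)$, and therefore tend to $0$ uniformly on $E$ because $f=0$ there. The sum $P+\sigma_N(fp)$ does the job.

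The main step is $(i)\Rightarrow(ii)$, the construction of the outer function. Since $\operatorname{Cap}_\omega(E)=0$, a capacitary/Frostman-type argument in the framework of Section~\ref{SEC:PotTheorFramework} yields smooth functions $0\le\phi_j\le 1$ on $\T$ with $\phi_j=1$ near $E$ and $\norm{\phi_j}_{\ell^2(\Omega)}\to0$. By a summation $\sum_j \phi_j$ with $\sum_j\norm{\phi_j}<\infty$, one obtains a real $u\ge 0$ in $\ell^2(\Omega)$, smooth off $E$, with $u=+\infty$ on $E$. Set $f=\exp(-(u+i\tilde u))$. This is outer, bounded by $1$, continuous with $f=0$ on $E$, and analytic across $\T\setminus E$ if $u$ is built real-analytic off $E$, for instance via Poisson-smoothed pieces. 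Membership $f\in\ell^2_A(\Omega)$ follows from the Douglas formula together with the contraction $|e^{-a}-e^{-b}|\le|a-b|$ for the moduli plus a control of the arguments; this is where $(C_1)$–$(C_3)$ enter.

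For cyclicity, I consider the functions $F_k=\exp(-\min(u,k)-i\,\widetilde{\min(u,k)})$. By Douglas-formula estimates these are bounded in $\ell^2_A(\Omega)$, and they are invertible multiples of outer functions that are bounded below. The quotients $f/F_k$ tend to $1$ weakly in $\ell^2_A(\Omega)$, which places $1$ in the closed invariant subspace $[f]$. Then $[f]=\ell^2_A(\Omega)$, since polynomials are dense. I expect the hardest part to be making the Douglas-formula bounds uniform for the truncations, namely controlling the conjugate-function part. I would try first to reduce to a Richter--Sundberg-type local formula, in which the $\ell^2(\Omega)$-norm of an outer function is dominated by a quantity depending only on $\log|f|$.
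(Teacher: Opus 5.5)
Your implications $(iii)\Rightarrow(i)$ and $(ii)\Rightarrow(iii)$ are correct and essentially the paper's. The paper takes $h=1$, $g=0$ in the first and treats $g=0$ and $h=0$ separately in the second, which is cosmetic.

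\textbf{The gap is in the cyclicity half of $(i)\Rightarrow(ii)$.} From $f/F_k\to1$ weakly you may conclude $1\in[f]$ only if $f/F_k\in[f]$, and your sketch never shows this. The function $1/F_k$ is bounded, but a bounded element of $\ell^2_A(\Omega)$ need not be a multiplier. The remark about ``invertible multiples of outer functions bounded below'' does not show that multiplying $f$ by $1/F_k$ keeps you inside the closed span of $\{z^nf\}$.

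This membership is the real crux. The paper makes it trivial: its $F_N=\exp(\sum_{n\le N}\varphi_n)$ is analytic on a disc larger than $\cD$, so its Taylor polynomials converge in $C^1(\T)$, and the Douglas formula gives $fp_{N,k}\to fF_N$. In your setting it can be supplied as follows:
\begin{itemize}
\item $1/F_k=\exp(\min(u,k)+i\widetilde{\min(u,k)})$ has modulus at most $e^k$. Since $\exp$ is $e^k$-Lipschitz on $\{\Re z\le k\}$, \thref{Lemma: max_Dirichlet} and the Douglas formula give $1/F_k\in\ell^2_A(\Omega)\cap H^\infty$.
\item Its Fej\'er means $q_j$ are analytic polynomials with $|q_j|\le e^k$, converging to $1/F_k$ in $\ell^2(\Omega)$ and a.e.\ on $\T$.
\item With $q=q_j-1/F_k$, use $|f(\zeta)q(\zeta)-f(\xi)q(\xi)|\le|q(\zeta)-q(\xi)|+|q(\xi)|\,|f(\zeta)-f(\xi)|$ and dominated convergence in the Douglas integral. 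This gives $fq_j\to f/F_k$ in $\ell^2(\Omega)$.
\end{itemize}

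Conversely, the step you flag as hardest is not an obstacle. With $v_k=(u-k)_+$ one has $f/F_k=\exp(-(v_k+i\tilde v_k))$. The Douglas seminorm and $L^2$-norm of $v_k$ are dominated by those of $u$. Harmonic conjugation preserves the moduli of the nonzero Fourier coefficients. \thref{LEM:ExpLemmaDouglas} then handles the complex exponent in one stroke, so you need no separate control of arguments and no Richter--Sundberg formula. Since also $|f/F_k-1|\le|v_k+i\tilde v_k|\to0$ in $L^2$, you get $\sup_k\norm{f/F_k}_{\ell^2(\Omega)}<\infty$ and $f/F_k\to1$ weakly, hence $1\in[f]$.

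\textbf{The ``moreover'' clause is also only sketched.} Poisson-smoothing a piece makes it real-analytic only on a strip of width $\log(1/r_j)\to0$. That says nothing about the infinite sum near a point $\zeta_0\in\T\setminus E$: you need holomorphic extensions of the tail on a fixed neighbourhood of $\zeta_0$, with summable bounds. This is not automatic for your pieces, which are only known to equal $1$ near $E$; the natural candidates $1-u_\varepsilon$ from \thref{PROP:SAdistGENERAL} are positive everywhere. It would hold if you cut the pieces off so they are supported in shrinking neighbourhoods of $E$.

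The paper instead keeps the untruncated dilated equilibrium potentials. Their analytic completions $\operatorname{Cap}_\omega(E_n)\int K^A_W(r_nz,\zeta)\,d\nu_n(\zeta)$ are bounded by $C_\delta\operatorname{Cap}_\omega(E_n)$ near $\zeta_0$ by \thref{Lemma: K_W^A bound}, so the Weierstrass M-test applies. The simplest repair is to take $u=\Re\Phi$ with $\Phi$ from \thref{L:BLOWUPLEMMA}.

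With these points fixed, your route is a genuine alternative. It proves cyclicity of $e^{-(u+i\tilde u)}$ for every $0\le u\in\ell^2(\Omega)$, without any decomposition into pieces analytic on larger discs. The paper's argument relies on that special decomposition, but in return it gets norm convergence $fF_N\to1$ and the analytic continuation from the same structure.
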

Note that $(iii)$ is a simultaneous approximation phenomenon of analytic polynomials in $\ell^2_A(\Omega)$. The condition in $(ii)$ is a statement about cyclicity of the unilateral shift on $\ell^2_A(\Omega)$, asserting that $E$ is an admissible zero set for a cyclic vector. It is tempting to conjecture that the statement in $(ii)$ can be upgraded to:

\emph{Every non-trivial outer function $f \in C_A(\T) \cap \ell^2_A(\Omega)$ with $f=0$ only in $E$ is a cyclic function for the unilateral shift in $\ell^2_A(\Omega)$. }

For $\Omega(n)=n$, this is essentially the content of the Brown--Shields conjecture, see \cite{brown1984cyclic}.

Another classical problem inspired by Wiener's problem on translates, that has previously been viewed in the classical settings of Dirichlet spaces, is to consider the action of translations 
\[
\{\widehat{f}(m)\}_{m\in \mathbb{Z}} \mapsto \{\widehat{f}(m+n)\}_{m\in \mathbb{Z}}, \qquad n\in \mathbb{Z}
\]
on elements in $\ell^2(\Omega)$. This corresponds to the action of multiplication $f \mapsto \conj{\zeta}^n f$ on the spatial side, and a central question is to describe for which functions $f \in \ell^2(\Omega)$ the linear span of the set
\[
\{f(\zeta)\zeta^n: n\in \mathbb{Z} \}
\]
form a dense subset in $\ell^2(\Omega)$. Such elements are said to be $\emph{bicyclic}$, stemming from the fact that one considers bilateral translations of $f$. 

Our next result extends a classical theorem of Wiener to our setting.

\begin{thm}\thlabel{THM:BICYCLIC}
Let $W$ be a weight satisfying the hypothesis $(C_1)-(C_3)$, and $\{\omega(n)\}_{n\geq 0}$ be the associated moment sequence, and $\Omega(n) = \frac{1}{\omega(n)}$. For compact sets $E\subseteq \T$, the following statements are all equivalent:
\begin{enumerate}
    \item[(i)] $E$ has zero $\omega$-capacity.
    \item[(ii)] Every function $f\in C(\T)\cap \ell^2(\Omega)$ with zero set
    \[
    \{\zeta \in \T: f(\zeta)=0\} \subseteq E
    \]
    is bicyclic in $\ell^2(\Omega)$.
    \item[(iii)] The space $C^\infty_0(\T \setminus E)$ of smooth functions compactly supported in $\T \setminus E$ is dense in $\ell^2(\Omega)$.
\end{enumerate}

\end{thm}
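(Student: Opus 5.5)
The proof will run through duality between $\ell^2(\Omega)$ and $\ell^2(\omega)$ under the pairing $\langle f, S\rangle = \sum_n \widehat{f}(n)\conj{\widehat{S}(n)}$. The key dictionary is that a closed subspace of $\ell^2(\Omega)$ is dense if and only if its annihilator in $\ell^2(\omega)$ is trivial. We prove (i)$\Rightarrow$(iii)$\Rightarrow$(ii)$\Rightarrow$(i).

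\emph{(i)$\Rightarrow$(iii).} Suppose $S\in \ell^2(\omega)$ annihilates $C^\infty_0(\T\setminus E)$. Then $S$, viewed as a distribution, vanishes on $\T\setminus E$, so $\supp{S}\subseteq E$. If $S\neq 0$, \thref{THM:Distrib} produces a probability measure on $E$ with Fourier coefficients in $\ell^2(\omega)$, which contradicts $\operatorname{Cap}_\omega(E)=0$. Hence $S=0$, and $C^\infty_0(\T\setminus E)$ is dense.

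\emph{(iii)$\Rightarrow$(ii).} Let $f\in C(\T)\cap\ell^2(\Omega)$ with zero set $Z\subseteq E$, and let $S\in\ell^2(\omega)$ annihilate all $\zeta^n f$. Then the distribution $fS$ (defined via $\conj{f}S$ in the pairing) vanishes. The main point is to show that $S$ annihilates $C^\infty_0(\T\setminus Z)$, and hence $C^\infty_0(\T\setminus E)$, which is dense by (iii).

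Take $\phi\in C^\infty_0(\T\setminus Z)$. On $\supp{\phi}$ the function $f$ is bounded away from zero, but it is merely continuous, so $\phi/f$ need not be smooth. I would handle this by approximating $f$ in $\ell^2(\Omega)\cap C(\T)$ by trigonometric polynomials $p_k$; Fejér means converge in $\ell^2(\Omega)$ since the weight $\Omega$ is monotone. Eventually $p_k\neq 0$ on $\supp{\phi}$, so $\phi/p_k$ is smooth.

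We then need
\[
\langle \phi, S\rangle = \lim_k \langle (\phi/p_k)\, f, S\rangle,
\]
and the right-hand side is zero because $(\phi/p_k)f$ is a limit of polynomial multiples of $f$. This limit requires a multiplier estimate of the form
\[
\|\psi g\|_{\ell^2(\Omega)} \lesssim C_\psi\bigl(\|g\|_{\ell^2(\Omega)}+\|g\|_\infty\bigr)
\]
for smooth $\psi$, with $C_\psi$ uniform in $k$. It also requires controlling $\phi - (\phi/p_k)f = (\phi/p_k)(p_k-f)$ in $\ell^2(\Omega)$.

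Smooth functions are multipliers of $\ell^2(\Omega)$ by the Douglas formula in \thref{THM:DOUGLASFORMULA}, since the weight has moderate growth under $(C_3)$. However, the multiplier norms of $\phi/p_k$ involve derivatives of $p_k$, which may blow up. This is the main obstacle. I would try to bypass it by working directly with the Douglas-type integral $\iint |g(\zeta)-g(\xi)|^2 K(\zeta,\xi)$. That representation lets one estimate $\|\phi/f\cdot f\|$ type quantities using only that $f$ is bounded below on $\supp{\phi}$ and continuous. In other words, show directly that $h=\phi/f$ (extended by zero) lies in $\ell^2(\Omega)\cap C(\T)$, that $h$ is a multiplier-compatible pair with $f$, and that $hf$ is approximated by $\sigma_N(h)f$, where the Fejér means $\sigma_N(h)$ are trigonometric polynomials. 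These steps use the Douglas formula together with dominated convergence.

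\emph{(ii)$\Rightarrow$(i).} Suppose $\operatorname{Cap}_\omega(E)>0$, so there is a probability measure $\mu$ on $E$ with $\widehat\mu\in\ell^2(\omega)$. It suffices to find $f\in C(\T)\cap\ell^2(\Omega)$ vanishing exactly on a subset of $E$ whose translates are not dense.

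If $\mu$ charges some compact $K\subseteq E$ with $\mu(K)>0$, we look for $f$ vanishing on $K$ in the strong sense that $\langle \zeta^n f,\mu|_K\rangle=\int \zeta^n f\,d\conj{\mu|_K}=0$ for all $n$. That requires $f=0$ $\mu$-a.e., so any continuous $f\in\ell^2(\Omega)$ with $f=0$ exactly on $\supp{\mu}$ suffices, for then $\mu\neq0$ annihilates every $\zeta^n f$.

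Such an $f$ can be taken as $f=\min(1,\operatorname{dist}(\cdot,F))^{\beta}$ with $F=\supp{\mu}\subseteq E$, and $\beta$ chosen so that $f\in\ell^2(\Omega)$. Lipschitz functions lie in $\ell^2(\Omega)$, because $\Omega(n)\lesssim n^{1+\alpha'}$ with $\alpha'<1$ and Lipschitz functions lie in $\hil^{s}$ for all $s<2$... up to checking via the Douglas formula. So $\beta=1$ works. Then $f$ is continuous with zero set $F\subseteq E$, but it is not bicyclic, contradicting (ii).
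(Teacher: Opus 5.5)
Your proof is correct and is essentially the paper's argument. The core step, showing $C^\infty_0(\T\setminus E)\subseteq[f]$ by writing $\phi = fh$ with $h=\phi/f$ extended by zero, matches the paper, which takes $h=(\Phi\circ f)\phi$ for a Lipschitz $\Phi$ equal to $1/z$ on $|z|\geq\delta$; as there, you control $h$ and $f(\sigma_N h - h)$ through the Douglas integral and pass to Fej\'er means. Your use of \thref{THM:Distrib} for (i)$\Rightarrow$(iii), and of a measure $\mu\in M(E)\cap\ell^2(\omega)$ annihilating all translates of a continuous function vanishing on $\supp{\mu}$ for (ii)$\Rightarrow$(i), are also the paper's.

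The only differences are cosmetic. You close the cycle through (iii)$\Rightarrow$(ii) by invoking the density in (iii) directly, instead of re-applying \thref{THM:Distrib} to an annihilator of $[f]$. You also exhibit the non-bicyclic function explicitly as a distance function, whose existence in $C(\T)\cap\ell^2(\Omega)$ the paper leaves implicit.
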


Note that the condition $(iii)$ may again be interpreted as a simultaneous approximation phenomenon with smooth functions. From a slightly different point of view, since $C_0^\infty(\T \setminus E)$ is translation invariant, the condition $(iii)$ may also be interpreted as $E$ being a set which allows an approximate reconstruction of the space $\ell^2(\Omega)$, from (smooth) functions that vanish in a neighborhood of $E$.

\subsection{A duality principle on exceptional sets}

In a similar vein, we now ask which compact sets $E\subset \T$ of positive Lebesgue measure support a non-trivial function $f$ with $f\in \ell^2(\Omega)$. Our main result shows that the notion of $\omega$-capacity introduced in \eqref{DEF:CAPW} can still be used to yield a complete characterization.


\begin{thm}\thlabel{THM:SUPPDIRICHLET}
Let $\{\omega(n)\}_{n\geq 0}$ be the moment sequence of a weight $W$ satisfying the hypotheses $(C_1)-(C_3)$.
Let $\Omega(n) = 1/\omega(n)$, $n \geq 0$, be the associated dual weight. Then the following statements on a compact subset $E\subseteq \T$ of positive Lebesgue measure are all equivalent: 
\begin{enumerate}
    \item[(i)] $E$ supports no non-trivial $f \in \ell^2(\Omega)$,
    \item[(ii)] For any arc $I\subset \T$:
    \begin{equation*}
\operatorname{Cap}_\omega(I \setminus E) = \operatorname{Cap}_\omega(I).
\end{equation*}
\item[(iii)] \[\operatorname{Cap}_\omega(\T\setminus E)= \operatorname{Cap}_\omega(\T).\]
\end{enumerate}
\end{thm}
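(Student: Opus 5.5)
The plan is to prove the cycle (ii)$\Rightarrow$(iii)$\Rightarrow$(i)$\Rightarrow$(ii). The implication (ii)$\Rightarrow$(iii) is immediate: take $I=\T$. Everything else rests on two tools from the potential theoretical framework of Section~\ref{SEC:PotTheorFramework}.

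\emph{The tools.} The first is a dual description of $\omega$-capacity. For any set $F$ (compact, or relatively open such as $I\setminus E$, with capacity understood in the outer sense),
\[
\operatorname{Cap}_\omega(F)\asymp \inf\{\norm{u}^2_{\ell^2(\Omega)} : u\ge 1 \text{ q.e. on a neighbourhood of } F\}.
\]
Here one should choose the precise normalization in \eqref{EQ:omegaCAP} so that this is an identity. The infimum is attained by a unique equilibrium potential $u_F$ with $0\le u_F\le 1$ and $u_F=1$ quasi-everywhere on $F$. The second tool is the Douglas formula, \thref{THM:DOUGLASFORMULA}. It gives
\[
\norm{f}^2_{\ell^2(\Omega)}\asymp \abs{\widehat f(0)}^2+\int\!\!\int \abs{f(\zeta)-f(\xi)}^2K(\zeta,\xi)\,dm\,dm
\]
for a positive kernel $K$. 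From this I expect two consequences: normal contractions such as $f\mapsto \min(\max(\Re f,0),1)$ act boundedly on $\ell^2(\Omega)$, and multiplication by $C^\infty$ functions is bounded on $\ell^2(\Omega)$.

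\emph{(iii)$\Rightarrow$(i).} Argue by contraposition. Suppose a nontrivial $f\in\ell^2(\Omega)$ is supported on $E$. Replacing $f$ by $\Re f$ or $\Im f$, then by the positive or negative part, and applying the contractions, we may assume $0\le f\le 1$, $f\not\equiv 0$, and $f=0$ a.e.\ off $E$. Since $\Omega(n)\uparrow\infty$ with $\Omega(0)$ finite, $\operatorname{Cap}_\omega(\T)$ is attained by the constant $1$, with value $\Omega(0)$ up to normalization. The competitor $u_\varepsilon=1-\varepsilon f$ equals $1$ on $\T\setminus E$, and
\[
\norm{u_\varepsilon}^2_{\ell^2(\Omega)}=\Omega(0)-2\varepsilon\,\Omega(0)\widehat f(0)+O(\varepsilon^2),
\]
where $\widehat f(0)=\int f\,dm>0$. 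Hence $\operatorname{Cap}_\omega(\T\setminus E)<\operatorname{Cap}_\omega(\T)$, so (iii) fails. The one technical point is that $u_\varepsilon=1$ only a.e.\ on the open set $\T\setminus E$, not on a neighbourhood of it. I would handle this by approximating $\T\setminus E$ from inside by compact sets $K_j$, on which $f$ is quasi-everywhere zero in the fine sense because $f\in\ell^2(\Omega)$ is quasi-continuous. I would then pass to the limit using continuity of capacity along increasing sequences.

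\emph{(i)$\Rightarrow$(ii), the main construction.} Again argue by contraposition. Suppose $\operatorname{Cap}_\omega(I\setminus E)<\operatorname{Cap}_\omega(I)$ for some arc $I$, and let $u=u_{I\setminus E}$. If we had $u\ge 1$ quasi-everywhere on all of $I$, then $u$ would be admissible for $I$, forcing equality of the capacities. Hence $1-u>0$ on a subset of $I$ of positive capacity. By monotonicity and continuity of capacity in the arc, we can shrink $I$ slightly, to an arc $I'$ with $\overline{I'}\subset I$, so that $1-u$ still does not vanish quasi-everywhere on $I'$. Now pick $\chi\in C^\infty_0(I)$ with $\chi=1$ on $I'$ and set $g=\chi(1-u)$. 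Then $g\in\ell^2(\Omega)$ by the multiplier property. Its support lies in $\overline{I}$, it vanishes quasi-everywhere, hence a.e., on $I\setminus E$, and so $g$ is supported on $E$. It is also nontrivial. This last point needs an upgrade from ``positive capacity'' to ``positive Lebesgue measure''. I would get it by noting that an element of $\ell^2(\Omega)$ vanishing a.e.\ on an arc vanishes quasi-everywhere there, again by quasi-continuity. This contradicts (i).

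\emph{Main obstacle.} The delicate part is the potential theory behind these steps: the dual (Dirichlet-principle) characterization of $\operatorname{Cap}_\omega$ for non-closed sets, the quasi-everywhere versus almost-everywhere bookkeeping, and the contraction and multiplier properties. All of these must be extracted from the Douglas formula under $(C_1)$--$(C_3)$.
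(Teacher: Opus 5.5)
Your proposal is correct in outline, but it takes a genuinely different route from the paper. The paper never sets up a Dirichlet principle. It pivots on the density statement $(i')$ that $C^\infty_0(\T\setminus E)$ is dense in $\ell^2(\omega)$ (\thref{LEM:SUPPDense}), and proves $(ii)\Rightarrow(i')\Rightarrow(iii)\Rightarrow(ii)$ entirely on the measure side:
\begin{itemize}
\item the variational inequality $\norm{\mu_I-\nu}^2_{\ell^2(\omega)}\le\norm{\nu}^2_{\ell^2(\omega)}-\norm{\mu_I}^2_{\ell^2(\omega)}$ turns equality of capacities into $\ell^2(\omega)$-approximation of arc equilibrium measures by smooth probability densities living off $E$;
\item \thref{LEM:EQUILIBDENSE} upgrades this to $(i')$;
\item the Hahn--Banach separation in \thref{LEM:APPROXBELOW} recovers approximation from within on every arc.
\end{itemize}
You work instead in $\ell^2(\Omega)$ with competitor functions. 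Your $(iii)\Rightarrow(i)$ is a first-variation argument, dual to the paper's passage from $(iii)$ to $(i')$ (where $0=\langle f,\phi\nu_j\rangle\to\langle f,\phi\rangle$). It even gives the quantitative bound $\operatorname{Cap}_\omega(\T\setminus E)\le\Omega(0)-\Omega(0)^2\widehat f(0)^2/\norm{f}^2_{\ell^2(\Omega)}$ for $f\ge 0$ supported on $E$. Your $(i)\Rightarrow(ii)$ is constructive: the equilibrium potential of $I\setminus E$ yields an explicit function supported on $E$, with no Hahn--Banach and no need for \thref{LEM:EQUILIBDENSE} or \thref{LEM:APPROXBELOW}.

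The price is the Dirichlet principle, which you must derive from \thref{LEM:CapacityProperties}. If $u\ge 1$ a.e.\ on a neighbourhood of a compact $K$, a smooth cutoff, a convolution and Cauchy--Schwarz give $\operatorname{Cap}_\omega(K)\le\norm{u}^2_{\ell^2(\Omega)}$. For open $F$, take weak limits of $\operatorname{Cap}_\omega(K_j)U_{\nu_j}$ along an exhaustion $K_j\uparrow F$; Frostman gives $\ge 1$ off a set of zero capacity, hence of Lebesgue measure zero. This shows that the minimum over $\{u\ge 1 \text{ a.e. on } F\}$ equals $\operatorname{Cap}_\omega(F)$. With the paper's normalization these are exact identities and $\operatorname{Cap}_\omega(\T)=\Omega(0)=\norm{1}^2_{\ell^2(\Omega)}$, which is precisely what your $O(\varepsilon)$ gain requires. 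The paper's route in turn yields $(i')$ and approximation from within as by-products, which it reuses later.

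Two adjustments are needed.
\begin{itemize}
\item \emph{The quasi-continuity bookkeeping is unnecessary.} Since $\T\setminus E$ and $I\setminus E$ (for an open arc $I$) are open, it suffices to require $u\ge1$ a.e.\ on the open set, and this admissible class is closed and convex in $\ell^2(\Omega)$. Your ``technical point'' in $(iii)\Rightarrow(i)$ disappears, because $\T\setminus E$ is itself a neighbourhood of every compact $K\subset\T\setminus E$. Failure of admissibility for $I$ directly gives a subset of $I$ of positive Lebesgue measure on which $u<1$.
\item \emph{Minimality does not give $u_F\le 1$.} The Douglas formula controls the $\ell^2(\Omega)$-norm only up to constants, so truncation need not decrease it, and \thref{LEM:CapacityProperties} only gives $U_\nu\le 1/\operatorname{Cap}_\omega$ on $\operatorname{supp}\nu$. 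Knowing merely $u\ge 1$ a.e.\ on $I\setminus E$, your $\chi(1-u)$ need not vanish there. Take instead $g=\chi\,(1-u)_+$ with $u$ real-valued. It lies in $\ell^2(\Omega)$ by \thref{Lemma: max_Dirichlet} and the multiplier property, vanishes a.e.\ off $E$, and is non-trivial.
\end{itemize}
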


We note the interesting global-local phenomenon that is demonstrated by the equivalence of $(ii)$ and $(iii)$. The proof is again based on a simultaneous approximation argument. In fact, we shall in Section \ref{SSEC:BATHM} show that \thref{THM:SUPPDIRICHLET} leads to a new and simpler proof of the Ahlfors--Beurling Theorem. \\

In this regime of supporting sets of functions in $\ell^2(\Omega)$, we saw in \thref{THM:KHRUSHUNIBI} that for $\Omega(n) = n$, there exist sets which support a measure whose positive Fourier frequencies are $\ell^2(\Omega)$-summable, but which support no non-trivial element in $\ell^2(\Omega)$. In our next result, we are able to maximally extend Khrushchev's \thref{THM:KHRUSHUNIBI} on the discrepancy between unilateral and bilateral Fourier decay for supporting sets in $\ell^2(\Omega)$.

\begin{thm}\thlabel{THM:IMPKHRU} For any non-decreasing sequence of positive real numbers $\{\Omega(n)\}_{n\geq 0}$ tending to $+\infty$ and satisfying 
\begin{equation}\label{EQ:OMEGACOND}
\sup_{n> 1} \frac{\Omega(n)}{\log n} = +\infty,
\end{equation}
there exists a compact set $E\subset\T$ of positive Lebesgue measure such that:
\begin{enumerate}
    \item[(i)] For any non-trivial measure $\mu \in M(E)$ we have
    \[
    \sum_{n\in \mathbb{Z}} \abs{\widehat{\mu}(n)}^2 \Omega(|n|) =+\infty.
    \]
    \item[(ii)] There exists a non-trivial $\mu \in M(E)$ with
    \[
    \widehat{\mu}(n) = \mathcal{O}\left(n^{-M}\right), \qquad n\to +\infty,
    \]
    for any $M>0$.
\end{enumerate}
\end{thm}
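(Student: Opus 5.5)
We construct $E$ as the complement of a union of disjoint open arcs $\{I_k\}$ accumulating at a single point, say $1$, so that $E$ has positive Lebesgue measure. The two requirements pull in opposite directions. Part (ii) wants $E$ to carry an analytic-type measure with rapidly decaying positive coefficients. Part (i) wants $E$ to carry no measure of finite $\Omega$-energy.

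For (ii) we use the Beurling--Carleson / Khrushchev mechanism. If the gaps satisfy a suitably strong entropy condition (for rapid decay of order $n^{-M}$ for every $M$, a condition of the type $\sum_k |I_k|\log^{p}(1/|I_k|)<\infty$ for every $p$, or more simply gaps that shrink geometrically), there is an outer function $F$, smooth on $\T$, which vanishes on $\T\setminus E$ to infinite order at the endpoints. Concretely, take $\log|F|$ equal to $-\operatorname{dist}(\zeta,\partial I_k)^{-1/2}$-type profiles on each $I_k$ and $0$ on $E$; the smooth jump at the endpoints is ensured by the entropy condition. Then set $d\mu = \overline{\zeta F}\, dm$ restricted to $E$, or more precisely $\mu = \conj{G}\,\mathbbm{1}_E\, dm$ with $G$ analytic and smooth, so that $\widehat{\mu}(n)=\widehat{\conj{G}}(n) - \widehat{\conj{G}\mathbbm{1}_{\T\setminus E}}(n)$. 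The first term vanishes for $n>0$. The second term is handled by choosing $G$ to vanish to infinite order on $\T\setminus E$, i.e.\ $G\equiv 0$ there. Since an analytic $G$ cannot vanish on a set of positive measure, we instead take $\mu = F\,dm$ with $F$ a $C^\infty$ function supported on $E$, minus its analytic projection. Equivalently, we only need a measure on $E$ whose Cauchy transform is $C^\infty$ up to $\T$. The cleanest route is to quote Khrushchev's characterization: a closed set $E$ supports a measure with $\widehat{\mu}(n)=O(n^{-M})$ for all $M$ as $n\to+\infty$ whenever $E$ contains a Beurling--Carleson-type set with $\sum_k |I_k|\log^p(1/|I_k|)<\infty$ for all $p$. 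We then verify this for our gaps, which will be polynomially or exponentially small.

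For (i) we use the growth condition \eqref{EQ:OMEGACOND}. We choose a sparse sequence $n_k$ with $\Omega(n_k)\geq A_k\log n_k$, where $A_k\to\infty$ fast, and place near the point $1$ a block of $N_k$ short gaps of length $\approx n_k^{-1}$ spaced at distance $\approx \delta_k$, filling an arc $J_k$ of length $\ell_k$. The point is that $\Omega$-energy at frequency scale $n_k$ of any measure on $E$ sees the comb of gaps. Because only $\log n_k$ growth is available in the entropy budget ($\sum |I|\log(1/|I|)\approx N_k n_k^{-1}\log n_k$), we can afford $N_k n_k^{-1}\log n_k$ summable. Meanwhile $\Omega(n_k)/\log n_k\to\infty$ gives the extra room to make the $\Omega$-energy diverge.

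To prove (i), we pass to a duality or capacity statement: a measure $\mu$ on $E$ with finite $\Omega$-energy is an element of $\ell^2(\Omega)$, hence a function $f\in L^2$ supported on $E$ with $\sum|\widehat f(n)|^2\Omega(|n|)<\infty$. We then show that such $f$ must vanish a.e. on each $J_k$ by a local Poincaré/uncertainty estimate. Roughly, the gaps of length $n_k^{-1}$ force $\|f\|_{L^2(J_k)}^2 \lesssim \Omega(n_k)^{-1}\cdot(\text{local energy at scales}\ \gtrsim n_k)\cdot(\text{density factor})$. Letting the blocks accumulate and using that $\Omega(n_k)$ dominates the density loss, we obtain $f\equiv0$ near $1$. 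We then propagate this to all of $E$ by placing blocks densely, i.e.\ around every rational point in a summable way, so that every portion of $E$ is controlled.

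The main obstacle is this step: obtaining a quantitative local estimate that kills $f$ with only $\Omega(n)/\log n$ unbounded, which is sharp. We would try Hedberg-type capacity estimates. For $\Omega=\log$-growth the relevant spaces are near $\hil^{0}$, and the obstruction should come from showing that each comb $J_k\setminus E$ has nearly full $\omega$-capacity relative to $J_k$, in the sense of \thref{THM:SUPPDIRICHLET}(ii), when $\Omega$ is not of the form arising from a weight. This requires approximating general $\Omega$ from below by a regular one satisfying $(C_1)$--$(C_3)$ and then invoking \thref{THM:SUPPDIRICHLET}.
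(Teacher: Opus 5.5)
Your overall architecture matches the paper's: a comb-like gap structure with finite Beurling--Carleson entropy, Khrushchev's theorem for (ii), and reduction of (i) to $L^2$ functions on $E$. However, the proof of (i), which is the heart of the theorem, is missing. The ``local Poincar\'e/uncertainty estimate'' is never formulated. The fallback of regularizing $\Omega$ and invoking \thref{THM:SUPPDIRICHLET} only restates the problem: you would still have to show $\operatorname{Cap}_\omega(\T\setminus E)=\operatorname{Cap}_\omega(\T)$, with $\omega=1/\Omega$. That means producing probability densities on $\T\setminus E$ converging to $m$ in $\ell^2(\omega)$, which is exactly the crux.

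The missing idea is a direct duality argument with explicit global test functions. It needs only monotonicity of $\Omega$ and no localization. A measure in $M(E)$ of finite $\Omega$-energy is $f\,dm$ with $f\in L^2(E)$. Then $\widehat f(0)=\langle f,1-\phi\rangle$ for every $\phi\in C^\infty_0(\T\setminus E)$, so by Cauchy--Schwarz it suffices that $1$ (and, by the same computation, each $\zeta^k$) lie in the $\ell^2(\omega)$-closure of $C^\infty_0(\T\setminus E)$; this is \thref{LEM:SUPPDense}.

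Take equispaced combs $\phi_{\delta,N}(\zeta)=\psi_\delta(\zeta^N)$, where $\psi_\delta$ is a normalized smooth bump of width $\delta$. Then $\phi_{\delta,N}$ lives on $N$ arcs of length $\delta/N$, its spectrum lies in $N\mathbb{Z}$, and $\abs{\widehat{\phi_{\delta,N}}(Nn)}\lesssim\min(1,(\delta\abs{n})^{-1})$. Monotonicity alone then gives $\norm{\phi_{\delta,N}-1}^2_{\ell^2(\omega)}\lesssim\omega(N)/\delta$.

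Now choose $\omega(N_j)\log N_j\le 4^{-j}$, $\delta_j=3^{-j}/\lceil\log N_j\rceil$, and $E=\T\setminus\bigcup_j U_{\delta_j,N_j}$. The removed length is at most $\sum_j\delta_j<1$, the entropy is $\lesssim\sum_j\delta_j\log(N_j/\delta_j)<\infty$, and $\norm{\phi_j-1}^2_{\ell^2(\omega)}\lesssim(3/4)^j$. This is \thref{PROP:SAUNIBI}, and (ii) then follows from Khrushchev's theorem.

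Your choice of scale is also misaligned. The Fourier mass of $\phi_{\delta,N}-1$ sits essentially on $N\le\abs{n}\lesssim N/\delta$, i.e.\ between the inverse spacing and the inverse gap length. For a non-decreasing $\Omega$, which may jump, what matters is the size of $\Omega$ at the inverse \emph{spacing}. Tying $n_k$ to the inverse gap length, as you do, makes the comb estimate gain nothing when $\Omega$ is $\log$-like just below $n_k$.

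There are also genuine errors on the side of (ii) and in the design of $E$:
\begin{enumerate}
\item[(a)] The entropy hypothesis you attribute to Khrushchev, $\sum_k\abs{I_k}\log^p(1/\abs{I_k})<\infty$ for all $p$, is not what is needed. Ordinary Beurling--Carleson entropy already yields a non-trivial measure on $E$ with $C^\infty$ Cauchy integral, which is all the paper uses.
\item[(b)] That stronger hypothesis is in fact incompatible with (i). Run the proof of \thref{PROP:ENTROPYDECAY} with $dt/t$ replaced by $\log^{p-1}(e/t)\,dt/t$. It shows that any $K\subseteq E$ of positive measure with finite $\log^p$-entropy satisfies $\sum_n\log^p(2+\abs{n})\abs{\widehat{1_K}(n)}^2<\infty$. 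Hence $1_K\,dm$ violates (i) for $\Omega(n)=\log^2(2+n)$, which satisfies \eqref{EQ:OMEGACOND}. Your ``geometrically shrinking gaps'' option is ruled out for the same reason.
\item[(c)] Gaps accumulating at a single point leave arcs inside $E$. A smooth bump on such an arc has finite $\Omega$-energy whenever $\Omega$ grows polynomially, so $E$ must be nowhere dense. You later patch this with blocks around every rational point. The global combs above make $E$ nowhere dense automatically, since each $U_{\delta_j,N_j}$ meets every arc of length $>2\pi/N_j$, and they make your ``propagation'' step unnecessary.
\end{enumerate}
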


Surprisingly, it turns out that the condition in \eqref{EQ:OMEGACOND} is very sharp in the following precise sense. Whenever a compact set $E\subset \T$ satisfies the property $(ii)$ in the statement of \thref{THM:IMPKHRU}, there exists a compact subset $K\subseteq E$ of positive Lebesgue measure for which its indicator function satisfies:
\[
\sum_n \log (1+|n|) \abs{\widehat{1_K}(n)}^2 < \infty.
\]
This will be proved in \thref{PROP:ENTROPYDECAY}.



\subsection{Notation and organization}
The manuscript is organized as follows: 

Section~\ref{SEC:PotTheorFramework} will be devoted to developing a potential theoretical framework in the context of $\ell^2(\omega)$. We introduce a notion of energy, potentials, and $\omega$-capacity, which turns out to be the appropriate measure for our exceptional sets. At the end of the section, we shall deduce a Douglas-type representation formula for the Cauchy dual of $\ell^2(\omega)$, which will play a crucial role for our developments. 

Section~\ref{SEC:MAINTECHRES} is devoted to the proofs of our results announced in Sections 2.1--2.2, namely Theorems 2.1--2.4. There we will crucially make use of our potential theoretical framework and develop the common simultaneous approximation schemes, allowing us to give a unified approach to the aforementioned results.

Lastly, Section~\ref{SEC:UNIQDIRICHLET} is devoted to proving our results on uniqueness sets in $\ell^2(\Omega)$ in \thref{THM:SUPPDIRICHLET}, and to proving a sharp discrepancy between supporting sets for unilateral versus bilateral Fourier decay in \thref{THM:IMPKHRU}. The section concludes with an application of our developments in order to give a short and simple proof of the Ahlfors--Beurling \thref{THM:A-BTHM}.

If two quantities $A,B>0$ are related by $A\leq cB$ for some constant $c>0$, we simply write $A\lesssim B$. In case both $A\lesssim B$ and $B\lesssim A$ hold, we simply write $A\asymp B$. Occasionally, it will be convenient to allow constants to change from line to line, despite maintaining the same designation.

\section{A potential theoretical framework}\label{SEC:PotTheorFramework}

\subsection{Potentials and $\omega$-capacity}

Our goal in this subsection is to develop a potential-theoretic framework in the setting of $\ell^2(\omega)$, where $\omega=\{\omega(n)\}_{n\geq 0}$ is a moment sequence associated with a radial weight $W$ satisfying the conditions $(C_1)$--$(C_3)$. The development of potential-theoretical frameworks has also previously been carried out in other similar frameworks — for instance, by Chalmoukis and Hartz in the setting of Drury-Arveson spaces \cite{chalmoukis2024potential}. See also \cite{chalmoukis2022totally} for work in similar directions.

Given a positive finite Borel measure $\mu$ on $\T$, we let
\[
P(\mu)(z) := \int_{\T} \frac{1-|z|^2}{\abs{\zeta-z}^2} \, d\mu(\zeta), \qquad z \in \D,
\]
denote its harmonic Poisson extension to the unit disc $\D$. To compute the $\ell^2(\omega)$-norm of $\mu$, or equivalently the $h^2(W)$-norm of $P(\mu)$, we use Fubini--Tonelli to obtain
\begin{align*}
\sum_{n\in\mathbb{Z}} \abs{\widehat{\mu}(n)}^2 \omega(|n|)
&= \int_{\D} P(\mu)(z)^2 W(1-|z|^2)\,dA(z) \\
&= \int_{\T}\int_{\T}\int_{\D}
\frac{(1-|z|^2)^2W(1-|z|^2)}
{\abs{z-e^{it}}^2\abs{z-e^{is}}^2}
\,dA(z)\,d\mu(e^{it})\,d\mu(e^{is}) \\
&= \int_{\T}\int_{\T} K_W(e^{it},e^{is})
\,d\mu(e^{it})\,d\mu(e^{is}),
ademlimz
\end{align*}
where the positive kernel $K_W:\T\times\T\to(0,\infty]$ is defined by the area integral
\[
K_W(e^{it},e^{is})
:= \int_{\D}
\frac{(1-|z|^2)^2W(1-|z|^2)}
{\abs{z-e^{it}}^2\abs{z-e^{is}}^2}
\,dA(z).
\]
The expression for $K_W$ can be simplified substantially. Indeed, using polar coordinates, we may write
\[
K_W(e^{it},e^{is})
= \int_0^1\int_0^{2\pi}
\frac{(1-r^2)^2W(1-r^2)}
{|re^{i\theta}-e^{it}|^2|re^{i\theta}-e^{is}|^2}
\frac{d\theta\,rdr}{\pi}.
\]
The inner integral can be computed using the reproducing property of the Poisson kernel as follows
\begin{align*}
\int_0^{2\pi}
\frac{(1-r^2)^2}
{|re^{i\theta}-e^{it}|^2|re^{i\theta}-e^{is}|^2}
\frac{d\theta}{2\pi}
&= \int_0^{2\pi}
P_{re^{is}}(e^{i\theta})P_{re^{it}}(e^{i\theta})
\frac{d\theta}{2\pi} \\
&= P_{r^2e^{is}}(e^{it}).
\end{align*}
With this at hand, we may now apply the change of variables $r\mapsto r^2$, which yields the simplified expression
\begin{align*}
K_W(e^{it},e^{is})
&= \int_0^1 P_{re^{is}}(e^{it})W(1-r)\,dr \\
&= \int_0^1
\frac{1-r^2}{1-2r\cos(t-s)+r^2}W(1-r)\,dr.
\end{align*}
This formula shows that $K_W$ depends only on $t-s$ and satisfies the symmetry properties
\[
K_W(e^{it},e^{is})
=K_W(e^{i(t-s)},1)
=K_W(e^{i(s-t)},1),
\]
Moreover, the function $t \mapsto K_W(e^{it},1)$ is positive and even on $[-\pi,\pi]$, and non-increasing on $[0,\pi]$. This implies that $K_W(e^{it},e^{is})$ actually only depends on the distance $|t-s|$, hence we shall whenever convenient also use the designation
\[
K_W(e^{is},e^{it}) = \kappa_W(|s-t|) \qquad e^{it}, e^{is} \in \T. 
\]

Let us record the Fourier representation of the kernel. For fixed $\zeta\in\T$, the function $e^{it}\mapsto K_W(e^{it},\zeta)$ belongs to $L^1(\T)$, since
\[
\int_{\T}K_W(e^{it},\zeta)\,dt
= \int_0^1\left(\int_{\T}P_{r\zeta}(e^{it})\,dt\right)W(1-r)\,dr
= \int_0^1W(r)\,dr<\infty.
\]
Hence we can compute its Fourier coefficients in the usual way, which are given by
\[
\widehat{K_W(\cdot,\zeta)}(n)
= \omega(|n|)\conj{\zeta}^{n}, \qquad n\in\mathbb{Z}.
\]
We define the $\omega$-energy of a positive finite Borel measure $\mu$ on $\T$ by
\begin{equation}
    \label{E:omegaEnergyDef}
    \mathcal{E}_\omega(\mu)
:= \int_{\T}\int_{\T}\kappa_W(|s-t|)
\,d\mu(e^{it})\,d\mu(e^{is}).
\end{equation}
The preceding computation shows that
\[
\mathcal{E}_\omega(\mu)
= \sum_{n\in\mathbb{Z}}\omega(|n|)\abs{\widehat{\mu}(n)}^2 = \omega(0)\mu(\T)^2
+2\sum_{n\geq 1}\omega(n)\abs{\widehat{\mu}(n)}^2.
\]
Hence $\mu$ has finite $\omega$-energy if and only if its sequence of Fourier coefficients belongs to $\ell^2(\omega)$. Furthermore, we also note that the condition $(C_2)$, the divergence of the Dini-integral of $W$, can be rephrased as the condition $K_W(0+)=+\infty$. This further clarifies that Dirac measures have infinite $\omega$-energy.

Given a compact set $E\subset\T$, let $P(E)$ denote the set of probability measures supported on $E$. We define the $\omega$-capacity of $E$ by
\begin{equation}\label{EQ:omegaCAP}
\operatorname{Cap}_\omega(E)
:= \sup_{\mu\in P(E)}\frac{1}{\mathcal{E}_\omega(\mu)}.
\end{equation}

We may also extend the notion of $\omega$-capacity to open sets $U\subset \T$ by 
\[
\operatorname{Cap}_\omega (U) = \sup \left\{ \operatorname{Cap}_\omega(E): E \subseteq U, \, E \text{ compact} \right\}.
\]

It is also useful to introduce the potentials associated with $K_W$. Given a positive finite Borel measure $\mu$ on $\T$, we define its $W$-potential by
\[
U_\mu(e^{it})
:= \int_{\T}\kappa_W(|s-t|)\,d\mu(e^{is}).
\]
If $\mu$ has finite $\omega$-energy, then $U_\mu$ is finite $\mu$-almost everywhere. Indeed, this follows from the identity
\[
\int_{\T}U_\mu\,d\mu=\mathcal{E}_\omega(\mu).
\]
Formally, $U_\mu=K_W*\mu$, and the convolution formula gives
\[
\widehat{U_\mu}(n)
= \widehat{K_W}(n)\widehat{\mu}(n)
= \omega(|n|)\widehat{\mu}(n).
\]

Since the energy $\mathcal{E}_\omega$ is defined in terms of a continuous even kernel and monotone kernel $K_W$, our $\omega$-capacity enjoys many of the properties of the classical logarithmic capacity. In particular, we shall use the following facts proved in \cite[Chapter 2.1]{dirichletspaceprimer}.

\begin{lemma} \thlabel{LEM:CapacityProperties} \,
\begin{enumerate}[(i)]
    \item (Outer regularity) If $\{E_n\}_{n \geq 1}$ is a sequence of decreasing compact sets satisfying $\cap_{n} E_n = E$, then 
    \[
    \lim_{n \to \infty} \operatorname{Cap}_\omega(E_n) = \operatorname{Cap}_\omega(E).
    \]
    \item (Existence of equilibrium measures) If a compact set $E\subset \T$ satisfies $\operatorname{Cap}_\omega(E) > 0$, then there exists $\nu\in P(E)$ such that 
    \[
    \operatorname{Cap}_\omega(E) = \frac{1}{\mathcal{E}_\omega(\nu)}.
    \]
    \item (Frostman's Theorem for equilibrium potentials) Let $\nu$ be an equilibrium measure for a compact set $E$ satisfying $\operatorname{Cap}_\omega(E) > 0$. Then the equilibrium potential $U_\nu$ satisfies
    \begin{itemize}
    \item $U_\nu(x) \leq 1/\operatorname{Cap}_\omega(E)$ for all $x \in \text{supp}(\nu)$,
    \item $U_\nu(x) \geq 1/\operatorname{Cap}_\omega(E)$ for all $x \in E \setminus E_0$, where $E_0 \subset E$ is some set of zero (outer\footnote{In the sense that $\inf_{U} \operatorname{Cap}_\omega(U) = 0$, where the infimum is taken over open sets $U$ containing $E_0$.}) $\omega$-capacity.
    \end{itemize}
\end{enumerate}
    
\end{lemma}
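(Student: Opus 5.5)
The plan is to treat $K_W$ as a positive, lower semicontinuous, positive-definite kernel and run the classical Frostman--de la Vallée Poussin argument, as in \cite[Chapter 2.1]{dirichletspaceprimer}. The structural input is the identity
\[
\mathcal{E}_\omega(\mu)=\sum_{n\in\mathbb{Z}}\omega(|n|)\abs{\widehat{\mu}(n)}^2 .
\]
Each $\widehat{\mu}(n)$ is weak-$*$ continuous in $\mu$, and all weights $\omega(|n|)$ are positive. Fatou's lemma for series then shows that $\mathcal{E}_\omega$ is weak-$*$ lower semicontinuous on $M_+(\T)$. Moreover, $\mathcal{E}_\omega$ is the square of a Hilbert norm, hence strictly convex on measures with finite energy. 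The kernel $\kappa_W$ is also the increasing limit, as $r\to1^-$, of the continuous kernels $\int_0^r P_{\rho}(e^{it})W(1-\rho)\,d\rho$. Consequently every potential $U_\mu$ is lower semicontinuous.

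\emph{(ii) and (i).} For (ii), I would take a minimizing sequence $\mu_k\in P(E)$ for $\mathcal{E}_\omega$. Since $P(E)$ is weak-$*$ compact, a subsequence converges to some $\nu\in P(E)$, and lower semicontinuity gives $\mathcal{E}_\omega(\nu)\le\liminf_k\mathcal{E}_\omega(\mu_k)$, so $\nu$ is a minimizer. For (i), monotonicity gives $\operatorname{Cap}_\omega(E_n)\downarrow L\ge\operatorname{Cap}_\omega(E)$. If $L>0$, I would pick equilibrium measures $\nu_n$ for $E_n$ and pass to a weak-$*$ limit $\nu$. This limit is supported in $\bigcap_n E_n=E$, because each $E_m$ is closed and $\nu_n\in P(E_m)$ for $n\ge m$. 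Lower semicontinuity then gives $\mathcal{E}_\omega(\nu)\le 1/L$, hence $\operatorname{Cap}_\omega(E)\ge L$.

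\emph{(iii): lower bound off a small set.} Set $V=\mathcal{E}_\omega(\nu)=1/\operatorname{Cap}_\omega(E)$. For $\varepsilon>0$, the set $F_\varepsilon=\{x\in E: U_\nu(x)\le V-\varepsilon\}$ is compact, by lower semicontinuity of $U_\nu$. Suppose $\operatorname{Cap}_\omega(F_\varepsilon)>0$, and let $\sigma\in P(F_\varepsilon)$ have finite energy. Consider $\nu_t=(1-t)\nu+t\sigma\in P(E)$. Then
\[
\frac{d}{dt}\mathcal{E}_\omega(\nu_t)\Big|_{t=0}=2\Big(\int U_\nu\,d\sigma-V\Big)\le-2\varepsilon<0,
\]
which contradicts minimality. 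Hence each $F_\varepsilon$ has zero capacity, and the exceptional set is $E_0=\bigcup_k F_{1/k}$. To conclude that $E_0$ has zero \emph{outer} capacity, I would use two facts. First, outer regularity from (i): for a compact $F$ of zero capacity, the closed $\delta$-neighbourhoods of $F$ have capacity tending to $0$, and so do the open ones they contain. Second, countable subadditivity of the capacity on open sets, which I would take from the cited reference. This step is the main technical point.

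\emph{(iii): upper bound on the support.} Since $\nu$ has finite energy, $\nu(F)=0$ for every compact $F$ of zero capacity; otherwise $\nu|_F$ would be a non-trivial measure of finite energy supported on $F$. Thus $\nu(E_0)=0$, so $U_\nu\ge V$ holds $\nu$-a.e. Combined with $\int U_\nu\,d\nu=V$, this forces $U_\nu=V$ $\nu$-a.e. Now fix $x\in\supp{\nu}$. Every neighbourhood of $x$ has positive $\nu$-mass, so $x$ is a limit of points $y$ with $U_\nu(y)=V$. Lower semicontinuity then gives $U_\nu(x)\le\liminf_{y\to x}U_\nu(y)\le V$.
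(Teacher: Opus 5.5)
Your proposal is correct. It is the classical Frostman argument from \cite[Chapter 2.1]{dirichletspaceprimer}, which is exactly what the paper relies on: it cites that reference and gives no proof of its own.

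The one step you outsource, countable subadditivity on open sets, can be closed directly from your support bound, and there is no circularity because that bound used only $\nu(E_0)=0$. Take a compact $K\subseteq U_1\cup\dots\cup U_m$. The measure $\mu=\operatorname{Cap}_\omega(K)\nu_K$ satisfies $U_\mu\le 1$ on $\supp{\mu}$. Its restrictions to disjoint compact sets $C_j\subseteq U_j$ carrying almost all of its mass therefore satisfy $\mathcal{E}_\omega(\mu|_{C_j})\le\mu(C_j)$, which gives $\operatorname{Cap}_\omega(K)\le\sum_j\operatorname{Cap}_\omega(U_j)$.
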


For later use, it will also be convenient to consider the analytic Herglotz average of $K_W$, defined as
\begin{equation}
\label{E:KWADEF}
K_W^A(z,\zeta)
:= \int_0^1\frac{\zeta+rz}{\zeta-rz}W(1-r)\,dr,
\quad
z\in\mathbb{C}\setminus\{\zeta/r:r\in(0,1]\},
\quad \zeta\in\T.
\end{equation}
It is easy to see that $z\mapsto K_W^A(z,\zeta)$ is analytic in the slit plane $\mathbb{C}\setminus\{\zeta/r:r\in(0,1]\}$. Moreover, for $z\in\D$,
\[
\Re K_W^A(z,\zeta)
= \int_0^1P_{rz}(\zeta)W(1-r)\,dr.
\]
At boundary points $e^{it} \in\T\setminus\{\zeta\}$, its real part agrees with $K_W(e^{it},\zeta)$. Later, we will need the following simple lemma quantifying the magnitude of $K_W^A$ away from the slits $\{\zeta/r : r \in (0, 1]\}$.

\begin{lemma} \thlabel{Lemma: K_W^A bound}
    For every number $\delta>0$, there is a constant $C_\delta>0$ such that
    \[
    |K_W^A(z, \zeta)| \leq C_\delta,
    \]
    whenever $\inf_{r \in (0,1]} |\zeta-rz| \geq \delta$, $\zeta \in \T$.
\end{lemma}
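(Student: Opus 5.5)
The key observation is that, for $z$ in the region where the slit is avoided, the integrand in \eqref{E:KWADEF} is uniformly bounded. Then $|K_W^A|$ is controlled by the total mass $\int_0^1 W(1-r)\,dr=\int_0^1 W(t)\,dt<\infty$, which is finite by the integrability of $W$. So the plan is to bound the kernel $\frac{\zeta+rz}{\zeta-rz}$ pointwise and integrate against $W(1-r)\,dr$.

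Concretely, I fix $\zeta\in\T$, $\delta>0$, and $z$ with $\inf_{r\in(0,1]}|\zeta-rz|\ge\delta$. Note that this hypothesis already forces $z$ off the slit $\{\zeta/r\}$, so $K_W^A(z,\zeta)$ is defined by the absolutely convergent integral. For each $r\in(0,1]$ the denominator satisfies $|\zeta-rz|\ge\delta$.

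For the numerator I will write $\zeta+rz=2\zeta-(\zeta-rz)$, which gives
\[
\left|\frac{\zeta+rz}{\zeta-rz}\right|\le \frac{2}{|\zeta-rz|}+1\le \frac{2}{\delta}+1 .
\]
This is the only place requiring care: the obvious bound on the numerator by $1+r|z|$ is useless because $|z|$ may be large. Writing the numerator relative to the denominator avoids any dependence on $|z|$.

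Integrating this bound then yields
\[
|K_W^A(z,\zeta)|\le \left(1+\frac{2}{\delta}\right)\int_0^1 W(t)\,dt=:C_\delta ,
\]
which is independent of $\zeta$ and $z$. There is no real obstacle beyond the numerator trick above. The only other point to check is that the value $r=0$, excluded from the infimum, is harmless: it is a single point, so it does not affect the integral, and in any case the integrand there equals $1$.
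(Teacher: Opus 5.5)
Your proof is correct and follows the same route as the paper: bound the integrand $\frac{\zeta+rz}{\zeta-rz}$ pointwise by a constant depending only on $\delta$, then integrate against $W(1-r)\,dr$. Your rewriting $\zeta+rz=2\zeta-(\zeta-rz)$ is slightly better than the paper's claim $|1+rz\conj{\zeta}|\le 2$, which tacitly requires $|z|\le 1$. The lemma is stated for arbitrary $z$ satisfying the hypothesis, and it is later applied on discs centred at points of $\T$, so $z$ may lie outside $\cD$; your bound covers that case at the harmless cost of the constant $1+2/\delta$ instead of $2/\delta$.
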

\begin{proof}
    We need to examine the kernel $(1+r z \overline{\zeta} )/(1 - r z \overline{\zeta})$. Clearly $|1+r z \overline{\zeta}| \leq 2$. Next, $|1 - r z \overline{\zeta}| = |\zeta - r z| \geq \delta$. 
    Putting these estimates together we see that
    \[
    \left| \frac{1+r z \overline{\zeta} }{1 - r z \overline{\zeta}} \right| \leq \frac{2}{\delta}.
    \]
    Thus using the definition of $K_W^A$ shows that
    \[
    |K_W^A(z, \zeta)| \leq \frac{2}{\delta} \int_0^1 W(r) dr. 
    \]
\end{proof}

\subsection{Cauchy duality}
\label{SEC:CauchyDualitySec}

Here, we clarify the concept of Cauchy duality between $\ell^2(\omega)$ and $\ell^2(\Omega)$. Here $\Omega$ is the reciprocal weight sequence of $\omega$, that is
\[
\Omega(n) := \frac{1}{\omega(n)}, \qquad n=0,1,2,\dots
\]
This duality relation plays a central role and will be extensively utilized in our developments, and only the requirement that the weight sequence satisfies $\omega(n) \downarrow 0$ as $n\to \infty$ is needed. Note by Cauchy-Schwarz inequality, we have
\[
\abs{\sum_{n\in \mathbb{Z}} \widehat{f}(n) \conj{\widehat{g}(n)} }\leq \norm{f}_{\ell^2(\omega)} \norm{g}_{\ell^2(\Omega)},
\]
hence every element in $g\in \ell^2(\Omega)$ defines a dual element of $\ell^2(\omega)$ in the classical $\ell^2$ Cauchy-pairing
\[
\ell_g(f):= \sum_{n} \widehat{f}(n) \conj{\widehat{g}(n)}, \qquad f\in \ell^2(\omega).
\]
Conversely, if $\ell$ is a bounded linear functional on the Hilbert space $\ell^2(\omega)$, then it follows from Riesz representation theorem that we can identify $\ell$ as a unique element $g\in \ell^2(\omega)$ considered in canonical Hilbert space pairing of $\ell^2(\omega)$:
\[
\ell(f) = \sum_{n} \widehat{f}(n) \conj{\widehat{g}(n)} \omega(n), \qquad f\in \ell^2(\omega).
\]
Now setting 
\[
G_w(\zeta) := \sum_n \widehat{g}(n) \omega(n) \zeta^n \qquad \zeta \in \T,
\]
we immediately see that $G_w \in \ell^2(\Omega)$ with 
\[
\norm{G_w}_{\ell^2(\Omega)} = \norm{g}_{\ell^2(\omega)},
\]
and
\[
\abs{\sum_n \widehat{f}(n) \conj{\widehat{G_w}(n)}} = \abs{\ell(f)} \leq C \norm{f}_{\ell^2(\omega)}, \qquad f\in \ell^2(\omega).
\]
This establishes the duality relation $\ell^2(\omega) \cong \ell^2(\Omega)'$ viewed in the $\ell^2$ Cauchy-pairing. Repeating the same argument for $\ell^2(\Omega)$ we actually see that $\ell^2(\omega)$ is reflexive. Furthermore, since $\ell^2(\Omega) \hookrightarrow \ell^2$, we may also express the Cauchy pairing as
\[
\sum_n \widehat{f}(n) \conj{\widehat{g}(n)} = \lim_{r\to 1-} \int_{\T} f(r\zeta) \conj{g(\zeta)} dm(\zeta), \qquad f \in \ell^2(\omega), \quad g\in \ell^2(\Omega).
\]
In the intermediate step, we used that dilatations with $0<r<1$
\[
f_r(\zeta) := \sum_n \widehat{f}(n) r^{|n|} \zeta^n, \qquad \zeta \in \T,
\]
of elements $f\in \ell^2(\omega)$ are continuous in $\ell^2(\omega)$.

\subsection{Moment sequences and elements in $\ell^2(\omega)$}

Here, we derive some asymptotic behaviors of the associated moment sequences of weights $W$. This will ensure that the elements in our range of spaces $\ell^2(\omega)$ may be identified as genuine distributions. 

As briefly mentioned in the introduction, only the behavior of $W$ close to the origin matters, as for any fixed $0<\delta<1$:
\[
\int_{0}^{1-\delta} r^n W(1-r) dr \leq (1-\delta)^n \int_0^1 W(r)dr \to 0, \qquad \text{exponentially}.
\]

The following lemma establishes a rather precise asymptotic behavior of the moment sequence, more explicitly expressed in terms of $W$.
\begin{lemma}\thlabel{LEM:MOMW} Let $W$ be a weight satisfying the doubling estimate
\begin{equation}\label{EQ:W<Double}
    W(t)\leq cW(t/2), \qquad 0<t<1,
\end{equation}
for some $c=c(W)>0$. Then the associated moment sequence satisfies 
\[
\omega(n) \asymp \int_0^{1/n} W(t) dt.
\]
\end{lemma}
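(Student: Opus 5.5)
We substitute $r = 1-t$, so that $\omega(n) = \int_0^1 (1-t)^n W(t)\,dt$, and split this integral at $t = 1/n$. Throughout we may assume $n \geq 2$; finitely many $n$ are harmless since both sides are positive. We also extend the doubling hypothesis \eqref{EQ:W<Double} in the natural way, reading $W$ on $(0,1]$.

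\emph{Lower bound.} On $[0,1/n]$ we have $(1-t)^n \geq (1-1/n)^n \geq 1/4$ for $n\geq 2$. Hence
\[
\omega(n) \geq \tfrac14 \int_0^{1/n} W(t)\,dt .
\]
This direction uses nothing about $W$ beyond non-negativity.

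\emph{Upper bound.} The part of the integral over $[0,1/n]$ is trivially at most $\int_0^{1/n}W$. For the tail, decompose $[1/n,1]$ into dyadic shells $I_k = [2^{k}/n, 2^{k+1}/n]$, for $0\le k\le \log_2 n$. On $I_k$ we have $(1-t)^n \leq e^{-nt}\leq e^{-2^k}$. Iterating the doubling estimate $k+1$ times, after the substitution $t = 2^{k+1}s$, gives
\[
\int_{I_k} W(t)\,dt = 2^{k+1}\int_{1/(2n)}^{1/n} W(2^{k+1}s)\,ds \leq 2^{k+1}c^{k+1}\int_{0}^{1/n} W(s)\,ds .
\]
Summing over the shells,
\[
\int_{1/n}^1 (1-t)^nW(t)\,dt \leq \Bigl(\sum_{k\geq 0} e^{-2^k}(2c)^{k+1}\Bigr)\int_0^{1/n}W .
\]
The series converges for any fixed $c>0$, since $e^{-2^k}$ decays superexponentially, so this yields $\omega(n)\lesssim \int_0^{1/n}W$.

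The only delicate step is bookkeeping at the endpoints: the last shell may overshoot $t=1$ (we simply truncate it), and the doubling estimate is applied only at points where $W$ is defined, namely $0<t\le 1$. Since the superexponential factor dominates any geometric growth coming from $c$, I expect no genuine obstacle here.
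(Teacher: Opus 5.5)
Your proof is correct and follows the paper's route: the lower bound via $(1-1/n)^n\ge 1/4$, and for the upper bound a dyadic decomposition of $[1/n,1]$ with $(1-t)^n\le e^{-nt}$ and iterated doubling, summed against the superexponentially decaying factor. The differences are cosmetic: the paper first passes the doubling to $A(t)=\int_0^t W$ (namely $A(t)\le 2cA(t/2)$) and handles the leftover piece $[2^J/n,1]$ separately. You instead apply doubling to $W$ directly after rescaling and absorb that piece into a truncated last shell.
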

Note that \eqref{EQ:W<Double} is substantially weaker than $(C_3)$, as no quantitative size on $c(W)>0$ is imposed.
\begin{proof}
Set 
\[
A(t) := \int_{0}^t W(s) ds, \qquad 0\leq t\leq 1,
\]
which is increasing. The lower bound simply follows from the estimates
\[
\omega(n) \geq \int_0^{1/n} (1-t)^n W(t) dt \geq (1-1/n)^n A(1/n) \geq \frac{A(1/n)}{4}, \quad n \geq 2.
\]
It thus only remains to establish the reverse estimate. To this end, we primarily note that the doubling estimate in \eqref{EQ:W<Double} on $W$ implies that
\[
A(t) = 2\int_0^{t/2} W(2s) ds \leq 2c A(t/2), \qquad 0<t<1.
\]
With this at hand, we may bound the integral by splitting it up into smaller intervals. Namely, set $J = \lfloor\log_2 n \rfloor$ and note that
\[
\omega(n) = \int_0^{1/n}(1-t)^{n} W(t)dt + \sum_{1\leq j\leq J} \int_{2^{j-1} /n}^{2^{j}/n} (1-t)^n W(t)dt + \int_{2^J/n}^{1} (1-t)^n W(t)dt.
\]
Now the first term is trivially 
\[
\int_0^{1/n}(1-t)^{n} W(t)dt \leq A(1/n).
\]

For the terms appearing in the sum, we use the estimate $(1-t)^n \leq e^{-nt}$, in conjunction with the doubling estimate on $A$. This yields
\[
\int_{2^{j-1} /n}^{2^{j}/n} (1-t)^n W(t)dt \leq e^{-2^{j-1}} A(2^j/n) \leq (2c)^j e^{-2^{j-1}} A(1/n), \qquad 1\leq j \leq \log_2 n.
\]
Since the sum in $j$ is convergent, we conclude that
\[
\sum_{1\leq j\leq J} \int_{2^{j-1} /n}^{2^{j}/n} (1-t)^n W(t)dt \lesssim A(1/n).
\]

Finally, for the last term we may again use the estimate $(1-t)^n \leq e^{-nt}$ together with the fact that $nt \geq 2^J$ throughout this interval to obtain
\[
\int_{2^J/n}^{1} (1-t)^n W(t)dt \leq e^{-2^J} A(1).
\]
Applying the doubling estimate $J+1$ times together with the fact that $2^{-(J+1)} \leq 1/n$ shows that $A(1) \leq (2c)^{J+1}A(1/n)$, which together with the other estimates finishes the proof.
\end{proof}

We remark that the above lemma applies to non-increasing weights, as well as non-decreasing weights which do not decay too rapidly. Furthermore, it is easy to see that for monotone $W$ satisfying \eqref{EQ:W<Double}, the associated moment sequence satisfies the decay restriction 
\[
\omega(n) \gtrsim \frac{1}{n^\alpha},\qquad n\geq 1,
\]
for some $\alpha=\alpha(W)>0$. This implies that elements $u \in \ell^2(\omega) \cong h^2(W)$ may be regarded as distributions on $\T$, that is
\[
\abs{\widehat{u}(n)} \lesssim (1+|n|)^{\alpha} , \qquad n=0,\pm1,\pm 2, \dots
\]
This has the advantage that the $\ell^2$ Cauchy-pairing of $u$ with a smooth $\phi \in C^\infty(\T)$ can also be expressed as the usual distributional pairing:
\[
\conj{u}(\phi) = \sum_{n} \conj{\widehat{u}(n)} \widehat{\phi}(n), \qquad \phi \in C^\infty(\T).
\]
By duality, it then obviously follows that $C^\infty(\T) \subset \ell^2(\Omega)$.

\subsection{A Douglas formula for $\ell^2(\Omega)$} 
In this subsection, we shall establish a Douglas-type  representation formula for $\ell^2(\Omega)$, where $\Omega$ denotes the reciprocal weight of $\omega$. Our main result can be summarized as follows.

\begin{thm}\thlabel{THM:DOUGLASFORMULA} Let $W$ be a weight satisfying the conditions $(C_1)-(C_3)$ and let $\omega$ be its associated moment sequence, and let $\Omega$ be the reciprocal of $\omega$. Setting 
\[
h(t) := \frac{W(t)}{\left( \int_0^t W(s) \, ds \right)^2} \qquad 0<t<1,
\]
we have that the following norm of equivalence holds:
\[
\sum_{n\in \mathbb{Z}} \abs{\widehat{f}(n)}^2 \, \Omega(|n|) \asymp \int_{-\pi}^{\pi}\int_{-\pi}^{\pi} \abs{f(e^{it})-f(e^{is})}^2 h\left(\frac{|s-t|}{\pi}\right) ds dt, 
\]
for all $f\in \ell^2(\Omega)$ with zero average $\widehat{f}(0)=0$.
\end{thm}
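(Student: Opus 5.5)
The plan is to diagonalise both sides by Fourier series, which reduces the norm equivalence to a one-dimensional estimate for each frequency $n$. Substituting $s=t+u$ and applying Parseval in $t$ gives
\[
\int_{-\pi}^{\pi}\int_{-\pi}^{\pi}\abs{f(e^{it})-f(e^{is})}^2 h\left(\tfrac{|s-t|}{\pi}\right)ds\,dt = 2\pi\sum_{n\in\mathbb{Z}}\abs{\widehat f(n)}^2\, m(n),
\qquad m(n):=\int_{-\pi}^{\pi}\abs{1-e^{inu}}^2 h\left(\tfrac{|u|}{\pi}\right)du .
\]
Here $m(0)=0$, which is harmless because $\widehat f(0)=0$. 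Also $m(-n)=m(n)$. So it suffices to prove
\[
m(n)\asymp \Omega(n)=\frac{1}{\omega(n)}, \qquad n\geq 1 .
\]
Write $A(t)=\int_0^t W$. By \thref{LEM:MOMW}, which applies since $(C_3)$ gives the doubling hypothesis, $\omega(n)\asymp A(1/n)$. After rescaling $x=|u|/\pi$ and using $\abs{1-e^{i\theta}}^2\leq \min(4,\theta^2)$, the target becomes
\[
\int_0^1 \min(1,n^2x^2)\,h(x)\,dx \asymp \frac{1}{A(1/n)}.
\]

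\emph{Upper bound.} The key observation is that $h=W/A^2=-(1/A)'$.
\begin{itemize}
\item On $[1/n,1]$ we get $\int_{1/n}^1 h = 1/A(1/n)-1/A(1)\leq 1/A(1/n)$.
\item On $[0,1/n]$ I would bound $n^2\int_0^{1/n}x^2h(x)\,dx$ by integrating by parts, which gives $-n^2/\big(n^2A(1/n)\big)$ plus the term $2n^2\int_0^{1/n}x/A(x)\,dx$. The boundary term at $0$ vanishes because $A(x)\gtrsim x^{\beta}$ for some $\beta<2$. If $W$ is non-increasing, $A(x)\geq xW(x)\geq xW(1)$. If $W$ is non-decreasing, iterating $(C_3)$ gives $W(x)\gtrsim x^{\log_2 c}$, so $\beta=1+\log_2 c<2$.
\item For the remaining integral I split dyadically into blocks $[2^{-j-1}/n,2^{-j}/n]$. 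From $A(t)\leq 2cA(t/2)$ (shown in the proof of \thref{LEM:MOMW}) we get $A(2^{-j}/n)\geq (2c)^{-j}A(1/n)$. Hence
\[
n^2\int_0^{1/n}\frac{x}{A(x)}\,dx\lesssim \sum_{j\geq 0}\frac{4^{-j}}{(2c)^{-j}A(1/n)}=\frac{1}{A(1/n)}\sum_{j\ge0}\Big(\frac{c}{2}\Big)^j ,
\]
and the series converges precisely because $c<2$.
\end{itemize}
I expect this step, convergence of the dyadic sum near $0$, to be the main obstacle. It is exactly where the quantitative restriction $c<2$ in $(C_3)$, and not just doubling, is needed.

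\emph{Lower bound.} Pointwise, $\abs{1-e^{inu}}^2$ is not bounded below on $[1/n,1]$, so I would average over dyadic blocks $I_k=[2^k/n,2^{k+1}/n]$ with $0\leq k\lesssim\log_2 n$. On each $I_k$ the mean of $\sin^2(n\pi x/2)$ is bounded below by an absolute constant. Using $(C_1)$ and $(C_3)$, $W$ is comparable to a constant on $I_k$ with constants depending only on $W$, and by monotonicity of $A$ together with $A(t)\leq 2cA(t/2)$ the same holds for $A$. Hence $h$ is comparable to a constant on $I_k$, and so
\[
\int_{I_k}\abs{1-e^{in\pi x}}^2h(x)\,dx\gtrsim\int_{I_k}h .
\]
Summing over $k$ gives
\[
m(n)\gtrsim \int_{1/n}^{1/2}h = \frac{1}{A(1/n)}-\frac{1}{A(1/2)} \gtrsim \frac{1}{A(1/n)} \quad\text{for large } n,
\]
since $A(1/n)\to 0$. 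Finitely many small $n$ are handled trivially, because then both $m(n)$ and $\Omega(n)$ are finite and positive. Combining the upper and lower bounds with $\omega(n)\asymp A(1/n)$ yields $m(n)\asymp\Omega(n)$, and Parseval gives the claimed norm equivalence.
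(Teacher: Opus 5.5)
Your Parseval reduction and your upper bound are sound. The integration by parts followed by the dyadic estimate via $A(t)\le 2cA(t/2)$ with $c<2$ is the paper's argument for non-decreasing $W$. It also covers non-increasing $W$, where $(C_3)$ holds with $c=1$, so you legitimately avoid the paper's separate concavity argument there. One small slip: for non-increasing $W$, use $A(x)\ge xW(x_0)$ for $x\le x_0$ with $W(x_0)>0$, since $W(1)$ may vanish.

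The gap is in the lower bound, in the claim that $(C_1)$ and $(C_3)$ make $W$, and hence $h$, comparable to a constant on every dyadic block $I_k=[a,2a]$. For non-decreasing $W$ this is right, since $W(a)\le W\le W(2a)\le cW(a)$. For non-increasing $W$ you need $W(a)\lesssim W(2a)$, and $(C_3)$ gives nothing of the sort, because it holds with $c=1$ for every non-increasing $W$. Take for instance a smoothed staircase with value $2^{2^j}$ on $(t_{j+1},t_j]$, $t_j=2^{-2^j}/j^2$, and jumps smoothed over $[t_{j+1},(1+\eta)t_{j+1}]$. It is non-increasing, integrable and non-Dini, yet drops by the unbounded factor $2^{2^j}$ inside a single dyadic interval. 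The per-block bound $\int_{I_k}\abs{1-e^{in\pi x}}^2h\,dx\gtrsim\int_{I_k}h$ can then genuinely fail. For $k\ge 1$ the factor $\abs{1-e^{in\pi x}}^2$ vanishes at the left endpoint $2^k/n$, and $h$ may carry most of its $I_k$-mass right there.

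The estimate $m(n)\gtrsim 1/A(1/n)$ is still true, but in this case it must come from monotonicity rather than local comparability. If $W$ is non-increasing, then $h=W/A^2$ is non-increasing. So on each quarter-period where $\sin^2(n\pi x/2)\ge 1/2$, the mass of $h$ is at least a quarter of its mass on the next four quarter-periods. Summing gives $m(n)\gtrsim\int_{1/(2n)}^1 h$; this is Step 1 of the paper's lemma for non-increasing weights. In the non-decreasing case your block argument is valid, though the paper's route is shorter. Convexity of $A$ gives $A(1/(2n))\le A(1/n)/2$, and $\sin^2(n\pi x/2)\ge 1/2$ on $[1/(2n),1/n]$. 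That single interval already yields $\int_{1/(2n)}^{1/n}h\ge 1/A(1/n)$.
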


Our Douglas-type representation formula holds for a wide range of moment sequences $\{\omega(n)\}_n$ corresponding to radial weights $W$. Below, we have gathered a list of standard weights for which one can compare and easily verify the relationships of the various quantities. Notably, our theory essentially covers the regime of all moment sequences $\omega(n)$ that range from decaying arbitrarily slowly, to being close to the threshold of being summable.

\[
\renewcommand{\arraystretch}{2.5}
\begin{array}{c|c|c|c}
\boldsymbol{W(t)}
&
\boldsymbol{\omega(n)}
&
\boldsymbol{h(t)}
\\
\hline
\displaystyle
W(t)\asymp
\frac{1}{
t\log(e^{e}/t)
\bigl(\log\log(e^{e}/t)\bigr)^2
}
&
\displaystyle
\omega(n)\asymp
\frac{1}{\log\log n}
&
\displaystyle
h(t)\asymp
\frac{1}{t\log(e^{e}/t)}
\\
\hline
\displaystyle
W(t)\asymp
\frac{1}{t\bigl(\log(e/t)\bigr)^2}
&
\displaystyle
\omega(n)\asymp
\frac{1}{\log n}
&
\displaystyle
h(t)\asymp
\frac{1}{t}
\\
\hline
\displaystyle
W(t)\asymp t^{\alpha-1},
\quad 0\leq \alpha <1
&
\displaystyle
\omega(n)\asymp n^{-\alpha}
&
\displaystyle
h(t)\asymp t^{-1-\alpha}
\\
\hline
\displaystyle
W(t)\asymp
\frac{1}{\log(e/t)}
&
\displaystyle
\omega(n)\asymp
\frac{1}{n\log n}
&
\displaystyle
h(t)\asymp
\frac{\log(e/t)}{t^2}
\end{array}
\]

The remaining part of this subsection will thus be devoted to the proof of \thref{THM:DOUGLASFORMULA}, and some relevant properties of spaces $\ell^2(\Omega)$ that exhibit a Douglas-type representation formula as above.

Now let $h$ be a non-negative function, and note that by making the change of variables $x=s-t$ and $y=t$, we have by Parseval's Theorem that 

\begin{multline*}
\int_{-\pi}^{\pi} \int_{-\pi}^{\pi} \abs{f(e^{it})-f(e^{is})}^2 h\left(\frac{|s-t|}{\pi}\right) ds dt = \sum_{n\neq 0 } \abs{\widehat{f}(n)}^2 \int_{-\pi}^{\pi} \abs{e^{inx}-1}^2 h\left(\frac{|x|}{\pi}\right) dx \\
= 8\sum_{n\neq 0}\abs{\widehat{f}(n)}^2 \int_0^{\pi} \sin^2(nx/2) h(x/\pi) dx.
\end{multline*}
In view of this expansion, the proof of \thref{THM:DOUGLASFORMULA} readily reduces to establishing the following asymptotics.

\begin{prop}\thlabel{PROP:hasymp}  Let $W$ be a weight satisfying the conditions $(C_1)-(C_3)$ and let $\omega$ be its associated moment sequence, and let $\Omega$ be the reciprocal of $\omega$. Setting 
\[
h(t) := \frac{W(t)}{\left( \int_0^t W(s) \, ds \right)^2} \qquad 0<t<1,
\]
the following asymptotic holds 
\[
\Omega(n) \asymp \int_0^{\pi} \sin^2(nt/2) h(t/\pi) dt, \qquad n\neq 0.
\]
\end{prop}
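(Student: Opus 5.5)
Throughout we write $A(t):=\int_0^t W(s)\,ds$. The key observation is that $h=A'/A^2=-(1/A)'$, so $h$ has the explicit primitive $-1/A$. By \thref{LEM:MOMW} we have $\omega(n)\asymp A(1/n)$, so the claim is equivalent to
\[
I(n):=\int_0^{1}\sin^2(n\pi u/2)\,h(u)\,du \asymp \frac{1}{A(1/n)},
\]
after substituting $u=t/\pi$. I would split $I(n)$ at $u=1/n$ and bound the upper and lower parts separately.

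\emph{Upper bound.} On $[1/n,1]$ use $\sin^2\le 1$ and the primitive:
\[
\int_{1/n}^1 h = \frac{1}{A(1/n)}-\frac{1}{A(1)}\le \frac{1}{A(1/n)}.
\]
On $[0,1/n]$ use $\sin^2(n\pi u/2)\lesssim n^2u^2$ and integrate by parts against $-(1/A)'$. This yields the boundary term $-n^2u^2/A(u)\big|_0^{1/n}\le 0$ plus $2n^2\int_0^{1/n}u/A(u)\,du$. Here $(C_3)$ is used quantitatively. As in the proof of \thref{LEM:MOMW}, $A(t)\le 2cA(t/2)$. Iterating this gives $A(u)\gtrsim (nu)^{\beta}A(1/n)$ for $u\le 1/n$, where $\beta=\log_2(2c)<2$ precisely because $c<2$. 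Consequently $u^2/A(u)\lesssim u^{2-\beta}n^{-\beta}/A(1/n)\to 0$ as $u\to 0$, so the boundary term at $0$ vanishes. Moreover
\[
n^2\int_0^{1/n}\frac{u}{A(u)}\,du\lesssim \frac{n^{2-\beta}}{A(1/n)}\int_0^{1/n}u^{1-\beta}\,du\asymp\frac{1}{A(1/n)}.
\]
I expect this to be the main point where the threshold $c<2$ is genuinely needed.

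\emph{Lower bound.} This splits by $(C_1)$. If $W$ is non-decreasing, then $A(t/2)\le A(t)/2$. On $[1/(2n),1/n]$ we have $\sin^2\ge 1/2$, so
\[
I(n)\ge \tfrac12\Big(\frac{1}{A(1/(2n))}-\frac{1}{A(1/n)}\Big)\ge \frac{1}{2A(1/n)}.
\]
If $W$ is non-increasing, then $h=W/A^2$ is non-increasing. The function $\sin^2(n\pi u/2)$ has period $2/n$, and on each period the set where $\sin^2\ge 1/2$ comes \emph{before} a shifted copy of the set where $\sin^2<1/2$. Comparing these two pieces by monotonicity of $h$ gives
\[
\int_{1/(2n)}^1 \sin^2(n\pi u/2)\,h(u)\,du\gtrsim\int_{1/(2n)}^{1-2/n}h\ \ge\ \frac{1}{A(1/(2n))}-\frac{1}{A(1/2)}\ \ge\ \frac{1}{A(1/n)}-\frac{1}{A(1/2)}.
\]
By $(C_2)$ (or simply $A(1/n)\to0$), $1/A(1/n)\to\infty$, so the last expression is $\gtrsim 1/A(1/n)$ for $n$ large. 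For the finitely many small $n$, both sides are positive and finite, and we adjust the constants.

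Combining the upper and lower bounds with \thref{LEM:MOMW} gives $\Omega(n)\asymp I(n)$, which is the statement.
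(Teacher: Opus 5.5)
Your proof is correct and follows essentially the paper's route: the split at $1/n$, the primitive $-1/A$ of $h$, the integration by parts on $[0,1/n]$, and the two monotonicity-based lower bounds are the same as in \thref{LEM:Wdec} and \thref{LEM:Winc}, combined with \thref{LEM:MOMW}. The one real difference is that you prove the upper bound once for both monotonicity cases, using the power bound $A(u)\gtrsim (nu)^{\beta}A(1/n)$ with $\beta=\log_2(2c)<2$; this handles the boundary term at $0$ and the integral $n^2\int_0^{1/n}u/A(u)\,du$ together, whereas the paper uses concavity for non-increasing $W$ (your bound with $c=1$, $\beta=1$), a dyadic sum for non-decreasing $W$, and the separate fact $W(t)/t\to+\infty$.
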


The proof will be divided into the distinct cases that can occur under the assumption $(C_1)$, namely when $W$ is assumed to be non-increasing, and when $W$ is assumed to be non-decreasing. As it is a bit long, we have decided to divide it into lemmas, treating the two different cases of $W$, separately. A common observation in both cases is that the assumptions $(C_1)-(C_3)$ imply that 
\[
\lim_{t\to 0+} \frac{W(t)}{t} =+ \infty.
\]
Indeed, the claim is trivial if $W$ is non-increasing. If $W$ is non-decreasing, then \thref{LEM:MOMW} implies that
\[
\frac{1}{n^{1+\alpha}} \lesssim \omega(n) \asymp \int_0^{1/n} W(t) dt \leq W(1/n) 1/n,
\]
for some $0<\alpha<1$, from which the claim easily follows.

\begin{lemma}\thlabel{LEM:Wdec} Let $W$ be a weight which is non-increasing and satisfies
\[
\lim_{t\to 0+} \frac{W(t)}{t}=+ \infty.
\]
Then the following comparability holds:
\[
\int_0^{\pi} \sin^2(nt/2) h(t/\pi) dt \asymp \frac{1}{\int_0^{1/n}W(t)dt}, \qquad n=1,2,\dots
\]
with
\[
h(t) = \frac{W(t)}{\left(\int_0^t W(s)ds\right)^2}
\]
\end{lemma}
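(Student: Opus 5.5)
Set $A(t):=\int_0^t W(s)\,ds$. The key observation is that $h = W/A^2 = -\frac{d}{dt}\bigl(1/A(t)\bigr)$, so $h$ has the explicit antiderivative $-1/A$. After the substitution $u=t/\pi$, the integral becomes $\pi\int_0^1 \sin^2(n\pi u/2)\,h(u)\,du$. We then split at the scale $u=1/n$ and bound the two pieces separately. We may also assume $n$ is large: for finitely many $n$ both sides are positive and finite. Finiteness of the left-hand side for fixed $n$ is exactly what the near-zero estimate below provides.

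\emph{Upper bound.} On $[1/n,1]$ we use $\sin^2\le 1$ and integrate the exact derivative:
\[
\int_{1/n}^1 h(u)\,du = \frac{1}{A(1/n)} - \frac{1}{A(1)} \le \frac{1}{A(1/n)}.
\]
On $[0,1/n]$ we use $\sin^2(n\pi u/2)\lesssim n^2u^2$. Since $W$ is non-increasing, $A(u)\ge uW(u)$, hence $h(u)\le W(u)/(u^2W(u)^2)=1/(u^2W(u))$. This gives
\[
\int_0^{1/n} n^2u^2 h(u)\,du \le n^2\int_0^{1/n}\frac{du}{W(u)} \le \frac{n}{W(1/n)},
\]
where the last step uses that $W$ is non-increasing, so $1/W$ is non-decreasing. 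It remains to compare $n/W(1/n)$ with $1/A(1/n)$, i.e.\ to show $A(1/n)\lesssim W(1/n)/n$. This is a reverse inequality to $A(t)\ge tW(t)$, and it is the main obstacle: for a non-increasing $W$ it can fail (e.g.\ $W(t)=1/(t\log^2(1/t))$, where $A(t)/(tW(t))\to\infty$).

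So the plan is to avoid that comparison and instead bound the near-zero piece directly via $h=-(1/A)'$, integrating by parts:
\[
\int_0^{1/n} u^2 h(u)\,du = \Bigl[-\frac{u^2}{A(u)}\Bigr]_0^{1/n} + \int_0^{1/n}\frac{2u}{A(u)}\,du .
\]
Since $W$ is non-increasing, $A$ is concave, so $A(u)\ge nu\,A(1/n)$ for $u\le 1/n$. Therefore
\[
\int_0^{1/n}\frac{2u}{A(u)}\,du \le \frac{2}{nA(1/n)}\cdot\frac1n .
\]
Also $u^2/A(u)\le u/W(u)\to 0$ as $u\to 0$ by the hypothesis $W(t)/t\to\infty$ (indeed $u/W(u)\to 0$ there), so the boundary term at $0$ vanishes. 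The boundary term at $1/n$ is nonpositive and can be dropped. Multiplying by $n^2$ gives a contribution $\lesssim 1/A(1/n)$, completing the upper bound.

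\emph{Lower bound.} We restrict to $u\in[1/n,2/n]$, where $\sin^2(n\pi u/2)\gtrsim 1$ except near $u=2/n$; to be safe, use $[1/n,3/(2n)]$, where $n\pi u/2\in[\pi/2,3\pi/4]$ and so $\sin^2\ge 1/2$. The integral there is at least
\[
\frac12\Bigl(\frac{1}{A(1/n)}-\frac{1}{A(3/(2n))}\Bigr).
\]
Concavity of $A$ and monotonicity of $W$ give
\[
A(3/(2n))\ge A(1/n)+\tfrac{1}{2n}W(3/(2n)),
\]
which does not immediately give a fixed factor. Instead we use concavity in the form $A(3/(2n))\le \tfrac32 A(1/n)$, valid since $A(0)=0$ and $A$ is concave. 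Then
\[
\frac{1}{A(1/n)}-\frac{1}{A(3/(2n))}\ge \frac{1}{3A(1/n)},
\]
and the lower bound follows.
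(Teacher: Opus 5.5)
Your upper bound is correct and follows the paper's argument essentially step for step: exact integration of $h=-(1/A)'$ on $[1/n,1]$, and integration by parts on $[0,1/n]$ using $A(u)\ge nu\,A(1/n)$ and $u^2/A(u)\le u/W(u)\to 0$. The lower bound, however, has a genuine gap. From $A(3/(2n))\le \frac32 A(1/n)$ you get $1/A(3/(2n))\ge 2/(3A(1/n))$, and hence
\[
\frac{1}{A(1/n)}-\frac{1}{A(3/(2n))}\le \frac{1}{3A(1/n)}.
\]
This is an upper bound, not the lower bound you claim. A lower bound would need $A(3/(2n))\ge c\,A(1/n)$ for some $c>1$, and concavity gives the opposite direction.

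Nor can a single window be rescued. Take $W(t)=1/(t\log^2(e^2/t))$, which is non-increasing on $(0,1]$ with $W(t)/t\to\infty$ (essentially the example you cite). Then $A(t)=1/\log(e^2/t)$ and $h(t)=1/t$, so $\int_{1/n}^{3/(2n)}h(u)\,du=\log(3/2)$ for every $n$, whereas $1/A(1/n)=\log(e^2n)\to\infty$. The mass of $h$ is spread over all scales between $1/n$ and $1$, so no window of length $\asymp 1/n$ can carry the estimate. Your single-window argument is exactly what works in the non-decreasing case, where convexity gives $A(1/(2n))\le \frac12 A(1/n)$, the right direction.

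The missing idea is to use that $h=W/A^2$ is non-increasing ($W$ non-increasing, $A$ non-decreasing) and to collect the whole range $[1/(2n),1]$. Cut this range into consecutive intervals of length $1/(2n)$. On every fourth one, starting with $[1/(2n),1/n]$, the phase $n\pi u/2$ lies in $[\pi/4+m\pi,\pi/2+m\pi]$, so $\sin^2(n\pi u/2)\ge\frac12$. By monotonicity of $h$, the integral of $h$ over that interval is at least a quarter of its integral over the block of four consecutive intervals it begins. Summing over blocks gives
\[
\int_0^1\sin^2(n\pi u/2)\,h(u)\,du\ \ge\ \frac18\int_{1/(2n)}^1 h(u)\,du=\frac18\Bigl(\frac{1}{A(1/(2n))}-\frac{1}{A(1)}\Bigr)\gtrsim\frac{1}{A(1/n)}
\]
for large $n$, since $A(1/n)\to0$. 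This is how the paper obtains the lower bound.
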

 
\begin{proof}
For simplicity, set
\[
A(t) = \int_0^t W(s)ds,
\]
and note that the assumption on $W$ implies that $A$ is non-decreasing and concave, and thus $-1/A$ is also non-decreasing and concave. This implies that
\[
h(t) = \frac{d}{dt}\left( - \frac{1}{A(t)}\right)
\]
is positive and non-increasing. 
\proofpart{1}{Lower bound:}
Partition the interval $[\frac{\pi}{2n},\pi]$ into the arcs
\[
J_k=\left(\frac{k\pi}{2n},\frac{(k+1)\pi}{2n}\right],
\qquad k=1,\ldots,2n-1.
\]
Observe that on every fourth arc, we have
\[
\sin^2(nt/2)\geq\frac{1}{2}, \qquad t\in J_{1+4k}.
\]
Since $h(t/\pi)$ is non-increasing, the first arc $J_{1+4k}$ contains at least $\frac{1}{4}$ of the mass of $4$ consecutive arcs to the right of $J_{1+4k}$, we get by summing up that 
\[
\begin{aligned}
\int_0^\pi\sin^2(nt/2)h(t/\pi)\,dt
&\geq\frac{1}{8}\int_{\frac{\pi}{2n}}^\pi h(t/\pi)\,dt\\
&=\frac{\pi}{8}
\left(
\frac1{A(1/2n)}-\frac1{A(1)}
\right)\\
&\gtrsim\frac1{A(1/n)}.
\end{aligned}
\]

\proofpart{2}{Upper bound:}
For the upper bound, note that
\[
\int_{0}^{\pi} \sin^2(nt/2) h(t/\pi) dt \leq n^2\int_{0}^{\pi/n}  t^2 h(t/\pi) dt + \int_{\pi/n}^{\pi} h(t/\pi) dt.
\]
The second integral trivially satisfies the required upper bound, as seen from
\[
\int_{\pi/n}^{\pi} h(t/\pi) dt = \pi \int_{1/n}^1 h(t) dt = \pi \left( \frac{1}{A(1/n)}- \frac{1}{A(1)} \right) \leq \frac{\pi}{A(1/n)}.
\]
Upon changing variables, it remains only to show that
\[
n^2 \int_0^{1/n} t^2 h(t)dt \lesssim \frac{1}{A(1/n)}.
\]
To this end, note that an integration by parts gives
\[
\int_0^{1/n}t^2 h(t) dt = - \frac{1}{n^2 A(1/n)} +  \lim_{\varepsilon\to 0+}\frac{\varepsilon^2}{A(\varepsilon)} + 2 \int_0^{1/n}\frac{t}{A(t)} dt.
\]
Since $W$ is non-increasing, the assumption $W(t)/t \to +\infty$ as $t\to 0+$ implies that the limiting boundary term vanishes
\[
0\leq \frac{\varepsilon^2}{A(\varepsilon)}\leq \frac{\varepsilon}{W(\varepsilon)} \to 0, \qquad \varepsilon\to 0+.
\]
Furthermore, the concavity of $A$ implies that 
\[
A(1/n) \leq \frac{A(t)}{tn}, \qquad 0<t< \frac{1}{n},
\]
and hence 
\[
\int_0^{1/n}\frac{t}{A(t)} dt \leq \frac{1}{A(1/n)} \frac{1}{n} \int_0^{1/n}dt \leq \frac{1}{A(1/n)} \frac{1}{n^2}.
\]
Combining these estimates, we arrive at
\[
\int_0^{\pi} \sin^2(nt/2) h(t/\pi)dt \lesssim \frac{1}{A(1/n)}.
\]
This concludes the proof of the lemma.
\end{proof}

We now proceed with proving the same asymptotics, but for $W$ which are non-decreasing but do not grow too rapidly.

\begin{lemma}\thlabel{LEM:Winc} Let $W$ be a non-decreasing weight, which satisfies $(C_3)$ and $W(t)/t \to +\infty$ as $t\to 0+$.
Then we have
\[
\int_0^\pi \sin^2(nt/2) h(t/\pi) dt \asymp \frac{1}{A(1/n)},
\]
with $h(t)=\frac{d}{dt}\left(-\frac{1}{A(t)}\right)$.
\end{lemma}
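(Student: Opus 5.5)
Throughout, $A(t)=\int_0^t W(s)\,ds$, so that $h=W/A^2=\frac{d}{dt}(-1/A)$. For non-decreasing $W$ the monotonicity of $h$ used in \thref{LEM:Wdec} is no longer available; instead we use the doubling hypothesis $(C_3)$. Since $W$ is non-decreasing, $A(t)\geq \int_{t/2}^t W\geq \tfrac{t}{2}W(t/2)$ and $A(t)\leq tW(t)$. Combined with $(C_3)$ this gives
\[
A(t)\asymp tW(t),
\qquad\text{hence}\qquad
h(t)\asymp \frac{1}{t\,A(t)}\asymp \frac{1}{t^2W(t)}.
\]
Iterating $(C_3)$ with $c<2$ yields $W(t)\geq c^{-k}W(2^kt)$. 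Therefore $A(2^kt)\lesssim (2c)^kA(t)$ on the one hand, while on the other hand $A(t)\leq 2^{-k}A(2^kt)$ because $A$ is convex ($W$ non-decreasing).

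\emph{Upper bound.} Split at $\pi/n$ as in \thref{LEM:Wdec}. The tail is bounded directly:
\[
\int_{\pi/n}^{\pi}h(t/\pi)\,dt=\pi\bigl(1/A(1/n)-1/A(1)\bigr)\leq \frac{\pi}{A(1/n)}.
\]
For the inner piece it suffices to show that $n^2\int_0^{1/n}t^2h(t)\,dt\lesssim 1/A(1/n)$. Using $t^2h(t)\asymp 1/W(t)$, the inner integral becomes $n^2\int_0^{1/n}dt/W(t)$. Decompose $(0,1/n]$ dyadically into intervals $(2^{-k-1}/n,2^{-k}/n]$. By $(C_3)$, on the $k$-th piece $1/W(t)\leq c^{k+1}/W(1/n)$, so that piece contributes at most $2^{-k-1}c^{k+1}/(nW(1/n))$. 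Since $c<2$ the geometric series converges, giving
\[
n^2\int_0^{1/n}\frac{dt}{W(t)}\lesssim \frac{n}{W(1/n)}\asymp \frac{1}{A(1/n)}.
\]
This is the key place where the quantitative restriction $c<2$ in $(C_3)$ is used. Alternatively, one can integrate by parts as before; the boundary term $\varepsilon^2/A(\varepsilon)\leq 2\varepsilon/W(\varepsilon/2)\to0$ vanishes by the hypothesis $W(t)/t\to\infty$, and the remaining integral $\int_0^{1/n}t/A(t)\,dt$ is handled by the same dyadic estimate.

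\emph{Lower bound.} We avoid monotonicity of $h$ and just use one window near $t\sim 1/n$. Take the interval $J=[\pi/(2n),3\pi/(2n)]$, on which $\sin^2(nt/2)\geq 1/2$ on a subinterval of comparable length, say $[2\pi/(3n),4\pi/(3n)]$. There, by doubling, $h(t/\pi)\asymp h(1/n)\asymp n/A(1/n)$. The window has length $\asymp 1/n$, so
\[
\int_0^\pi\sin^2(nt/2)\,h(t/\pi)\,dt\gtrsim \frac{1}{A(1/n)}.
\]
For this we need the comparability $h(s)\asymp h(1/n)$ for $s\in[2/(3n),4/(3n)]$. This follows from $h\asymp 1/(tA(t))$ together with the doubling of $A$ and the monotonicity of $A$. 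Handling small $n$ only affects constants.

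The main obstacle is the inner integral in the upper bound. A naive estimate would give $n^2\int_0^{1/n}dt/W$ infinite if $W$ decayed too fast at $0$, and it is precisely $c<2$ that makes $\sum_k (c/2)^k$ converge. Everything else is bookkeeping with the comparabilities $A(t)\asymp tW(t)$ and $h\asymp 1/(tA)$.
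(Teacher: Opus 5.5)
Your proof is correct. It shares the paper's skeleton: split at $\pi/n$, bound the tail through the primitive $-1/A$ of $h$, and sum the inner piece dyadically, which is where $c<2$ enters. The mechanism, however, is different.

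The paper never estimates $h$ pointwise. For the inner piece it integrates by parts, $\int_0^{1/n}t^2h(t)\,dt=-\frac{1}{n^2A(1/n)}+2\int_0^{1/n}\frac{t}{A(t)}\,dt$. This needs the hypothesis $W(t)/t\to+\infty$ to kill the boundary term $\varepsilon^2/A(\varepsilon)$, and it then sums dyadically using $A(t)\leq 2cA(t/2)$. For the lower bound it uses the window $[\pi/(2n),\pi/n]$ and the exact identity $\int_{1/(2n)}^{1/n}h=\frac{1}{A(1/(2n))}-\frac{1}{A(1/n)}$. Only convexity is needed there, via $A(1/(2n))\leq \tfrac{1}{2}A(1/n)$, with no doubling at all.

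Your key device is instead the pointwise comparability $A(t)\asymp tW(t)$. It gives $h(t)\asymp 1/(tA(t))\asymp 1/(t^2W(t))$ and turns both bounds into pointwise estimates. This yields a shorter, more transparent argument that bypasses integration by parts and never uses $W(t)/t\to+\infty$. For non-decreasing $W\not\equiv 0$ that hypothesis is in any case a consequence of $(C_3)$ with $c<2$. The cost is that you invoke doubling in the lower bound as well.

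What the paper's integral-identity route buys is uniformity with \thref{LEM:Wdec}. For non-increasing weights such as $W(t)\asymp 1/(t\log^2(e/t))$, one has $A(t)\asymp 1/\log(e/t)$ while $tW(t)\asymp 1/\log^2(e/t)$. So your comparability is tied to the non-decreasing case, which is exactly the case of this lemma.
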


\begin{proof}
This time, since $W$ is non-decreasing, the function $A$ is convex. As $A(0)=0$, it follows that
\[
A(t/2)\leq \frac{1}{2}A(t).
\]
\proofpart{1}{Lower bound:}
For the lower bound, note that we have
\[
\sin^2(nt/2)\geq \frac{1}{2},
\qquad
\frac{\pi}{2n}\leq t\leq \frac{\pi}{n}.
\]
Using this, we get
\[
\begin{aligned}
\int_0^\pi \sin^2(nt/2)h(t/\pi)\,dt
&\geq
\frac{1}{2}
\int_{\pi/(2n)}^{\pi/n}h(t/\pi)\,dt\\
&=
\frac{\pi}{2}
\int_{1/(2n)}^{1/n}h(t)\,dt\\
&=
\frac{\pi}{2}
\left(
\frac{1}{A(1/(2n))}
-
\frac{1}{A(1/n)}
\right)
&\geq
\frac{\pi}{2} \frac{1}{A(1/n)},
\end{aligned}
\]
where we in the last step utilized the convexity of $A$ as:
\[
A(1/(2n))\leq \frac{1}{2}A(1/n).
\]
This shows that
\[
\int_0^\pi \sin^2(nt/2)h(t/\pi)\,dt
\gtrsim
\frac{1}{A(1/n)}.
\]
\proofpart{2}{Upper bound:}
For the upper bound, we again use the pointwise estimate
\[
\sin^2(nt/2)\leq \min(n^2t^2,1)
\]
and a simple change of variable to obtain
\[
\int_0^\pi \sin^2(nt/2)h(t/\pi)dt
\lesssim
n^2\int_0^{1/n}t^2h(t)\,dt
+
\int_{1/n}^1h(t)\,dt.
\]
The second term is easily controlled
\[
\int_{1/n}^1h(t)\,dt
=
\frac{1}{A(1/n)}
-
\frac{1}{A(1)}
\leq
\frac{1}{A(1/n)}.
\]
For the first term, we again carry out an integration by parts 
\[
\int_0^{1/n}t^2h(t)\,dt
=
-\frac{1}{n^2A(1/n)}
+
\lim_{\varepsilon\to0+}
\frac{\varepsilon^2}{A(\varepsilon)}
+
2\int_0^{1/n}\frac{t}{A(t)}\,dt.
\]
Now the assumption $W(t)/t\to +\infty$ as $t\to 0+$ again ensures that the boundary term vanishes. Indeed, since $W$ is non-decreasing,
\[
A(\varepsilon)
\geq
\int_{\varepsilon/2}^{\varepsilon}W(s)\,ds
\geq
\frac{\varepsilon}{2}W(\varepsilon/2),
\]
and hence
\[
0\leq
\frac{\varepsilon^2}{A(\varepsilon)}
\leq
\frac{2\varepsilon}{W(\varepsilon/2)}
=
4\frac{\varepsilon/2}{W(\varepsilon/2)}
\longrightarrow 0.
\]
We therefore have the estimate
\[
n^2\int_0^{1/n}t^2h(t)\,dt
\leq
2n^2\int_0^{1/n}\frac{t}{A(t)}\,dt.
\]

We split the last integral into dyadic intervals:
\[
\begin{aligned}
n^2\int_0^{1/n}\frac{t}{A(t)}\,dt
&=
n^2\sum_{k\geq0}
\int_{2^{-(k+1)}/n}^{2^{-k}/n}
\frac{t}{A(t)}\,dt\\
&\lesssim
\sum_{k\geq0}
\frac{2^{-2k}}{A(2^{-(k+1)}/n)}.
\end{aligned}
\]
Observe that the doubling assumption $(C_3)$ on $W$  implies
\[
A(t) =
2\int_0^{t/2}W(2s)\,ds \leq
2C\int_0^{t/2}W(s)\,ds
=
2C A(t/2).
\]
Iterating this inequality gives
\[
A(1/n)
\leq
(2C)^{k+1}
A(2^{-(k+1)}/n),
\]
and as a consequence, we obtain
\[
\begin{aligned}
n^2\int_0^{1/n}\frac{t}{A(t)}\,dt
&\lesssim
\frac{1}{A(1/n)}
\sum_{k\geq0}
2^{-2k}(2C)^{k+1}\\
&\lesssim
\frac{1}{A(1/n)}
\sum_{k\geq0}
\left(\frac{C}{2}\right)^k\\
&\lesssim
\frac{1}{A(1/n)},
\end{aligned}
\]
where the last series converges because $C<2$. This proves the upper bound and completes the proof.
\end{proof}

We may now summarize our discussions and establish our main intention in this subsection.

\begin{proof}[Proof of \thref{PROP:hasymp}] 
Note that both \thref{LEM:Wdec} and \thref{LEM:Winc} apply under the assumptions $(C_1)-(C_3)$, which yields the asymptotics:
\[
\int_0^{\pi} \sin^2(nt/2) h(t/\pi) dt \asymp \frac{1}{\int_0^{1/n}W(t) \, dt}, \qquad n=1,2,3,\dots
\]
with 
\[
h(t) = \frac{W(t)}{\left( \int_0^t W(s) \, ds \right)^2}, \qquad 0<t<1.
\]
However, appealing to \thref{LEM:MOMW} we also get that the above quantities are comparable to $\Omega(n)$, and thus the proof is complete.
\end{proof}

We have thus established the Douglas-type representation formula for our considered family of spaces $\ell^2(\Omega)$. As a consequence of this formula, we prove useful boundedness statements for two nonlinear operations on $\ell^2(\Omega)$.

\begin{lemma}\thlabel{Lemma: max_Dirichlet}
    Suppose $\ell^2(\Omega)$ admits a Douglas-type norm representation formula as in \thref{THM:DOUGLASFORMULA}. 
    Let $f, g$ be two real-valued functions in $\ell^2(\Omega)$. Then for almost all $(\zeta, \xi) \in \T^2$, we have 
    \[
    |M_{f,g}(\zeta) - M_{f,g}(\xi)|^2 \leq \max(|f(\zeta)-f(\xi)|^2, |g(\zeta)-g(\xi)|^2),
    \]
    where $M_{f,g}= \max(f,g)$ is defined almost everywhere on $\T$. Thus
    \[
    \|M_{f,g}\|^2_{\ell^2(\Omega)}  \lesssim \|f\|^2_{\ell^2(\Omega)} + \|g\|^2_{\ell^2(\Omega)}.
    \]
    In particular, for such functions, $f, g \in \ell^2(\Omega) \implies \max(f,g) \in \ell^2(\Omega)$.
    
\end{lemma}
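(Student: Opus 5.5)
The proof will be a pointwise inequality followed by an application of the Douglas formula of \thref{THM:DOUGLASFORMULA}. We first fix representatives of $f$ and $g$, so that both are defined at every point outside a null set $N\subset\T$, and we take $M_{f,g}=\max(f,g)$ pointwise there. For $\zeta,\xi\notin N$ we then check the inequality
\[
|\max(a,b)-\max(c,d)|\leq \max(|a-c|,|b-d|), \qquad a,b,c,d\in\mathbb{R},
\]
with $a=f(\zeta)$, $b=g(\zeta)$, $c=f(\xi)$, $d=g(\xi)$. This is elementary. By symmetry we may assume $\max(a,b)\geq\max(c,d)$. If $\max(a,b)=a$, then $0\leq a-\max(c,d)\leq a-c\leq |a-c|$. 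The case $\max(a,b)=b$ is the same, with $d$ in place of $c$. Squaring gives the claimed pointwise bound for all $(\zeta,\xi)\in(\T\setminus N)^2$, which has full measure in $\T^2$.

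Next we bound the $\max$ by a sum, $\max(x^2,y^2)\leq x^2+y^2$. We insert the pointwise inequality into the double integral
\[
\int_{-\pi}^{\pi}\int_{-\pi}^{\pi}\abs{M_{f,g}(e^{it})-M_{f,g}(e^{is})}^2 h\left(\frac{|s-t|}{\pi}\right)ds\,dt,
\]
which is then at most the sum of the corresponding integrals for $f$ and for $g$.

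The one point needing care is that \thref{THM:DOUGLASFORMULA} is stated only for mean-zero functions. We handle this by writing $f=\widehat{f}(0)+f_0$ with $\widehat{f}(0)=0$ for $f_0$. Constants do not change the difference integral, and $\abs{\widehat f(0)}^2\Omega(0)\leq\norm{f}^2_{\ell^2(\Omega)}$. The Douglas formula therefore gives, for every $u\in L^2(\T)$,
\[
\norm{u}^2_{\ell^2(\Omega)}\asymp \abs{\widehat u(0)}^2 + \iint \abs{u(e^{it})-u(e^{is})}^2 h\left(\frac{|s-t|}{\pi}\right)ds\,dt .
\]
This holds with the convention that if the right-hand side is finite then $u\in\ell^2(\Omega)$. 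That convention is justified because Parseval converts the double integral into $\sum_{n\neq0}\abs{\widehat u(n)}^2\cdot 8\int_0^\pi\sin^2(nx/2)h(x/\pi)dx$, and \thref{PROP:hasymp} makes these weights comparable to $\Omega(|n|)$. This step, extending the Douglas formula from $\ell^2(\Omega)$ to all of $L^2$, is the only one requiring any thought.

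Finally, $M_{f,g}\in L^2$ because $|M_{f,g}|\leq|f|+|g|$, and $\abs{\widehat{M_{f,g}}(0)}\leq\norm{f}_{L^2}+\norm{g}_{L^2}\lesssim\norm{f}_{\ell^2(\Omega)}+\norm{g}_{\ell^2(\Omega)}$. Combining the zero-frequency bound with the bound on the double integral yields $\norm{M_{f,g}}^2_{\ell^2(\Omega)}\lesssim\norm{f}^2_{\ell^2(\Omega)}+\norm{g}^2_{\ell^2(\Omega)}$, and in particular $\max(f,g)\in\ell^2(\Omega)$.
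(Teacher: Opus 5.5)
Your proof is correct and takes the same route as the paper: apply the elementary inequality $|\max(a,b)-\max(c,d)|\le\max(|a-c|,|b-d|)$ pointwise, then insert it into the Douglas integral. Your extra care with the mean-zero restriction, and your extension of the Douglas equivalence to all of $L^2(\T)$ (so that finiteness of the integral actually gives $M_{f,g}\in\ell^2(\Omega)$), fills in what the paper calls immediate; under the lemma's abstract hypothesis you can also get the needed weight comparability by testing the assumed formula on the monomials $\zeta^n$, instead of invoking \thref{PROP:hasymp}.
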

Note that since $\min(f,g) = - \max(-f,-g)$, the corresponding statements with $\min$ instead of $\max$ also hold. 

\begin{proof}
    
All statements follow immediately from the first inequality and the Douglas-type representation formula. Thus it suffices to show that

\[
|\max(f(\zeta),g(\zeta)) - \max\left(f(\xi),g(\xi))|^2 \leq \max(|f(\zeta)-f(\xi)|^2, |g(\zeta)-g(\xi)|^2\right),
\]

for almost all $(\zeta, \xi) \in \T^2$. But this follows immediately from the elementary inequality 
\[
|\max(a,b) - \max(c,d)|^2 \leq \max(|a-c|^2, |b-d|^2).
\qedhere
\]

\end{proof}

\begin{lemma}\thlabel{LEM:ExpLemmaDouglas}
    Suppose $\ell^2(\Omega)$ admits a Douglas-type norm representation formula as in \thref{THM:DOUGLASFORMULA}. If $g \in \ell^2(\Omega)$ satisfies $\Re g \geq 0$ on $\T$, then $G := e^{-g} \in \ell^2(\Omega)$, and there exists a numerical constant $C >0$ such that
    \[ \|G - G(0)\|_{\ell^2(\Omega)} \leq C \|g\|_{\ell^2(\Omega)}. \]
\end{lemma}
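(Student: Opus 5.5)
The plan is to apply the Douglas-type formula of \thref{THM:DOUGLASFORMULA} and reduce the statement to an elementary pointwise Lipschitz estimate for $z\mapsto e^{-z}$ on the closed right half-plane. The key observation is that for complex numbers $a,b$ with $\Re a\geq 0$ and $\Re b\geq 0$ we have
\[
\abs{e^{-a}-e^{-b}} \leq \abs{a-b}.
\]
To see this, integrate the derivative along the segment $[b,a]$, which stays in the half-plane by convexity: $\abs{e^{-a}-e^{-b}} = \abs{\int_0^1 e^{-(b+s(a-b))}(a-b)\,ds} \leq \abs{a-b}\sup_{s}e^{-\Re(b+s(a-b))}\leq \abs{a-b}$. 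Applying this with $a=g(e^{it})$ and $b=g(e^{is})$ for almost every pair $(t,s)$ gives
\[
\abs{G(e^{it})-G(e^{is})}^2\leq \abs{g(e^{it})-g(e^{is})}^2 .
\]
Note that $G$ is bounded by $1$, so in particular $G\in L^2(\T)$.

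The Douglas formula only applies to functions with zero mean. So I would apply it to $G-\widehat{G}(0)$ and to $g-\widehat{g}(0)$, where the constant shift does not affect differences. This yields
\[
\sum_{n\neq 0}\abs{\widehat{G}(n)}^2\Omega(|n|) \lesssim \iint \abs{G(e^{it})-G(e^{is})}^2 h\Big(\tfrac{|s-t|}{\pi}\Big)\,ds\,dt \leq \iint \abs{g(e^{it})-g(e^{is})}^2 h\Big(\tfrac{|s-t|}{\pi}\Big)\,ds\,dt \lesssim \sum_{n\neq0}\abs{\widehat{g}(n)}^2\Omega(|n|)\leq \norm{g}^2_{\ell^2(\Omega)}.
\]
Since the zeroth coefficient of $G$ is finite, this shows both that $G\in\ell^2(\Omega)$ and the claimed bound. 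Here I interpret $G(0)$ as the mean $\widehat{G}(0)$, that is, the value at $0$ of the Poisson extension, and the constant $C$ depends only on the comparability constants in the Douglas formula.

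The one point needing care is justifying the use of the Douglas formula. The comparability in \thref{THM:DOUGLASFORMULA} is stated for $f\in\ell^2(\Omega)$, but $G$ is not yet known to lie in $\ell^2(\Omega)$. This is handled by the Parseval computation preceding \thref{PROP:hasymp}, which is valid for any $f\in L^2(\T)$ as an identity in $[0,\infty]$, combined with \thref{PROP:hasymp}. Together these show that the double integral is comparable to $\sum_{n\neq0}\abs{\widehat f(n)}^2\Omega(|n|)$ for every $L^2$ function. Hence finiteness of the double integral for $G$ already forces $G\in\ell^2(\Omega)$, and no real obstacle remains.
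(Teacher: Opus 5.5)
Your proposal is correct and follows the paper's proof: the pointwise bound $\abs{e^{-a}-e^{-b}}\leq\abs{a-b}$ for $\Re a,\Re b\geq 0$, obtained by integrating along the segment, is inserted into the Douglas formula. Your additional remark fills in a point the paper leaves implicit. The Parseval identity preceding \thref{PROP:hasymp} holds in $[0,\infty]$ for every $L^2$ function, so finiteness of the Douglas integral of $G$ already gives $G\in\ell^2(\Omega)$.
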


\begin{proof}
    By basic calculus we have that
    \[ |G(e^{it}) - G(e^{is})| \leq |g(e^{it}) - g(e^{is})| \cdot \sup_{z \in L} e^{\Re z},\] where $L$ is the straight line segment between $-g(e^{it})$ and $-g(e^{is})$. Since this line segment is contained in the left half-plane, we obtain $e^{\Re z} \leq 1$ for $z \in L$. Plugging this estimate into the Douglas formula in \thref{THM:DOUGLASFORMULA} gives us the desired bound.
\end{proof}

More generally, if $\Phi:\C \to \C$ is a globally Lipschitz continuous function, then using the Douglas representation formula of $\ell^2(\Omega)$, one easily shows that 
\begin{equation}\label{EQ:Lipl2Omega}
\norm{\Phi \circ f}_{\ell^2(\Omega)}\leq C(\Phi) \norm{f}_{\ell^2(\Omega)}, \qquad f\in \ell^2(\Omega).
\end{equation}

\section{Supporting sets for distributions and complex measures}\label{SEC:MAINTECHRES}

In this section we prove two theorems that show that existence of a more general object -- such as a distribution or complex measure that satisfies a one-sided estimate -- with support contained in a compact set $E$, is enough to conclude that $E$ has positive $\omega$-capacity, i.e. that $E$ supports a probability measure that satisfies a two-sided bound. Both theorems are, in slightly different ways, derived from simultaneous approximation results for sets of $\omega$-capacity $0$. 

We begin by proving \thref{THM:Distrib}, which is a result about support sets of certain distributions.

The proof will be based on the following simultaneous approximation scheme, which is largely constructive.

\begin{prop}\thlabel{PROP:SAdistGENERAL} Let $E\subset \T$ be a compact set of zero $\omega$-capacity. Then for any $\varepsilon>0$ there exists a positive harmonic function $u_\varepsilon$ on $\D$ which extends to a $C^\infty$-function on $\T$ and satisfies the following properties: 
\begin{enumerate}
    \item[(i)] $u_\varepsilon$ is compactly supported in $\T \setminus E$,
    \item[(ii)] $\|u_\varepsilon -1\|_{\ell^2(\Omega)}\leq \varepsilon$.
\end{enumerate}

\end{prop}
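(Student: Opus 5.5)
The construction is the classical one: use equilibrium potentials of small neighbourhoods of $E$ to build a function that is $\geq 1$ near $E$ and has small $\ell^2(\Omega)$-norm, then subtract it from $1$ and truncate. Since $\operatorname{Cap}_\omega(E)=0$, outer regularity (\thref{LEM:CapacityProperties}(i)) gives compact neighbourhoods $F_\delta=\{\zeta:\operatorname{dist}(\zeta,E)\le\delta\}$ with $\operatorname{Cap}_\omega(F_\delta)\to 0$. Let $\nu_\delta$ be the equilibrium measure of $F_\delta$ (\thref{LEM:CapacityProperties}(ii)). Consider $V_\delta:=\operatorname{Cap}_\omega(F_\delta)\,U_{\nu_\delta}$.

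Its Fourier coefficients are $\operatorname{Cap}_\omega(F_\delta)\,\omega(|n|)\widehat{\nu_\delta}(n)$. Hence
\[
\|V_\delta\|^2_{\ell^2(\Omega)}=\operatorname{Cap}_\omega(F_\delta)^2\sum_n\omega(|n|)|\widehat{\nu_\delta}(n)|^2=\operatorname{Cap}_\omega(F_\delta)^2\,\mathcal{E}_\omega(\nu_\delta)=\operatorname{Cap}_\omega(F_\delta)\to0 .
\]
By Frostman (\thref{LEM:CapacityProperties}(iii)), $V_\delta\ge1$ on $F_\delta$ outside a set of zero capacity. Moreover $U_{\nu_\delta}$ is lower semicontinuous, since $\kappa_W$ is continuous into $(0,\infty]$.

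The next step is to obtain $V_\delta\ge 1$ on a full neighbourhood of $E$. The exceptional set lies in $F_\delta$; since $E\subset \operatorname{int}F_\delta$, I would rather use the superharmonic-type minimum principle. Alternatively, I would replace $V_\delta$ by $(1+\eta)V_\delta$ and use lower semicontinuity together with the fact that the bad set has zero capacity, hence zero Lebesgue measure, so it cannot contain an arc. Put $\phi=\min(V_\delta,1)$. By \thref{Lemma: max_Dirichlet} (truncation is $1$-Lipschitz, see \eqref{EQ:Lipl2Omega}), $\|\phi\|_{\ell^2(\Omega)}\lesssim \|V_\delta\|+$ (a constant-term contribution, which is harmless since $\|\phi\|_{L^1}\le \|V_\delta\|_{L^1}=\operatorname{Cap}_\omega(F_\delta)\omega(0)\nu_\delta(\T)$, which is small). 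Then $1-\phi\ge0$, $1-\phi$ vanishes a.e.\ on a neighbourhood $F_{\delta'}$ of $E$, and $\|1-\phi-1\|_{\ell^2(\Omega)}$ is small.

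To get smoothness and compact support in $\T\setminus E$, mollify: $\psi=(1-\phi)*\rho_\eta$ with $\eta<\delta'/2$. Convolution with a probability kernel commutes with the Fourier multiplier, so it does not increase the $\ell^2(\Omega)$-norm of $\psi-1$. The result $\psi$ is $C^\infty$, $\ge0$, vanishes on $F_{\delta'/2}\supset E$, and is supported in $\T\setminus E$. Finally, let $u_\varepsilon$ be the Poisson extension of $\psi$, which is positive harmonic on $\D$ (nontrivial since $\psi\to1$) with boundary values $\psi$. Reading (i) as a statement about the boundary function, both properties follow once $\delta$ is small.

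The main obstacle is the step asserting $V_\delta\ge1$ everywhere on a neighbourhood of $E$, not merely quasi-everywhere. An a.e.\ statement suffices, because after truncation we only need $1-\phi=0$ a.e.\ on $F_{\delta'}$, and mollification then produces genuine vanishing on the smaller neighbourhood. To justify this, I will check that sets of zero outer $\omega$-capacity have Lebesgue measure zero. This holds since normalized Lebesgue measure on any arc has finite energy, because $\kappa_W\in L^1$.
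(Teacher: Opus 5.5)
Your proposal is correct and follows essentially the same route as the paper: the rescaled equilibrium potential $\operatorname{Cap}_\omega(F_\delta)U_{\nu_\delta}$ of a small closed neighbourhood, the truncation $\min(1,\cdot)$ controlled by the Douglas formula plus the zeroth Fourier coefficient, and a mollification that turns quasi-everywhere vanishing on $F_\delta$ into genuine vanishing near $E$. The only point to tidy is the Lebesgue-null claim: use normalized Lebesgue measure on an arbitrary compact $K$ rather than only on an arc, since $\kappa_W\in L^1$ then gives $\operatorname{Cap}_\omega(K)\gtrsim m(K)$, hence $m(U)\lesssim\operatorname{Cap}_\omega(U)$ for open $U$, and this is what rules out a zero-capacity exceptional set of positive measure.
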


Before proving this Proposition, we shall derive \thref{THM:Distrib} as a consequence of \thref{PROP:SAdistGENERAL}.

\begin{proof}[Proof of \thref{THM:Distrib}] We argue by contradiction. Let $E$ be a compact set of zero $\omega$-capacity, and assume that $S$ is a non-trivial distribution supported in $E$ with 
\[
\{\widehat{S}(n)\}_{n\in \mathbb{Z}} \in \ell^2(\omega).
\]
By means of substituting $S$ with $\zeta^k S$, we may without loss of generality assume that $S(1) = \widehat{S}(0)\neq 0$. But by applying \thref{PROP:SAdistGENERAL}, we see that
\[
0=S(u_\varepsilon)= \sum_{n\in \mathbb{Z}}\widehat{S}(n) \widehat{u}_{\varepsilon}(-n) \to \widehat{S}(0), \qquad \varepsilon\to 0+,
\]
which gives the desired contradiction. 
\end{proof}

\begin{proof}[Proof of \thref{PROP:SAdistGENERAL}]
The main idea of the proof is to consider suitable neighborhoods of $E$, from which we take the corresponding equilibrium potentials, in order to construct the functions $f_\varepsilon$ with the desired simultaneous approximation property.

To this end, recall that
\[
\sum_n |\widehat{\mu}(n)|^2 \omega(|n|) \asymp \int_{\T} \int_{\T} \kappa_W(|s-t|) d\mu(e^{it}) d\mu(e^{is}),
\]
and that the $W$-potential of a positive Borel measure $\mu$ is defined as
\[
U_\mu (e^{it}) := \int_{\T}  \kappa_W(|s-t|)\, d\mu(e^{is}).
\]

According to \thref{LEM:CapacityProperties}, we can for every $\varepsilon>0$, find some closed neighborhood $E_\varepsilon \supset E$ such that $\operatorname{Cap}_\omega(E_\varepsilon) \leq \varepsilon^2$. Let $\nu$ be an equilibrium measure for $E_\varepsilon$. Then $g_\varepsilon := \operatorname{Cap}_\omega(E_\varepsilon) U_\nu$ satisfies
$$
\| g_\varepsilon\|_{\ell^2(\Omega)} =  \operatorname{Cap}_\omega(E_\varepsilon) \| U_\nu(x)\|_{\ell^2(\Omega)} = \operatorname{Cap}_\omega(E_\varepsilon) \| \nu \|_{\ell^2(\omega)}.
$$
Thus
$$
\| g_\varepsilon\|_{\ell^2(\Omega)} = \sqrt{\operatorname{Cap}_\omega(E_\varepsilon) } \leq \varepsilon.
$$

We can now define the function $h_\varepsilon := \min(1,g_\varepsilon)$. By property $(iii)$ of \thref{LEM:CapacityProperties}, it follows that $h_\varepsilon = 1$ everywhere on $E_\varepsilon \setminus F_0$, where $F_0$ is a set of $\omega$-capacity $0$. Furthermore, since $K_W$ is positive, we also know that $h_\varepsilon \geq 0$ everywhere on $\T$.

Finally, since $h_\varepsilon \leq g_\varepsilon$ everywhere on $\T$, we have that $|h_\varepsilon(0)| \leq |g_\varepsilon(0)|$, and by combining this with \thref{Lemma: max_Dirichlet} for the remaining Fourier coefficients, we get that
$$
\| h_\varepsilon\|_{\ell^2(\Omega)} \lesssim \| g_\varepsilon\|_{\ell^2(\Omega)}.
$$

Thus, the function $1-h_\varepsilon$ vanishes on $E_\varepsilon$ (except possibly on a set of $\omega$-capacity 0), and furthermore
$$
\|1-(1-h_\varepsilon) \|_{\ell^2(\Omega)} \lesssim \|g_\varepsilon \|_{\ell^2(\Omega)} = \sqrt{\operatorname{Cap}_\omega(E_\varepsilon) } \leq \varepsilon. 
$$

We now proceed to define the function $f_\varepsilon$ as a convolution of $1-h_\varepsilon$ with a smooth approximate of the identity. 

To this end, fix a number $0 < \delta < \text{dist}(E, \T \setminus E_\varepsilon)/2$, and let $\varphi_\delta$ be a smooth approximate of the identity, supported on arc of length $\delta$. We define
\[
f_\varepsilon := (1- h_\varepsilon) * \varphi_\delta.
\]
Clearly $f_\varepsilon \in C^\infty(\T)$. And since $\delta < \text{dist}(E, \T \setminus E_\varepsilon)/2$, we have that $f_\varepsilon = 0$ at every point on the set $\{\zeta \in \T: \text{dist}(\zeta, E) < \delta/2 \}$. 
Note that since sets of zero $\omega$-capacity must also have zero Lebesgue measure, the points in $F_0$ do not affect $f_\varepsilon$. It follows that $f_\varepsilon$ is compactly supported in $\T \setminus E$. Finally, we also have
\[
\| 1- f_\varepsilon \|_{\ell^2(\Omega)} = \| (1 - (1- h_\varepsilon)) * \varphi_\delta \|_{\ell^2(\Omega)} \leq \| 1 - (1- h_\varepsilon) \|_{\ell^2(\Omega)} \lesssim \varepsilon.
\]
Here we used that $\varphi_\delta dm$ is a probability measure, hence $1= \widehat{\varphi_\delta}(0)= 1 \ast \varphi_{\delta}$, and that convolving with $\varphi_\delta$ does not increase the norm, in view of the convolution formula
\[
\abs{\widehat{f \ast \varphi_\delta}(n)} = \abs{\widehat{f}(n)}\abs{\widehat{\varphi_\delta}(n)} \leq \abs{\widehat{f}(n)}, \qquad n\in \mathbb{Z}.
\]
Taking the Poisson extensions of $f_\varepsilon$ yields a harmonic function $u_\varepsilon$ on $\D$ whose restriction to the boundary $\T$ has compact support in $\T \setminus E$ and such that $\|u_\varepsilon - 1 \|_{\ell^2(\Omega)} \leq C \varepsilon$. Replacing $\varepsilon$ with $\varepsilon / C$ gives the desired formula.
\end{proof}

We shall now prove \thref{Thm: four equivalent properties} which shows the equivalence of three properties of a compact subset $E$ of $\T$. We start with two lemmas, the first of which is a generalization to the context of $\ell^2(\Omega)$-spaces of \cite[Theorem 3.4.1]{dirichletspaceprimer}, which details a construction of an analytic function in the Dirichlet space blowing up at $E$. 

\begin{lemma} \thlabel{L:BLOWUPLEMMA}
    Let $E \subset \T$ be a compact set of zero $\omega$-capacity. There exists a function $\Phi \in \ell^2_A(\Omega)$ that has an analytic continuation to a neighbourhood of every point of $\T \setminus E$ and that satisfies \[ \lim_{z \to \zeta} \Re \Phi(z) = +\infty\] for all $\zeta \in E$, and $\Re \Phi \geq 0$ on $\D$. The function $\Phi$ can be chosen to have the form \[\Phi(z) = \sum_{n \geq 0} \phi_n(z)\] with sum convergent in $\ell^2(\Omega)$, $\sum_{n \geq 0} \|\phi_n\|_{\ell^2(\Omega)} < \infty$, and where each $\phi_n$ extends analytically to a disk\footnote{Of radius depending on $n$, of course.} containing $\cD$ and having positive real part in $\D$.
\end{lemma}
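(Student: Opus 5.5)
We construct $\Phi$ as a sum of analytic potentials of equilibrium measures, following the classical Dirichlet-space construction, with $K_W^A$ as the analytic kernel.

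\textbf{Step 1: the building blocks.} By outer regularity (\thref{LEM:CapacityProperties}(i)), choose a decreasing sequence of compact neighbourhoods $E_n \supset E$ with $\operatorname{Cap}_\omega(E_n) \leq 4^{-n}$. Each $E_n$ is a finite union of closed arcs containing $E$ in its interior. Let $\nu_n$ be the equilibrium measure of $E_n$ from \thref{LEM:CapacityProperties}(ii), and set
\[
\psi_n(z) := \operatorname{Cap}_\omega(E_n) \int_{\T} K_W^A(z,\zeta)\, d\nu_n(\zeta).
\]
Then $\Re \psi_n = \operatorname{Cap}_\omega(E_n) U_{\nu_n}$ on $\T$, and $\Re\psi_n>0$ in $\D$. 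For $n\ge1$ the Fourier coefficients of $\psi_n$ are $2\operatorname{Cap}_\omega(E_n)\,\omega(n)\widehat{\nu_n}(n)$, so $\|\psi_n\|_{\ell^2(\Omega)}^2 \lesssim \operatorname{Cap}_\omega(E_n)^2\,\mathcal{E}_\omega(\nu_n) = \operatorname{Cap}_\omega(E_n)$. Taking $\Phi_0=\sum_n 2^{n/2}\psi_n$ would give $\sum_n 2^{n/2}\|\psi_n\|\lesssim \sum 2^{-n/2}<\infty$.

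\textbf{Step 2: smoothing.} The $\psi_n$ do not extend analytically across $\T$, so we replace them by dilates $\phi_n(z) := 2^{n/2}\psi_n(r_n z)$ with $r_n<1$ chosen close to $1$. Each $\phi_n$ is then analytic on $\{|z|<1/r_n\}$ and has positive real part in $\D$, since $r_n\D\subset\D$. Dilation does not increase $\ell^2(\Omega)$-norms, so $\sum\|\phi_n\|_{\ell^2(\Omega)}<\infty$ and $\Phi:=\sum\phi_n\in \ell^2_A(\Omega)$ with $\Re\Phi\ge0$ in $\D$.

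\textbf{Step 3: analytic continuation off $E$.} Fix $\zeta_0\in\T\setminus E$. Some disc $D$ around $\zeta_0$ has closure disjoint from $E_n$ for all $n\ge N$. For $n\ge N$ we have $|K_W^A(r_nz,\zeta)|\le C_\delta$ for $z\in D$ and $\zeta\in E_n$, by \thref{Lemma: K_W^A bound}, since the slits from $\operatorname{supp}\nu_n\subset E_n$ stay away from $D$. Hence $|\phi_n|\le 2^{n/2}C_\delta\operatorname{Cap}_\omega(E_n)\le C_\delta 2^{-n/2}$ on $D$, uniformly. So the tail converges uniformly and is analytic on $D$. The finitely many $n<N$ are analytic there because $\phi_n$ is analytic on a larger disc. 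This gives the continuation of $\Phi$ across $\T\setminus E$.

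\textbf{Step 4: blow-up at $E$ (main obstacle).} Frostman (\thref{LEM:CapacityProperties}(iii)) gives $U_{\nu_n}\ge 1/\operatorname{Cap}_\omega(E_n)$ only quasi-everywhere on $E_n$, and only on the boundary. We need a lower bound on $\Re\psi_n(z)$ for $z\in\D$ near $E$. Write $\Re\psi_n(z)=\int_0^1 P[\operatorname{Cap}_\omega(E_n) U_{\nu_n}](rz)\,W(1-r)\,dr$, a radial average of Poisson integrals of $g_n:=\operatorname{Cap}_\omega(E_n)U_{\nu_n}$. The plan is:
\begin{itemize}
\item Since $g_n\ge1$ q.e., hence Lebesgue-a.e., on the arcs $E_n$ (capacity-zero sets are Lebesgue-null), and $E\subset\operatorname{int}E_{n}$, the Poisson integral $P[g_n](w)$ is at least $1-\eta$ for $w\in\D$ close enough to $E$.
\item Averaging against $W(1-r)dr$ over $r$ near $1$ then gives $\Re\psi_n(z)\ge c>0$ for $z$ in a neighbourhood $V_n$ of $E$ in $\D$. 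Here $c$ depends on the mass of $W$ near $0$; we normalize $\int_0^1W=1$ and restrict to $r\in[1-\tau,1]$, handling the remaining part by positivity.
\item Choosing $r_n$ close enough to $1$ keeps this bound for $\phi_n$ on a slightly smaller neighbourhood.
\end{itemize}
Then $\Re\Phi\ge c\sum_{n\le M}2^{n/2}$ near each $\zeta\in E$ for every $M$, because all terms are nonnegative. Hence $\Re\Phi(z)\to+\infty$ as $z\to\zeta$. The delicate point is making the lower bound on $\Re\psi_n$ uniform up to $E$ despite the radial averaging in $K_W^A$; if this fails, the fallback is to use $h_n=\min(1,g_n)$ as in the proof of \thref{PROP:SAdistGENERAL}, taking its Herglotz extension $H_n$ with $\Re H_n=P[h_n]$, $\|H_n\|_{\ell^2(\Omega)}\lesssim\|g_n\|_{\ell^2(\Omega)}$, and $\Re H_n\to1$ at points of $\operatorname{int}E_n$.
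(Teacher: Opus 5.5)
Your construction is the paper's. You use the same shrinking finite unions of arcs $E_n$ and the same normalized equilibrium potentials $g_n=\operatorname{Cap}_\omega(E_n)U_{\nu_n}$ (the paper's $h_n$). As in the paper, $\psi_n$ is the analytic completion of $P[g_n]$ written through $K_W^A$. You then dilate by $r_n$ and use the Weierstrass $M$-test with the bound on $K_W^A$ away from the slits for continuation across $\T\setminus E$. Steps 1--3 are fine. Do impose $\bigcap_n E_n=E$ explicitly, since Step 3 uses it. The factors $2^{n/2}$ are harmless but unnecessary.

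The one genuine flaw is the formula driving Step 4: it is not true that $\Re\psi_n(z)=\int_0^1 P[g_n](rz)\,W(1-r)\,dr$, because that applies the radial $W$-average twice. The reproducing identity $\int_\T P_z(\xi)P_{r\zeta}(\xi)\,dm(\xi)=P_{rz}(\zeta)$, used in the paper to simplify $K_W$, together with Fubini gives
\[
\Re\psi_n(z)=\operatorname{Cap}_\omega(E_n)\int_\T\int_0^1 P_{rz}(\zeta)\,W(1-r)\,dr\,d\nu_n(\zeta)=\int_\T P_z(\xi)\,g_n(\xi)\,dm(\xi)=P[g_n](z),\qquad z\in\D .
\]
So $\Re\psi_n$ is just the Poisson extension of $g_n$. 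This is also what your own Step 1 claim, $\Re\psi_n=g_n$ on $\T$, requires. Under your formula, the averaging in the second bullet would only give a constant of order $\int_0^{\tau_n}W$. Here $\tau_n\to0$ because the distance from $E$ to $\T\setminus E_n$ shrinks, so the uniform $c$ you assert would not be justified as written.

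With the correct identity the "main obstacle" disappears, and your first bullet alone finishes the proof. It gives $P[g_n]\ge 1-\eta$ on $\{w\in\D:\operatorname{dist}(w,E)<\rho_n\}$. Choosing $1-r_n<\rho_n/2$ then gives $\Re\phi_n\ge 2^{n/2}(1-\eta)$ on $\{z\in\D:\operatorname{dist}(z,E)<\rho_n/2\}$. Summing finitely many nonnegative terms yields $\Re\Phi\to+\infty$ at $E$. This is exactly the paper's argument: it fixes $r_n$ with $P[g_n](r_n\zeta)\ge 1/2$ for $\zeta\in E$ and uses continuity of $z\mapsto P[g_n](r_nz)$ on $\cD$.

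Drop the fallback. Replacing $g_n$ by $\min(1,g_n)$ destroys the representation $\psi_n=\operatorname{Cap}_\omega(E_n)\int K_W^A(\cdot,\zeta)\,d\nu_n(\zeta)$. That representation is what your Step 3 continuation relies on. The Herglotz integral of $\min(1,g_n)$, whose density is positive on all of $\T$, is not obviously analytic across any arc and would need a separate argument.
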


\begin{proof} As in the proof of \thref{PROP:SAdistGENERAL}, we shall use the properties of $\omega$-capacity listed in \thref{LEM:CapacityProperties}. Take a decreasing sequence of compact sets $\{E_n\}_n$ which are finite unions of arcs, such that $E$ is contained in the interior of each $E_n$, and we have $\operatorname{Cap}_\omega(E_n) > 0$, $\bigcap_n E_n = E$ and
  \[
    \sum_{n \geq 0} \operatorname{Cap}_\omega(E_n)^{1/2} < \infty.
    \]
Let $\nu_n$ be an equilibrium measure for $E_n$ and $U_n = U_{\nu_n}$ be an equilibrium potential of $E_n$. Define $h_n := \operatorname{Cap}_\omega(E_n) U_n$. As in the proof of \thref{PROP:SAdistGENERAL}, on $\T$ we know that
    \begin{enumerate}
        \item $h_n \geq 1$ on $E_n$, possibly with the exception of a set of zero $\omega$-capacity,
        \item $0 \leq h_n$ on $\T$
        \item $\| h_n \|_{\ell^2(\Omega)} \leq \sqrt{\operatorname{Cap}_\omega(E_n)}$. 
    \end{enumerate}

    Then we can represent each $h_n$ as the Poisson integral of its boundary values in order to obtain harmonic extensions to $\D$. By property $(1)$ above combined with the fact that each $E_n$ is the union of finitely many arcs, we find $r_n \in (0,1)$ sufficiently close to $1$ so that
    \begin{equation}
        \label{E:hnLowerEstimateOnE}
        h_n(r_ne^{iv}) \geq 1/2 \quad \text{for all } e^{iv} \in E.
    \end{equation}
     Then $z \mapsto h_n(r_nz)$ is harmonic in a disk larger than $\overline{\D}$. Let $\widetilde{h_n}$ be the harmonic conjugate of $h_n$ on $\D$ that vanishes at the origin, and set
    \[
    \varphi_n(z) := h_n(r_nz) + i \widetilde{h_n}(r_nz), \quad z \in \overline{\D}.
    \]
    Since $|\widehat{h_n}(k)| = |\widehat{\widetilde{h_n}}(k)|$ for all integers $k \neq 0$, and the radial dilation only decreases the Fourier coefficients, the power series norm of $\ell^2_A(\Omega)$ shows that
    \[
    \| \varphi_n\|_{\ell^2(\Omega)} \leq 2 \| h_n\|_{\ell^2(\Omega)}  \leq 2\sqrt{\operatorname{Cap}_\omega(E_n)}.
    \]
    Thus, the function $\Phi := \sum_{n \geq 0} \varphi_n$ belongs to $\ell_A^2(\Omega)$ in view of the estimate
    \[
    \| \sum_{n \geq 0} \varphi_n \|_{\ell^2(\Omega)} \leq 2\sum_{n \geq 0}\sqrt{\operatorname{Cap}_\omega(E_n)} < \infty.
    \]    
    By the non-negativity in $(2)$ above, and by the bound from below in \eqref{E:hnLowerEstimateOnE}, we get for any integer $N>0$, and any point $\zeta \in E$ that
    \[ 
    \liminf_{z \to \zeta} \Re \Phi(z) \geq \liminf_{z \to \zeta} \sum_{n = 0}^N h_n(r_n z) \geq N/2.
    \]
    
    What remains to be proved is the analyticity of $\Phi$ near each point of $\T \setminus E$. Explicitly, we have
      \[
    \varphi_n(z) = \operatorname{Cap}_\omega(E_n) \int_\T K_W^A(r_n z,\zeta) d \nu_n(\zeta)
    \] where $K_W^A$ denotes  the analytic kernel defined in \eqref{E:KWADEF}. Since $z \mapsto K_W^A(z,\zeta)$ is analytic in the slit plane $\mathbb{C} \setminus \{ \zeta/r : r \in (0,1]\}$ and since $\supp{\nu_n} \subseteq E_n$, each $\phi_n$ is analytic $\mathbb{C} \setminus \{\zeta/rr_n : r \in (0,1], \zeta \in E_n\}$. 
    
    Fix $z_0 \in \T \setminus E$. For $N$ sufficiently large, there exists a small disk $D(z_0)$ centered at $z_0$ and $\delta > 0$ such that
    $|\zeta - zrr_n| > \delta$ for all $z \in D(z_0)$, $r \in (0,1]$ and all $\zeta \in E_n$, $n \geq N$. By \thref{Lemma: K_W^A bound}
    \[
    |\varphi_n(z)|  = \operatorname{Cap}_\omega(E_n) \int_{E_n} |K_W^A(r_n z,\zeta)| d \nu_n(\zeta) \leq C_\delta \operatorname{Cap}_\omega(E_n)
    \]
    for all $z \in D(z_0)$. Since $\sum_{n=0}^\infty \operatorname{Cap}_\omega(E_n) < \infty$, Weierstrass M-test implies that $\sum_{n > N}^\infty \varphi_n(z)$ converges uniformly to an analytic function on $D(z_0)$. Thus $\Phi(z) := \sum_{n\geq 0} \phi_n(z)$ is analytic in a neighborhood of $z_0$, and since $z_0 \in \T \setminus E$ was arbitrary, it follows that $\Phi$ extends analytically across every arc in $\T \setminus E$.
\end{proof}

In our second lemma, we shall utilize \thref{L:BLOWUPLEMMA} in order to construct a cyclic function in $\ell^2_A(\Omega)$ that vanishes precisely on $E$.

\begin{lemma}
    \thlabel{L:CyclicFunctionLemma}
    Let $E \subset \T$ be a compact set of zero $\omega$-capacity. There exists an outer function $f$ belonging to $C_A(\T) \cap \ell_A^2(\Omega)$ with $f=0$ on $E$, such that
    \[
    \{f(z)z^n: n=0,1,2,\dots \}
    \]
    has a dense linear span in $\ell^2_A(\Omega)$. Moreover, $f$ can be chosen to have an analytic continuation to a neighborhood of every point of $\T \setminus E$.
 \end{lemma}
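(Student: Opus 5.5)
The plan is to take $f := e^{-\Phi}$, where $\Phi$ is the function from \thref{L:BLOWUPLEMMA}, and more generally to consider $f_t := e^{-t\Phi}$ for $t>0$. Since $\Re \Phi \geq 0$ on $\D$, we have $|f| \leq 1$ on $\D$. Because $\Re\Phi(z)\to+\infty$ as $z\to\zeta\in E$, the function $f$ extends continuously to $E$ with value $0$. At points of $\T\setminus E$, $\Phi$ is analytic across $\T$, so $f$ continues analytically there as well. Hence $f\in C_A(\T)$, $f=0$ on $E$, and $f$ has the required analytic continuation.

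Membership in $\ell^2_A(\Omega)$ will come from \thref{LEM:ExpLemmaDouglas} applied to the boundary function $t\Phi$, whose real part is nonnegative. This gives $\|f_t - f_t(0)\|_{\ell^2(\Omega)} \leq C t\|\Phi\|_{\ell^2(\Omega)}$. The subtlety is that $\Phi$ is unbounded near $E$. To handle this, I would first apply the lemma to the partial sums $\Phi_N=\sum_{n\le N}\phi_n$, which are analytic across $\overline{\D}$ with nonnegative real part, and then pass to the limit $N\to\infty$ using bounded pointwise convergence together with weak compactness in $\ell^2(\Omega)$.

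Outerness follows because $\Phi$ belongs to $H^2$ and hence to the Smirnov class: $e^{-\Phi}$ is the exponential of an analytic function with $\Re(-\Phi)\le 0$ and $\Phi\in H^1$. By the Herglotz/Smirnov argument, $\log|f| = -\Re\Phi$ is the Poisson integral of its boundary values, so $f$ is outer.

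For cyclicity, let $[f]$ denote the closed shift-invariant subspace generated by $f$. Multiplier arguments are unavailable, so I plan to exploit exponential structure instead. First I will show that $f_t\in[f]$ for every $t\ge 1$: we have $f_t = f\cdot e^{-(t-1)\Phi}$, the second factor is a bounded analytic function in $\ell^2_A(\Omega)$, and I would approximate it by its Fejér means or dilations. Because this step requires control of products, I would instead use the Douglas formula directly on $f\cdot p_k$, where the $p_k$ are polynomials converging boundedly to $e^{-(t-1)\Phi}$ with Douglas norms bounded. This gives weak convergence of $fp_k$ to $f_t$ in $\ell^2(\Omega)$. By the semigroup property $f_{t}f_{s}=f_{t+s}$ and an analogous argument, the set $\{t>0 : f_t\in[f]\}$ is closed under addition. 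Showing it contains small $t$ is the key step.

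I expect the main obstacle to be passing from $t\ge1$ down to $t\to0^+$, that is, showing that $[f]$ contains $f_t$ for small $t$. Here I would use the identity $\frac{f_{t}}{f_{2t}} = f_{-t}$, which is not useful directly; instead I would use the following reasoning. The map $t\mapsto f_t$ is analytic in $t$ for $\Re t>0$ as an $\ell^2(\Omega)$-valued function, by the uniform bounds from \thref{LEM:ExpLemmaDouglas} together with the Cauchy estimates. If $\ell$ is a functional annihilating $[f]$, then $\ell(f_t)=0$ for all real $t\ge1$, and by uniqueness of analytic continuation $\ell(f_t)=0$ for all $t>0$. Once this holds, the estimate $\|f_t-1\|_{\ell^2(\Omega)}\to 0$ as $t\to0^+$ gives $\ell(1)=0$, so $1\in[f]$ and $f$ is cyclic. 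The delicate points are verifying the analyticity in $t$ and the bound $\|f_t - f_t(0)\|\lesssim |t|$ for complex $t$ with $\Re t>0$; the latter holds because $\Re(t\Phi)\ge0$ on the boundary whenever $\Re t\ge 0$ and $|\arg t|$ is small, and in general one applies the Lipschitz estimate \eqref{EQ:Lipl2Omega} to the truncated functions.
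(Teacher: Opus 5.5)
Your choice $f=e^{-\Phi}$ is right, and your arguments for continuity, vanishing on $E$, analytic continuation, membership in $\ell^2_A(\Omega)$ and outerness all work. Your Fej\'er-mean argument also correctly shows $fg\in[f]$ for every $g\in H^\infty\cap\ell^2_A(\Omega)$, hence $f_t\in[f]$ for $t\ge 1$.

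The gap is the step you flagged: passing from $t\ge1$ to $t\to0^+$ by analytic continuation in $t$. For non-real $t$ you have $\Re(t\Phi)=\Re t\,\Re\Phi-\operatorname{Im}t\,\operatorname{Im}\Phi$. \thref{L:BLOWUPLEMMA} only gives $\Re\Phi\ge0$ and says nothing about $\operatorname{Im}\Phi$ relative to $\Re\Phi$, so $\Phi$ need not take values in any sector $\{|\arg w|\le \pi/2-\varepsilon\}$. Your assertion that $\Re(t\Phi)\ge0$ for small $|\arg t|$ is therefore false in general. Then $|e^{-t\Phi}|=e^{-\Re(t\Phi)}$ can be unbounded on $\T$, and $e^{-t\Phi}$ need not lie in $\ell^2(\Omega)$ at all, so there is no $\ell^2(\Omega)$-valued analytic map to continue. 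Neither \thref{LEM:ExpLemmaDouglas} nor \eqref{EQ:Lipl2Omega} rescues this. For $\operatorname{Im}t\neq0$ the map $w\mapsto e^{-tw}$ is not even bounded on the right half-plane, let alone globally Lipschitz. Bounds for the truncations $\Phi_N$ depend on $\sup|\operatorname{Im}\Phi_N|$ and are not uniform in $N$. Without a complex neighbourhood of $(0,\infty)$ on which $t\mapsto \ell(f_t)$ is analytic, the identity theorem cannot carry $\ell(f_t)=0$ from $[1,\infty)$ down to small $t$.

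The missing idea is to shrink the \emph{tail} of $\Phi$ rather than $\Phi$ itself. This uses the summable decomposition $\Phi=\sum_n\phi_n$ with $\sum_n\|\phi_n\|_{\ell^2(\Omega)}<\infty$ from \thref{L:BLOWUPLEMMA}, which your cyclicity argument never uses.
\begin{itemize}
\item Each $\phi_n$ is analytic on a disc containing $\cD$, so $F_N:=\exp\bigl(\sum_{n\le N}\phi_n\bigr)$ is analytic and bounded near $\cD$.
\item Its Taylor polynomials converge to it in $C^1(\T)$; alternatively use your own Fej\'er argument, since $F_N\in H^\infty\cap\ell^2_A(\Omega)$. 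The Douglas formula with $|f|\le 1$ then gives $fF_N=\exp\bigl(-\sum_{n>N}\phi_n\bigr)\in[f]$.
\item \thref{LEM:ExpLemmaDouglas}, applied to the tail (which has nonnegative real part), yields $\|fF_N-f(0)F_N(0)\|_{\ell^2(\Omega)}\le C\sum_{n>N}\|\phi_n\|_{\ell^2(\Omega)}\to 0$.
\item Since $f(0)F_N(0)\to1$, you get $1\in[f]$, so $f$ is cyclic.
\end{itemize}
This is the paper's argument, and it makes the semigroup $f_t$ unnecessary.

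Your scheme could instead be salvaged by replacing $\Phi$ with the sector-valued function $\Psi=\sqrt{1+\Phi}$. It still lies in $\ell^2_A(\Omega)$, because $|\sqrt{1+a}-\sqrt{1+b}|\le\frac12|a-b|$ on the right half-plane. It still has $\Re\Psi\to+\infty$ at $E$, and it satisfies $\Re(t\Psi)\ge 0$ for $|\arg t|\le\pi/4$. That modification is not in your proposal, however.
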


 \begin{proof}
     Let $\Phi$ be as in \thref{L:BLOWUPLEMMA} and set
     \[ 
     f(z) = \exp(-\Phi(z)), \qquad z\in \cD.
     \] 
     Then $f: \D \to \D$ is analytic, and it has an analytic continuation to a neighborhood of each point in $\T \setminus E$, since $\Phi$ has this property. On the other hand, we also have that $\lim_{z \to \zeta} |f(z)| = 0$ whenever $\zeta \in E$. Hence we see that $f$ is continuous in $\cD$, and it vanishes precisely on $E$. Furthermore, it follows from \thref{LEM:ExpLemmaDouglas} in conjunction with \thref{L:BLOWUPLEMMA} that $f \in \ell^2_A(\Omega)$. 
     
     It now only remains to show that the linear span of 
     \[
     \{f(z)z^n: n=0,1,2,\dots \} \qquad \text{is dense in} \quad \ell^2_A(\Omega).
     \]
    According to \thref{L:BLOWUPLEMMA} we can ensure the representation $\Phi(z) = \sum_{n \geq 0} \phi_n(z)$ with each $\phi_n$ analytic in a disk larger than $\cD$ and with non-negative real part. Consider the functions
    \[
    F_N(z) := \exp \left( \sum_{n=0}^N \varphi_n(z) \right),  
    \]
    which are analytic in a disk larger than $\cD$. Let $p_{N,k}$ be the $k$-th order Taylor polynomial of $F_N$. Set $g_k := F_N - p_{N,k}$. Then $g_k(\zeta)$ and $g'_k(\zeta)$ tend uniformly to $0$ for $\zeta \in \T$. Since
    \[ |f(e^{it})g_k(e^{it}) - f(e^{is})g_k(e^{is})| \leq |f(e^{it})||g_k(e^{it}) - g_k(e^{is})| + |g_k(e^{is})||f(e^{it}) - f(e^{is})|\] and $|f| \leq 1$ on $\T$,
    we obtain from Douglas formula that
    \[ 
     \| fg_k\|_{\ell^2(\Omega)} = \|f(F_N - p_{N,k})\|_{\ell^2(\Omega)}  \to 0.
    \]
    As a consequence, we conclude that for any $N\geq 1$: 
    \begin{equation}
        \label{E:fFNstruct}
        f(z)F_N(z) = \exp \left(- \sum_{n > N} \varphi_n(z) \right)
    \end{equation} 
 belongs to the closed linear span of $\{f(z)z^n: n=0,1,2,\dots \}$ in $\ell^2_A(\Omega)$. Now applying  \thref{LEM:ExpLemmaDouglas} to the functions \eqref{E:fFNstruct}, we also get that
    \[
    \limsup_{N \to \infty} \|fF_N - f(0)F_N(0)\|_{\ell^2(\Omega)} \leq \limsup_{N \to \infty} C \|\sum_{n > N} \phi_n\|_{\ell^2(\Omega)} \leq \limsup_{N \to \infty} C\sum_{n>N} \norm{\phi_n}_{\ell^2(\Omega)} = 0.
    \]
    Since $f(0)F_N(0) \to 1$, we actually conclude that $fF_N \to 1$ in $\ell^2(\Omega)$. This completes the proof.
 \end{proof}

Most of the preparatory work towards proving \thref{Thm: four equivalent properties} has already been carried out in the previous lemmas, hence it only remains to assemble the final pieces.

\begin{proof}[Proof of \thref{Thm: four equivalent properties}]

  \proofpart{1}{$(i) \Rightarrow (ii)$.} This has already been established as a part of \thref{L:CyclicFunctionLemma}.

    \proofpart{2}{$(ii) \Rightarrow (iii)$.} Fix an arbitrary $h \in \ell^2_A(\Omega)$, and let $f$ be the cyclic function from part $(ii)$ which is continuous on $\cD$ and vanishes precisely on $E$. Then we can find analytic polynomials $(p_n)_n$ such that $p_n f \to h$ in $\ell^2(\Omega)$. Note that the $p_n f$ also vanishes on $E$. For each $n$, we let $F_n$ denote a Fej\'er polynomial of $p_nf$ of sufficiently high degree, so that $F_n \to 0$ uniformly on $E$ and $F_n \to h$ in $\ell^2(\Omega)$. We conclude that part $(iii)$ of the theorem holds in the special case when $g \in C(E)$ is identically zero.    

    Next, assume that $g \in C(E)$ is non-trivial. Let $(q_n)_n$ be a sequence of analytic polynomials converging uniformly to $g$ on $E$. This is possible, since $E$ has Lebesgue measure zero. By what we just proved for $h$, we may for each $n$ find a sequence $(F_{n,k})_k$ of analytic polynomials such that $F_{n,k} \to q_n$ in $\ell^2(\Omega)$ and $F_{n,k} \to 0$ uniformly on $E$. Then taking
    $Q_n := q_n - F_{n,k(n)}$ for sufficiently large $k(n)$, we obtain a sequence $(Q_n)_n$ of polynomials converging to $0$ in $\ell^2(\Omega)$ and uniformly to $g$ on $E$. It follows that $(iii)$ holds in the special case that $h \in \ell^2(\Omega)$ is identically zero. 
    
    Combining the results of the two paragraphs shows that the statement $(iii)$ holds in general.

    \proofpart{3}{$(iii) \Rightarrow (i)$.}
        Let $h=1 \in \ell^2(\Omega)$ and $g=0 \in C(E)$. According to the assumption in $(iii)$, there exist analytic polynomials $Q_n$ such that
    \[
    \|Q_n - 1 \|_{\ell^2(\Omega)} + \sup_{\zeta \in E} |Q_n(\zeta)| \rightarrow 0.
    \]
    Set $\varphi_n := 1-Q_n$, and note that $\|\varphi_n \|_{\ell^2(\Omega)}\rightarrow 0$ as $n \rightarrow \infty$.

    For the sake of obtaining a contradiction, assume that $\operatorname{Cap}_\omega(E) > 0$, and let $\mu$ be a probability measure supported on $E$ with $\|\mu\|_{\ell^2(\omega)} < \infty$. Then, by regarding $\mu$ as a bounded linear functional on $\ell^2(\Omega)$ through the Cauchy pairing (see Section~\ref{SEC:CauchyDualitySec}) and using the Cauchy-Schwarz inequality, we see that
    \[
    \left| \int_E \varphi_n d \mu \right| \leq  \|\mu\|_{\ell^2(\omega)} \|\varphi_n\|_{\ell^2(\Omega)} \rightarrow 0, \qquad \text{as} \quad n\to \infty.
    \]
    However, the uniform convergence of $Q_n$ to zero on $E$ implies
    \[
    \int_E \varphi_n d \mu = 1-\int_E Q_n d\mu \rightarrow 1, \qquad \text{as} \quad n\to \infty.
    \]
    From this contradiction, we conclude that $\operatorname{Cap}_\omega(E) = 0$, which completes the proof of the final implication.
\end{proof}

\begin{remark}
    Consider again the analytic function $\Phi$ appearing in \thref{L:BLOWUPLEMMA} which satisfies $\Re \Phi(z) \to +\infty$ as $z$ tends to any point of $E$ and is analytic in a neighborhood of any point of $\T \setminus E$. It follows that the function 
    \[ 
    P(z) := \frac{\Phi(z)}{\Phi(z) + A} \quad A > 0,
    \] extends continuously to $\cD$ and satisfies $|P(z)| \leq 1$ on $\T$, with equality if and only if $z \in E$. In fact we have $P(z) = 1$ on $E$. Moreover, a short argument involving the Douglas formula for $\ell^2(\Omega)$ in \thref{THM:DOUGLASFORMULA} shows that $P \in \ell^2(\Omega)$. Hence $P \in \ell^2(\Omega)$ is a so-called \textit{peak function} for $E$. 
    
    Applying the Douglas formula, a truncation argument for the series defining $\Phi$, and by means of further increasing the constant $A>0$ in the definition of $P$, one can actually produce such a peak function $P$ with arbitrarily small $\ell^2(\Omega)$-norm. We wish to mention that these functions can be used to constructively prove a peak interpolation result for $\ell^2(\Omega)$: for any compact set $E \subset \T$ of zero $\omega$-capacity and a function $g$ continuous on $E$, there exists a function $f \in \ell^2_A(\Omega)$ which extends continuously to $\cD$ and satisfies $f(z) = g(z)$ for $z \in E$, $\sup_{z \in \T} |f(z)| \leq \sup_{z \in E} |g(z)|$. See a similar argument in \cite[Theorem 6.1.3]{rudin1969function} which can be adapted to prove the mentioned result.
\end{remark}

In passing, we also mention that recent result on peak interpolation with Banach algebra functions in Drury-Arveson spaces was also considered in \cite{chalmoukis2022totally}.

Next, we prove \thref{THM:UNILAT-BILAT}, which states that if a compact subset $E \subset \T$ supports a measure that satisfies a one-sided estimate, then it must also support a measure that satisfies a two-sided estimate.

\begin{proof}[Proof of \thref{THM:UNILAT-BILAT}]

We will prove the contrapositive. Suppose that $\operatorname{Cap}_\omega(E)=0$
and let $\mu$ be a finite measure supported on $E$ satisfying
\eqref{EQ:OneSided}. We will prove that $\mu=0$.

By \eqref{EQ:OneSided}, the sequence
$\{\widehat{\mu}(n)\}_{n\geq0}$ defines a bounded linear functional on
$\ell_A^2(\Omega)$ by
\[
\ell_\mu(g)
:=
\sum_{n\geq0}
\widehat g(n)\,\overline{\widehat\mu(n)}.
\]
Moreover, whenever $g\in C_A(\mathbb T)\cap\ell_A^2(\Omega)$,
we have
\[
\ell_\mu(g)
=
\int_{\mathbb T}g(\zeta)\,\overline{d\mu(\zeta)}.
\]

Let $F\in C_A(\mathbb T)\cap\ell_A^2(\Omega)$ be the cyclic function
constructed in \thref{L:CyclicFunctionLemma} satisfying $F|_E=0$.
For every $k\geq0$, the boundedness of the shift gives
\[
z^kF\in C_A(\mathbb T)\cap\ell_A^2(\Omega).
\]
Since $\mu$ is supported on $E$, it follows that
\[
\ell_\mu(z^kF)
=
\int_E \zeta^k F(\zeta)\,\overline{d\mu(\zeta)}
=0.
\]

Thus $\ell_\mu$ vanishes on $\operatorname{span}\{z^kF:k\geq0\}$.
Since $F$ is cyclic, this span is dense in
$\ell_A^2(\Omega)$ and thus $\ell_\mu\equiv0$ on $\ell_A^2(\Omega)$.
In particular, applying $\ell_\mu$ to the monomials gives
\[
\widehat\mu(n)
=
\conj{\ell_\mu(z^n)}
=0,
\qquad n\geq0.
\]

The F. and M. Riesz theorem now implies that $\mu$ is absolutely continuous with respect to Lebesgue measure.
On the other hand $\operatorname{Cap}_\omega(E)=0 \implies |E|=0$.
Since $\mu$ is supported on $E$, we conclude that $\mu=0$, which finishes the proof \qedhere
\end{proof}


  \subsection{Translates in $\ell^2(\Omega)$}

  Here we prove \thref{THM:BICYCLIC} on the role of $\omega$-capacity for problems of translates in $\ell^2(\Omega)$.

  \begin{proof}[Proof of \thref{THM:BICYCLIC}]
  We shall establish the equivalence between $(i)$ and $(ii)$, and between $(i)$ and $(iii)$.

\proofpart{1}{$(i)$ implies $(ii)$:} Suppose that $E$ has zero $\omega$-capacity, and fix
$f\in C(\T)\cap\ell^2(\Omega)$ with 
\[
\{\zeta \in \T: f(\zeta)=0\} \subseteq E.
\]
Our primary aim is to establish the containment
\begin{equation}\label{EQ:Cinftyin[f]}
C^\infty_0(\T\setminus E)\subseteq [f],
\end{equation}
where $[f]$ denotes the closed linear span of the translates of $f$
in $\ell^2(\Omega)$.

Fix $\phi \in C^\infty_0(\T \setminus E)$ and note that since $f$ is continuous and does not vanish on $\supp{\phi}$, we can find a number $\delta>0$ such that
$|f|\geq\delta$ on $\supp{\phi}$. Now choose a Lipschitz function $\Phi:\C \to \C$ which agrees with $\frac{1}{z}$ whenever $|z|\geq \delta$. For instance, one may explicitly take
\[
\Phi(z) := \frac{\conj{z}}{\max\left( \, |z|^2, \, \delta^2 \right)} \qquad z\in \C.
\]
Now consider the function
\[
h(\zeta) =\Phi \circ f(\zeta) \cdot \phi(\zeta), \qquad \zeta \in \T,
\]
and note that $h$ has the remarkable property that
\[
\phi = f h \quad \text{on} \quad \T.
\]
Note that the claim essentially follows once we show that $h \in C(\T) \cap \ell^2(\Omega)$, and pass to appropriate Fej\'er means of $h$. To clarify this point, let $\mathcal{E}_\Omega(g)$ denote the Douglas integral of $g$ representing the semi-norm of $\ell^2(\Omega)$ as stated in \thref{THM:DOUGLASFORMULA}, and observe that
\[
\mathcal{E}_\Omega(h) \leq 2\norm{\Phi \circ f}_{L^\infty(\T)}^2\, \mathcal{E}_\Omega(\phi) + 2\norm{\phi}_{L^\infty(\T)}^2\,\mathcal{E}_\Omega(\Phi \circ f) < \infty.
\]
Using the observation in \eqref{EQ:Lipl2Omega}, it follows that $\Phi \circ f \in C(\T) \cap \ell^2(\Omega)$, and hence we conclude that $h \in C(\T) \cap \ell^2(\Omega)$. This conclusion allows us to exhibit the Fej\'er means $T_j$ of $h$, which satisfy
\[
\sup_{\zeta \in \T} \abs{T_j(\zeta)-h(\zeta)} \to 0, \qquad \norm{T_j-h}_{\ell^2(\Omega)} \to 0.
\]
Using the above product estimate once more, we obtain
\[
\mathcal{E}_{\Omega}\left(f(T_j-h)\right) \leq 2\norm{f}_{L^\infty(\T)}^2\,\mathcal{E}_{\Omega} (T_j-h) +2\norm{T_j-h}_{L^\infty(\T)}^2 \,\mathcal{E}_{\Omega}(f) \to 0.
\]
Since the $L^2(\T)$-norm is also easily shown to tend to zero, we conclude that $f T_j \to fh =\phi$ in
$\ell^2(\Omega)$. It follows that $\phi \in [f]$, which establishes the claim in \eqref{EQ:Cinftyin[f]}.

We can now carry out the proof of the desired implication. To this end, assume that $s\in\ell^2(\omega)$ is an arbitrary element which annihilates all translates of $f$, that is
\[
\sum_{n\in \mathbb{Z}} \widehat{s}(n) \conj{\widehat{f}(n-k)} =0, \qquad k\in \mathbb{Z}.
\]
Then the inclusion in \eqref{EQ:Cinftyin[f]} that was just established, coupled with the fact that $\ell^2(\omega)$ only consists of distributions, implies that we may interpret the annihilation in the distributional pairing as
\[
s(\phi) =0,\qquad \phi\in C^\infty_0(\T\setminus E).
\]
It then follows that $\supp{s}\subseteq E$. But since $E$ has zero $\omega$-capacity, \thref{THM:Distrib} forces $s\equiv0$. We therefore conclude that $f$ is bicyclic in $\ell^2(\Omega)$.
  
\proofpart{2}{$(ii)$ implies $(i)$.}
  Suppose $E$ has positive $\omega$-capacity, and let $\mu\in M(E)$ be a probability measure with $\{\widehat{\mu}(n)\}_n \in \ell^2(\omega)$. Fix any $f \in C(\T)\cap \ell^2(\Omega)$ with zero set $\{\zeta \in \T: f(\zeta)=0\} =E$. Then it follows that
  \[
  \sum_{n\in \mathbb{Z}} \widehat{f}(n-k) \conj{\widehat{\mu}(n)} = \int_{\T} f(\zeta) \zeta^k d\conj{\mu(\zeta)} =0, \qquad k\in \mathbb{Z}.
  \]
  But then $\mu$ is a non-trivial measure belonging to $\ell^2(\omega)$ which annihilates all translates of $f$, and hence $f$ cannot be bicyclic in $\ell^2(\Omega)$.

\proofpart{3}{$(i)$ equivalent to $(iii)$.} 
Suppose that $C^\infty_0(\T \setminus E)$ is dense in $\ell^2(\Omega)$ and assume that $\mu \in M(E) \cap \ell^2(\omega)$. But then 
\[
\int_{\T} \phi d\mu = 0, \qquad \phi \in C^\infty_0(\T \setminus E),
\]
from which the density assumption implies that $\mu \equiv 0$. This forces $E$ to have zero $\omega$-capacity. Conversely, if $C^\infty_0(\T \setminus E)$ is not dense in $\ell^2(\Omega)$, then by the Hahn-Banach Theorem, there exists a non-trivial element $s \in \ell^2(\omega)$ such that 
\[
\sum_{n} \widehat{s}(n) \conj{\widehat{\phi}(n)} =0, \qquad \forall \phi \in C^\infty_0(\T \setminus E).
\]
Since elements in $\ell^2(\omega)$ are distributions, we can re-express the above relation as the distributional pairing
\[
s(\conj{\phi})= 0, \qquad \forall \phi \in C^\infty_0(\T \setminus E),
\]
which forces $\supp{s} \subseteq E$. According to \thref{THM:Distrib} $E$ also supports a probability measure in $\ell^2(\omega)$, hence we conclude that $E$ has positive $\omega$-capacity.
\end{proof}

  We mention that the theory of Richter, Ross and Sundberg in \cite{ross1994hyperinvariant} on a complete characterizations of all translation invariant subspaces likely carries over to our setting of $\ell^2(\Omega)$. However, this lies outside the scope of our current work.

\section{Uniqueness sets for Dirichlet-type functions}\label{SEC:UNIQDIRICHLET}

\subsection{A capacitary characterization}


Here we devote our attention to proving \thref{THM:SUPPDIRICHLET}.
First, we shall need the following lemma.

\begin{lemma} \thlabel{LEM:SUPPDense} A compact set $E\subseteq \T$ supports a non-trivial $f\in L^2(E)$ with $\{\widehat{f}(n)\}_{n\in \mathbb{Z}}\in \ell^2(\Omega)$ if and only if 
\[
C_0^\infty(\T \setminus E) \quad \text{is not dense in} \quad \ell^2(\omega).
\]
\end{lemma}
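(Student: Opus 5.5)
This is a Hahn--Banach duality argument in the Cauchy pairing between $\ell^2(\omega)$ and $\ell^2(\Omega)$ (Section~\ref{SEC:CauchyDualitySec}), run in the same way as Step 3 of the proof of \thref{THM:BICYCLIC}, with the roles of the two spaces interchanged. Since $\ell^2(\Omega)$ is the dual of $\ell^2(\omega)$ under the pairing $\sum_n \widehat{u}(n)\conj{\widehat{f}(n)}$, the subspace $C_0^\infty(\T\setminus E)\subset C^\infty(\T)\subset \ell^2(\omega)$ fails to be dense in $\ell^2(\omega)$ exactly when some non-zero $f\in\ell^2(\Omega)$ annihilates it:
\[
\sum_{n\in\mathbb{Z}} \widehat{\phi}(n)\conj{\widehat{f}(n)} = 0, \qquad \phi\in C_0^\infty(\T\setminus E).
\]

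\emph{Non-density implies existence of $f$.} Take such an annihilator $f\neq 0$. Since $\ell^2(\Omega)\hookrightarrow \ell^2$, $f$ is a genuine $L^2(\T)$ function, and the pairing above equals $\int_\T \phi\,\conj{f}\,dm$ by Parseval. Its vanishing for all $\phi\in C_0^\infty(\T\setminus E)$ forces $f=0$ almost everywhere on the open set $\T\setminus E$, because smooth compactly supported functions are dense in $L^2$ of an open set. Hence $f\in L^2(E)$, it is non-trivial, and $\{\widehat{f}(n)\}_n\in\ell^2(\Omega)$.

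\emph{Existence of $f$ implies non-density.} Suppose $f\in L^2(E)$ is non-trivial with $\{\widehat{f}(n)\}_n\in\ell^2(\Omega)$. By Cauchy--Schwarz, $u\mapsto \sum_n \widehat{u}(n)\conj{\widehat{f}(n)}$ is a bounded functional on $\ell^2(\omega)$, and it is non-zero because the dual identification is injective. For instance, pairing with $u=f\cdot\omega$ coefficientwise gives $\norm{f}_{\ell^2(\Omega)}^2\cdot$ (positive weight). For $\phi\in C_0^\infty(\T\setminus E)$, Parseval gives $\int \phi\,\conj{f}\,dm=0$ because $\phi f\equiv 0$. So this non-zero functional kills the closure of $C_0^\infty(\T\setminus E)$, which therefore cannot be all of $\ell^2(\omega)$.

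The only real care point is to make sure the $\ell^2$ Cauchy pairing agrees with the $L^2(dm)$ integral for $\phi$ smooth and $f\in\ell^2(\Omega)\subset L^2$. This follows directly from Parseval, since both coefficient sequences lie in $\ell^2$. No capacity theory is needed for this lemma.
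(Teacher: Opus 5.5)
Your proposal is correct and follows the paper's proof: in both directions you use the Hahn--Banach duality between $\ell^2(\omega)$ and $\ell^2(\Omega)$ under the Cauchy pairing, identify the pairing with $\int \phi\,\conj{f}\,dm$ via Parseval, and use vanishing against $C_0^\infty(\T\setminus E)$ to force $f=0$ a.e.\ off $E$. One harmless slip: with $\widehat{u}(n)=\widehat{f}(n)\omega(|n|)$ the pairing equals $\norm{f}^2_{\ell^2(\omega)}$, not a multiple of $\norm{f}^2_{\ell^2(\Omega)}$; it is still positive, and the simplest choice $u=f$ gives $\norm{f}^2_{L^2}>0$.
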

\begin{proof} Assume $C_0^\infty(\T\setminus E)$ is dense in $\ell^2(\omega)$ and let $f\in L^2(E)$ with $\{\widehat{f}(n)\}_{n\in \mathbb{Z}}$ be arbitrary. Then since
\[
\int_{\T} f \conj{\phi} dm = \langle f, \phi \rangle =0, \qquad \phi \in C^\infty_0(\T \setminus E),
\]
it follows that $f\equiv 0$. Conversely, if $C^\infty_0(\T \setminus E)$ is not dense in $\ell^2(\omega)$, then by Hahn-Banach there must exist a non-trivial $L^2$-function $f \in \ell^2(\Omega)$ such that
\[
\int_{\T} f \conj{\phi} dm = \langle f, \phi \rangle =0,  \qquad \phi \in C^\infty_0(\T \setminus E).
\]
Regarding $fdm$ as a measure on $\T$, it then follows that $\supp{fdm}\subseteq E$, and hence $f$ is supported in $E$ and $\{\widehat{f}(n)\}_{n\in \mathbb{Z}}\in \ell^2(\Omega)$.
\end{proof}

Our next lemma is interesting in its own right and asserts that the linear span of all equilibrium measures associated with the collection of arcs in $\T$ is dense in $\ell^2(\omega)$.

\begin{lemma}\thlabel{LEM:EQUILIBDENSE} For any closed arc $I\subseteq \T$, let $\mu_I \in \ell^2(\omega)$ denote the equilibrium measure associated to $I$. Then the linear span of the set 
\[
\mathcal{P}_{\omega}:=\left\{ \mu_I: I\subseteq \T \right\}
\]
forms a dense subset of $\ell^2(\omega)$.
\end{lemma}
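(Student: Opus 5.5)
The plan is to argue by duality: the span of $\mathcal{P}_\omega$ is dense in $\ell^2(\omega)$ if and only if the only $g\in\ell^2(\Omega)\cong\ell^2(\omega)'$ that annihilates every $\mu_I$ in the Cauchy pairing is $g=0$. So let $g\in \ell^2(\Omega)$ satisfy $\int_{\T}\conj{g}\,d\mu_I=0$ for every closed arc $I$. The pairing may be written as an integral because $\mu_I$ has finite $\omega$-energy. For $g$ continuous this is clear. In general I would pass through Fejér means, or note that $\langle \mu_I, g\rangle = \sum_n \widehat{\mu_I}(n)\conj{\widehat g(n)}$ is the pairing of the potential $U_{\mu_I}\in\ell^2(\Omega)$ with $g$ in the Hilbert structure of $\ell^2(\Omega)$. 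This second view is the useful one. Since $\widehat{U_\mu}(n)=\omega(|n|)\widehat\mu(n)$, we get $\langle U_{\mu_I},g\rangle_{\ell^2(\Omega)}=\sum_n \widehat{\mu_I}(n)\conj{\widehat g(n)}$. Density of the $\mu_I$ in $\ell^2(\omega)$ is therefore equivalent to density of the equilibrium potentials $\{U_{\mu_I}\}$ in $\ell^2(\Omega)$.

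The key step is to identify the equilibrium potentials. By \thref{LEM:CapacityProperties}(iii), $U_{\mu_I}= 1/\operatorname{Cap}_\omega(I)$ on $I$ outside a set of zero capacity, which is Lebesgue-null. Moreover $U_{\mu_I}\le 1/\operatorname{Cap}_\omega(I)$ on $\supp{\mu_I}$, so by the maximum principle for monotone kernels (this is where $\kappa_W$ being even, non-increasing and continuous off the diagonal matters) $U_{\mu_I}\le 1/\operatorname{Cap}_\omega(I)$ everywhere. Hence $v_I:=\operatorname{Cap}_\omega(I)U_{\mu_I}$ is a function in $\ell^2(\Omega)$ with $0\le v_I\le 1$ and $v_I=1$ a.e. on $I$.

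Next I would use a simultaneous-approximation argument to show that such functions generate indicator-like functions of arcs. Take two nested arcs $I\subset J$ with $|J\setminus I|$ small. Then $v_J-v_I$ vanishes a.e. on $I$. The argument I would try first goes through the annihilation relations $\langle v_I,g\rangle_{\ell^2(\Omega)}=0$ themselves, and it amounts to showing that $\operatorname{span}\{v_I\}$ is translation invariant and contains a bounded approximate identity. Translation invariance is immediate, since rotating an arc rotates its equilibrium measure. By Wiener-type reasoning in the Hilbert space $\ell^2(\Omega)$ with the Fourier basis, a closed rotation-invariant subspace is spanned by the exponentials $\zeta^n$ it contains. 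Concretely, the closure of $\operatorname{span}\{v_I\}$ equals $\{f\in\ell^2(\Omega):\widehat f(n)=0,\ n\in Z\}$ for some set $Z\subseteq\mathbb{Z}$. It therefore suffices to show that for every $n$ there is an arc $I$ with $\widehat{\mu_I}(n)\neq0$.

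This last step is where I expect the main obstacle. I would argue by contradiction. If $\widehat{\mu_I}(n)=0$ for all arcs $I$, consider an arc $I_\delta$ of length $\delta\to0$ centered at $1$. Its normalized probability measure $\mu_{I_\delta}$ converges weakly to $\delta_1$, because it is supported in $I_\delta$ and has mass one. Then $\widehat{\mu_{I_\delta}}(n)\to 1$, a contradiction. Note that the equilibrium measure here is the probability minimizer, so it has total mass $1$. Hence $Z=\emptyset$ and the span is dense. What remains delicate is the rotation-invariant-subspace claim: it needs that rotation acts diagonally on the Fourier basis in both $\ell^2(\omega)$ and $\ell^2(\Omega)$. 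I would prove it by noting that if $g\perp$ all rotates of $\mu_I$, then $\sum_n \widehat{\mu_I}(n)\conj{\widehat g(n)}e^{-in\theta}=0$ for all $\theta$. This series converges absolutely by Cauchy--Schwarz, so every coefficient $\widehat{\mu_I}(n)\conj{\widehat g(n)}$ vanishes, giving $\widehat g(n)=0$ wherever some $\widehat{\mu_I}(n)\ne0$, that is, for all $n$.
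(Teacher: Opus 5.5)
Your closing argument is correct and is essentially the paper's proof: pair an annihilator $g\in\ell^2(\Omega)$ against all rotates of $\mu_I$, use that the absolutely convergent series $\sum_n \widehat{\mu_I}(n)\conj{\widehat g(n)}e^{-in\theta}$ vanishes for every $\theta$ to kill each coefficient, then choose short arcs centered at $1$ with $\widehat{\mu_I}(n)\neq 0$. The paper phrases the middle step through the Wiener algebra and the last through the bound $\Re\widehat{\mu_{I_r}}(\pm n)\geq\cos(rn/2)>0$ for $r|n|<\pi$ rather than weak convergence to $\delta_1$; your detour through the potentials $v_I$ and the maximum principle plays no role and can be dropped (monotonicity of the kernel alone does not give the maximum principle with constant $1$ anyway).
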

\begin{proof}
Arguing by duality, assume there exists $f\in \ell^2(\Omega)$ which annihilates the linear span of $\Po_\omega$:
\[
\sum_n \widehat{f}(n) \widehat{\mu_I}(-n)=0, \qquad I \subseteq \T.
\]
We must prove that $f\equiv 0$. The trick is that the notion of $\omega$-energy is translation-invariant, hence by uniqueness of equilibrium measures, we have that the translated measure $\tau_\theta \mu_I$ by $\theta$ is the equilibrium measure of the translated set $e^{i\theta}I$. For $0<r\leq 1$, let $I_r\subseteq \T$ denote the arc centered at $1$ of length $r$, and note that 
\[
\widehat{\mu_{e^{i\theta}I_r}}= e^{-in\theta} \widehat{\mu_{I_r}}(n),
\]
Then the annihilation condition of $f$ turns into 
\[
\sum_n \widehat{f}(n) \widehat{\mu_{I_r}}(-n) e^{-in\theta}=0, \qquad \theta \in [0,2\pi], \qquad 0<r<1.
\]
This means that the functions
\[
F_r(e^{i\theta}) := \sum_n \widehat{f}(n) \widehat{\mu_{I_r}}(-n)e^{-in\theta}, \qquad e^{i\theta} \in \T, \qquad 0<r<1
\]
belong to the Wiener algebra and vanish identically on $\T$. Indeed, we have 
\[
\sum_n \abs{\widehat{F_r}(n)} = \sum_n \abs{\widehat{f}(n) \widehat{\mu_{I_r}}(-n)} \leq \norm{f}_{\ell^2(\Omega)} \norm{\mu_{I_r}}_{\ell^2(\omega)}.
\]
We therefore conclude that $\widehat{f}(n) \widehat{\mu_{I_r}}(-n)=0$ for all $n\in \mathbb{Z}$ and $0<r\leq 1$. Since all the $\mu_{I_r}$ are probability measures, we must have $\widehat{f}(0)=0$. Next, fix an arbitrary integer $n\neq 0$, and pick $r>0$ with $r|n| < \pi $ and note that since
\[
\Re \widehat{\mu_{I_r}}(\pm n) = \int_{-r^/2}^{r/2} \cos(nt) d\mu_{I_r}(t) \geq \cos(rn/2) \mu_{I_r}(\T) >0,
\]
we have that $\widehat{\mu_{I_r}}(\pm n)\neq 0$, which at its turn forces $\widehat{f}(n)=0$. We therefore conclude that $\widehat{f}(n)=0$ for all $n$, hence $f=0$, and it follows that $\Po_\omega$ is dense in $\ell^2(\omega)$.
\end{proof}

We shall need one final lemma on smooth approximation of equilibrium measures from "within". 

\begin{lemma}\thlabel{LEM:APPROXBELOW} Assume that $C^\infty_0(\T \setminus E)$ is dense in $\ell^2(\omega)$. Then for any arc $I \subseteq \T$, there exist $\{\phi_j\}_j$ in $C^\infty_0(I \setminus E)$ such that $\phi_j dm$ are probability measures, and such that 
\[
\norm{\phi_j - \mu_I}_{\ell^2(\omega)} \to 0,
\]
where $\mu_I$ denotes the $\omega$-equilibrium measure of $I$.
\end{lemma}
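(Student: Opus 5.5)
We argue in three moves: approximate $\mu_I$ by a smooth probability density on $I$, move that density off $E$ using the density hypothesis, and then restore positivity, support in $I$, and unit mass without spoiling the $\ell^2(\omega)$ approximation.

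\emph{Smoothing.} Since $\mu_I$ is supported on $I$ and has finite $\omega$-energy, we first approximate it by smooth probability densities supported in the interior of $I$. Take slightly shrunken concentric arcs $I'\subset I$ and push $\mu_I$ into $I'$ by a dilation towards the center of $I$. Because $\kappa_W$ is non-increasing, a contraction only increases distances between points when rescaled back, and the energy changes continuously; alternatively one can use the monotone convergence of the energies of the equilibrium measures of $I'$ to that of $I$ (outer regularity, \thref{LEM:CapacityProperties}), together with uniqueness of the equilibrium measure and weak compactness. Afterwards convolve with a mollifier $\varphi_\delta$, which does not increase any Fourier coefficient in modulus. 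This yields $\psi\in C^\infty_0(\mathrm{int}\, I)$, $\psi\ge 0$, $\int\psi\,dm=1$, with $\|\psi-\mu_I\|_{\ell^2(\omega)}$ small. It therefore suffices to approximate such a $\psi$ by elements of $C^\infty_0(I\setminus E)$ that are probability densities.

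\emph{Transferring off $E$.} Choose a cutoff $\chi\in C^\infty_0(\mathrm{int}\, I)$ with $\chi=1$ on $\supp{\psi}$. By hypothesis there are $\eta_k\in C^\infty_0(\T\setminus E)$ with $\eta_k\to\psi$ in $\ell^2(\omega)$. Set $\tilde\eta_k:=\chi\,\eta_k\in C^\infty_0(I\setminus E)$. We need multiplication by the smooth function $\chi$ to be bounded on $\ell^2(\omega)$. This holds by duality: multiplication by $\bar\chi$ is bounded on $\ell^2(\Omega)$, by the Douglas formula of \thref{THM:DOUGLASFORMULA} and the product estimate used in the proof of \thref{THM:BICYCLIC}. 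Hence $\tilde\eta_k\to\chi\psi=\psi$ in $\ell^2(\omega)$. Since $\omega(0)>0$, convergence in $\ell^2(\omega)$ forces $\widehat{\tilde\eta_k}(0)\to 1$.

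\emph{Positivity: the main obstacle.} The functions $\tilde\eta_k$ need not be non-negative, and taking positive parts is not obviously continuous in the negative-order space $\ell^2(\omega)$. I would handle this by duality. Consider the closed convex cone $\mathcal C$ in $\ell^2(\omega)$ generated by non-negative $\phi\in C^\infty_0(I\setminus E)$. Suppose $\psi\notin \overline{\mathcal C}$. Then Hahn--Banach gives $g\in\ell^2(\Omega)$, which we may take real, with $\langle \phi, g\rangle\ge 0$ for all non-negative $\phi\in C^\infty_0(I\setminus E)$ and $\langle \psi, g\rangle<0$. The first condition says $g\ge 0$ almost everywhere on $I\setminus E$. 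Now set $g^-=\max(-g,0)$, which lies in $\ell^2(\Omega)$ by \thref{Lemma: max_Dirichlet}, and consider $\chi g^-$. This function vanishes almost everywhere on $I\setminus E$, so it is supported on $E\cap I$ modulo the boundary of $I$. On the other hand, $\langle\psi,g\rangle<0$ with $\psi\ge0$ forces $g^-\not\equiv0$ on $\supp\psi$, so $\chi g^-$ is non-trivial. This contradicts \thref{LEM:SUPPDense}, since by the density hypothesis $E$ supports no non-trivial element of $\ell^2(\Omega)$. If $E\cap I$ carries no $\ell^2(\Omega)$-mass, we also get $|E\cap I|=0$ from the same lemma, so the almost-everywhere statements are legitimate.

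Thus $\psi$ is an $\ell^2(\omega)$-limit of non-negative $\phi_j\in C^\infty_0(I\setminus E)$. Finally, normalize: $\widehat{\phi_j}(0)\to1$, so $\phi_j/\widehat{\phi_j}(0)$ are probability densities, still in $C^\infty_0(I\setminus E)$, converging to $\psi$. A diagonal argument over the choice of $\psi$ then gives convergence to $\mu_I$. The step I expect to need the most care is this positivity argument, specifically checking that $\chi g^-$ truly belongs to $\ell^2(\Omega)$ and is non-trivial.
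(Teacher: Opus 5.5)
Your proposal is correct and is essentially the paper's argument. A Hahn--Banach separation yields a real $g\in\ell^2(\Omega)$ with a sign on $I\setminus E$, and its truncation via \thref{Lemma: max_Dirichlet} gives a non-trivial element of $\ell^2(\Omega)$ carried by $E$, contradicting the density hypothesis. You localize with $\chi$ and invoke \thref{LEM:SUPPDense} where the paper uses its local-density Step 1, and your pre-smoothing of $\mu_I$ to $\psi$ makes the pairing $\langle \psi, g\rangle=\int \psi g\,dm$ literal; your ``transferring off $E$'' step is never used and can be dropped.

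Two side remarks need fixing:
\begin{enumerate}
\item The aside that the density hypothesis forces $|E\cap I|=0$ is false, since \thref{THM:SUPPDIRICHLET} is exactly about sets of positive measure with this property. Your argument does not rely on it.
\item The smoothing step is not justified by monotonicity of $\kappa_W$, which bounds the dilated energy from below (the wrong direction), nor by outer regularity, which concerns decreasing sets. Justify it instead by dominated convergence, using the domination $\kappa_W(t/2)\le 4\kappa_W(t)$. This follows from $1-2r\cos t+r^2\le 4\left(1-2r\cos(t/2)+r^2\right)$. Combine it with weak convergence and convergence of norms in $\ell^2(\omega)$.
\end{enumerate}
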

\begin{proof}
\proofpart{1}{Local approximation:}
We first claim that for any closed arc $I \subseteq \T$, the closure of $C^\infty_0(I\setminus E)$ in the norm of $\ell^2(\omega)$ equals 
\[
S_I:=\{S\in \ell^2(\omega): \, \supp{S} \subseteq I \}.
\]
Since $C^\infty_0(I)$ is easily seen to be dense in $S_I$, it suffices to show that any $\phi \in C^\infty_0(I)$ can be approximated by functions in $C^\infty_0(I\setminus E)$. However, this essentially follows from the assumption that $C^\infty_0(\T\setminus E)$ is dense in $\ell^2(\omega)$. Indeed, approximate the constant $1$ by $(f_j)_j$ belonging to $C^\infty_0(\T \setminus E)$ in the $\ell^2(\omega)$-norm and multiply with any desirable function $\phi \in C^\infty_0(I)$. Then $\phi_j := \phi f_j \to \phi$ in $\ell^2(\omega)$, which settles the claim.
\proofpart{2}{A Hahn-Banach separation argument:}
Fix a closed arc $I\subseteq \T$ and consider the convex set
\[
\mathcal{C}_I := \{ f \in C_0(I \setminus E): fdm \quad \text{probability measure on} \, \, \T \}.
\]
Note if we can show that $\mu_I$ belongs to closure of $\mathcal{C}_I$ in the norm of $\ell^2(\omega)$, then a convolution with a standard approximation of identity also gives that we can pass $\phi_j \in C^\infty_0(I \setminus E)$ with $\phi_j dm$ probability measures such that
\[
\phi_j \to \mu_I \quad \text{in} \quad \ell^2(\omega).
\]

Arguing by contradiction, suppose this is not the case. Then by the Hahn-Banach separation Theorem there exists a real-valued linear functional $g\in \ell^2(\Omega)$ such that 
\[
\int_{I} g d\mu_I > 0, \qquad \int_{I} gf dm \leq 0, \qquad f\in \mathcal{C}_I.
\]
Plugging in approximates of the identities as $f$'s supported in $I\setminus E$, the second condition implies that, we must have $g\leq 0$ $dm$-a.e on $I \setminus E$. However, the condition $\int_{\T} gd\mu_I>0$  implies that the positive part $g_+$ of $g$ is non-trivial on $I$, and consequently we must have $g_+=0$ $dm$-a.e on $I \setminus E$, which readily implies that 
\[
\int_{I}g_+ f dm =0, \qquad f\in \mathcal{C}_I.
\]

Note that this forces $I \cap \supp{g_+ dm}\subseteq I \cap E$ and $g_+ \in \ell^2(\Omega)$ (the Douglas formula and that $\max{(f,g)} \in \ell^2(\Omega)$ whenever $f,g \in \ell^2(\Omega)$. Decomposing functions in $C_0(I \setminus E)$ into positive and negative parts (this is why we introduced $C_0(I\setminus E)$ in defining the set $\mathcal{C}_I$), we get
\[
\int_{\T} g_+ \phi dm =0, \qquad \phi \in C_0(I \setminus E).
\]
Since $C_0(I \setminus E)$ is dense in $S_I$ by step $1$, we conclude that $g_+ \in \ell^2(\Omega)$ which vanishes identically on $I$. This obviously contradicts that
\[
\int_{\T} g d\mu_I >0,
\] 
and thus establishes the claim.
\end{proof}
We are now ready to prove our main result.
\begin{proof}[Proof of \thref{THM:SUPPDIRICHLET}]
Note that according to \thref{LEM:SUPPDense}, it suffices to show the equivalence between $(ii)$, $(iii)$ and the statement: 
\begin{equation*}\tag{$i'$}
C_0^\infty(\T \setminus E) \quad \text{is dense in} \quad \ell^2(\omega).
\end{equation*}
\proofpart{1}{$(ii) \implies (i')$:}
We shall need the following useful observation. If $\mu_I$ is the $\omega$-equilibrium measure for $I$, then for any probability measure $\nu$ supported in $I$ that belongs to $\ell^2(\omega)$, we have
\[
\norm{(1-t)\mu_I+ t\nu}^2_{\ell^2(\omega)} \geq \norm{\mu_I}^2_{\ell^2(\omega)}, \qquad 0\leq t \leq 1.
\]
Expanding the product and minimizing the quadratic in $t$, we conclude that
\[
\Re \, \langle \mu_I, \nu \rangle_{\ell^2(\omega)}\geq \norm{\mu_I}^2_{\ell^2(\omega)},
\]
for any probability measure $\nu$ in $\ell^2(\omega)$, which in addition is supported in $I$. From this, we get that
\[
\norm{\mu_I- \nu}^2_{\ell^2(\omega)} \leq \norm{\nu}^2_{\ell^2(\omega)}-\norm{\mu_I}^2_{\ell^2(\omega)}.
\]
The above estimate implies that if condition $(ii)$ of the theorem holds, then we can find probability measures $\{\nu_j\}_j$ belonging to $\ell^2(\omega)$ and compactly supported in $I\setminus E$ such that
\[
\norm{\nu_j-\mu_I}_{\ell^2(\omega)} \to 0.
\]
Now by means of taking convolution of the $\nu_j$ by smooth approximates of the identity, and re-normalizing, we may assume that $\nu_j$ are also smooth on $\T$. As a consequence, it follows that the $\omega$-equilibrium measure of any arc $I\subseteq \T$ belongs to the closure of $C^\infty_0(\T\setminus E)$ in $\ell^2(\omega)$. The conclusion now follows from \thref{LEM:EQUILIBDENSE}.

\proofpart{2}{$(i') \implies (iii):$}
If $C^\infty_0(\T \setminus E)$ is dense in $\ell^2(\omega)$, then it follows from \thref{LEM:APPROXBELOW} that there exists $\{\phi_j\}_j$ in $C^\infty_0(\T \setminus E)$ such that $\phi_j dm$ are probability measures with 
\[
\norm{\phi_j -m}_{\ell^2(\omega)} \to 0
\]
where the unit-normalized Lebesgue measure $m$ is the equilibrium measure of $\T$.

\proofpart{3}{$(iii)\implies (ii)$:}
 Suppose that $\operatorname{Cap}_\omega(\T\setminus E)= \operatorname{Cap}_\omega(\T)$. Since the $\omega$-equilibrium measure of $\T$ is the Lebesgue measure $dm$, we have by assumption that there exist probability measures $\{\nu_j\}_j$ compactly supported in $\T\setminus E$, such that 
\[
\nu_j \to m \qquad{in} \quad \ell^2(\omega).
\]
Now convolving with smooth approximates to the identity and re-normalizing, we may in addition assume that $\nu_j$ are smooth on $\T$. Since multiplication by $\phi \in C^\infty(\T)$ is continuous on $\ell^2(\omega)$, it follows that
\[
\phi \nu_j \to \phi \quad \text{in} \quad \ell^2(\omega),
\]
hence $C^\infty(\T)$ belongs to the closure of $C^\infty_0(\T \setminus E)$ in $\ell^2(\omega)$. But since smooth functions are dense in $\ell^2(\omega)$, it follows that $C^\infty_0(\T \setminus E)$ is dense in $\ell^2(\omega)$. But appealing to \thref{LEM:APPROXBELOW} again, we can for any arc $I\subset \T$, find $\{\phi_j\}_j$ in $C^\infty_0(I\setminus E)$ such that $\phi_j dm$ are probability measures with 
\[
\norm{\phi_j-\mu_I}_{\ell^2(\omega)} \to 0.
\]
But then it follows that the condition $(ii)$ holds.

This completes the proof.
\end{proof}

\subsection{Unilateral versus bilateral uniqueness problem}

Here, we shall prove a discrepancy between the unilateral and the bilateral problem in \thref{THM:IMPKHRU}, as well as argue for why it is sharp. Our approach is based on a simultaneous approximation argument and is roughly summarized in the following proposition.

\begin{prop}\thlabel{PROP:SAUNIBI} Let $\{\omega(n)\}_{n\geq 0}$ be a non-increasing sequence of real numbers tending to $0$, such that 
\begin{equation}\label{EQ:omegalog}
\inf_{n > 1} \omega(n)\log n = 0.
\end{equation}
Then there exists a compact set $E \subset \T$ of positive Lebesgue measure such that the following conditions hold:
\begin{enumerate}
    \item[(i)] $C^\infty_0(\T \setminus E)$ is dense in $\ell^2(\omega)$,
    \item[(ii)] $E$ has finite Beurling--Carleson entropy:
    \[
    \sum_k |I_k| \log \frac{1}{|I_k|} < +\infty,
    \]
    where $\{I_k\}_k$ are the connected components of $\T\setminus E$.
\end{enumerate}    
\end{prop}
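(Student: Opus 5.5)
Our plan is to build $E$ as the complement of a union of disjoint open arcs $\{I_k\}_k$, chosen in blocks. Block $j$ lives at a scale $n_j$ picked from \eqref{EQ:omegalog} so that $\omega(n_j)\log n_j \le \varepsilon_j$, with $\varepsilon_j \downarrow 0$ very fast. Each block will be a periodic family of arcs spread over the whole circle, at spacing roughly $1/N_j$ and of tiny length $\ell_j \approx n_j^{-1}$.

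For density in $(i)$, it suffices to approximate the constant $1$ in $\ell^2(\omega)$ by smooth functions $\psi$ supported in $\T\setminus E$. Indeed, multiplication by any fixed $\phi\in C^\infty(\T)$ is bounded on $\ell^2(\omega)$, so $\phi\psi\to\phi$, and $C^\infty(\T)$ is dense in $\ell^2(\omega)$.

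For block $j$ we take
\[
\psi_j(\zeta)=\frac{1}{N_j\ell_j}\sum_{m=1}^{N_j}\chi\Bigl(\frac{\zeta\,\conj{\xi_m}-1}{\ell_j}\Bigr),
\]
a normalized sum of smooth bumps centered at the equidistributed points $\xi_m=e^{2\pi i m/N_j}$. The bumps sit in the gaps of block $j$, and $\chi$ is a fixed bump with integral one. Because the bumps are equally spaced, $\widehat{\psi_j}(n)$ vanishes unless $N_j \mid n$. When $N_j\mid n$ we have $\widehat{\psi_j}(n)=\widehat{\chi}(n\ell_j)$, up to normalization. Hence
\[
\norm{\psi_j-1}^2_{\ell^2(\omega)} \lesssim \sum_{k\ge1}\omega(kN_j)\,|\widehat\chi(kN_j\ell_j)|^2 \lesssim \omega(N_j)\cdot\frac{1}{N_j\ell_j},
\]
using monotonicity of $\omega$ and the decay of $\widehat\chi$. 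Taking $N_j=n_j/\log n_j$ and $\ell_j=1/n_j$, this becomes $\omega(N_j)\log n_j$. I expect this to be controllable, since we may compare $\omega(N_j)$ with $\omega(n_j)$ after replacing $n_j$ by a slightly smaller scale. This is the delicate step: I would instead choose $N_j$ first with $\omega(N_j)\log N_j\le\varepsilon_j$, and then set $\ell_j = 1/(N_j\log N_j)$. With that choice the error is at most $\omega(N_j)\log N_j \le \varepsilon_j\to 0$.

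For the entropy in $(ii)$, block $j$ contributes
\[
N_j\ell_j\log\frac1{\ell_j} \approx \frac{\log N_j+\log\log N_j}{\log N_j} \approx 1,
\]
which is not summable. So the main obstacle is to balance density against entropy. The fix I would try is to let block $j$ cover only a fraction of the circle, or equivalently to use the freedom in \eqref{EQ:omegalog} to take arcs of length $\ell_j = a_j/(N_j\log N_j)$ with $\sum a_j<\infty$. Block $j$ then contributes entropy of order $a_j$ and total length $a_j/\log N_j$, so $|E|>0$. The density error becomes $\omega(N_j)\log N_j/a_j$, and we choose $N_j$ so that $\omega(N_j)\log N_j\le a_j\varepsilon_j$, which is possible since the infimum in \eqref{EQ:omegalog} is zero. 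Finally, we shrink each bump so that it lies strictly inside its gap, and we keep the blocks disjoint by a small rotation, which costs nothing in the estimates. Then $\psi_j\in C^\infty_0(\T\setminus E)$ and $\psi_j\to1$ in $\ell^2(\omega)$. $E$ is compact since $\T\setminus E$ is open, and $|E|\ge 1-\sum_j a_j/\log N_j>0$.
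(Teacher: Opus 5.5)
Your construction is the paper's own. Its building block $\phi_{\delta,N}$ is your $\psi_j$ with $\delta=N\ell$, the total length of a block. It proves the same bound $\|\phi_{\delta,N}-1\|^2_{\ell^2(\omega)}\lesssim \omega(N)/\delta$ from the Fourier support on $N\mathbb{Z}$. Its choice $\omega(N_j)\log N_j\le 4^{-j}$, $\delta_j=1/(3^j\lceil\log N_j\rceil)$ is your scheme with $a_j=3^{-j}$ and $\varepsilon_j=(3/4)^j$.

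One step is false, although the proof does not need it: you cannot keep the blocks disjoint by rotating them. Here $N_j\to\infty$ is forced, since $\omega(N_j)\log N_j\to 0$ while each $\omega(n)\log n>0$. Once $2\pi/N_j$ is smaller than the length of an arc from an earlier block, that arc contains a centre $\xi_m e^{i\theta}$ of block $j$ for every rotation $\theta$. So overlaps are unavoidable.

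Replace disjointness by the subadditivity argument the paper uses. Each component $J$ of $\T\setminus E$ is the union of the arcs $I\subseteq J$ coming from all blocks. Then $|J|\le\sum_{I\subseteq J}|I|$ and $|I|\le |J|$ give
\[
|J|\log\frac{1}{|J|}\le\sum_{I\subseteq J}|I|\log\frac{1}{|I|},
\]
so the entropy of $E$ is at most $\sum_j N_j\ell_j\log(1/\ell_j)$, whatever the overlaps.

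A smaller point: $N_j\ell_j\log(1/\ell_j)=a_j\bigl(1+(\log\log N_j+\log(1/a_j))/\log N_j\bigr)$. So your claim that block $j$ contributes entropy "of order $a_j$" needs $\log N_j\gtrsim \log(1/a_j)$. This is available, because the infimum condition gives arbitrarily large admissible $N_j$. Alternatively, take $a_j$ geometric as the paper does, so that $\sum_j a_j\log(1/a_j)<\infty$.
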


Note that \eqref{EQ:omegalog} is just a reformulation of the principal condition \eqref{EQ:OMEGACOND} in \thref{THM:IMPKHRU}. It follows from \thref{LEM:SUPPDense} that the condition $(i)$ is equivalent to the fact that for any non-trivial $f\in L^2(E)$ we have
\[
\sum_{n\in \mathbb{Z}} \abs{\widehat{f}(n)}^2 \Omega(|n|) =+ \infty,
\]
Meanwhile, the condition $(ii)$ implies that there exists a non-trivial $f\in L^2(E)$ whose Cauchy integral belongs to $C^\infty(\T)$. See \cite{khrushchev1978problem}. These observations imply that \thref{THM:IMPKHRU} is an immediate corollary of \thref{PROP:SAUNIBI}.


\begin{proof}[Proof of \thref{PROP:SAUNIBI}]
\proofpart{1}{Building-blocks:}
For any $0<\delta<1$, let $I_\delta \subset \T$ denote the open arc of length $\delta$ centered at $\zeta=1$.

Fix a smooth non-negative $C^\infty(\T)$-function $\phi$ with
\[
\int_{\T} \phi dm=1, \qquad \supp{\phi} \subseteq I_{1/2}.
\]
For numbers $0<\delta<1/2$, and integers $N>0$, we consider the family of smooth functions on $\T$ defined by
\[
\phi_{\delta,N}(\zeta) := \frac{1_{I_\delta}(\zeta^N)}{\delta} \phi(\zeta^{N/\delta}), \qquad \zeta \in \T.
\]
In other words, we have first localized $\phi$ by $\delta$, and then compose it with $\zeta^N$. The following properties are straightforward to verify:
\begin{enumerate}
    \item[(i)] $\int_{\T} \phi_{\delta,N}dm=1$,
    \item[(ii)] $\supp{\phi_{\delta,N}}$ is contained in the open set $U_{\delta,N}$, which is a union of $N$ open arcs, each of length $\delta/N$,
    \item[(iii)] $\widehat{\phi_{\delta,N}}(n)=0$ if $N\nmid n$ and there exists an absolute constant $C>0$ such that:
    \[
    \abs{\widehat{\phi_{\delta,N}}(Nn)} \leq C\min  \left( 1, \frac{1}{\delta |n|} \right), \qquad n\neq 0.
    \]
\end{enumerate}
Now for a suitable choice of parameters $\{\delta_j\}_j$ and integers $\{N_j\}_j$, the idea is to construct the compact subset $E\subset \T$ of the form:
\begin{equation}\label{DEF:E}
E= \bigcap_{j= 1}^\infty \T \setminus U_{\delta_j,N_j}.
\end{equation}
To estimate the Beurling--Carleson entropy of $E$, note that if $\{J_k\}_k$ are the connected components of $\T \setminus E$, and $\{I_{j}\}_{j}$ is an enumeration of the arcs forming $\{ U_{\delta_j,N_j}\}_j$, we have
\[
\sum_k |J_k| \log \frac{1}{|J_k|} \leq \sum_{k} \sum_{j:I_j \subseteq J_k} |I_j| \log \frac{1}{|J_k|} \leq \sum_{j}  |I_j|  \log \frac{1}{|I_j|} = \sum_j \delta_j \log \frac{1}{\delta_j/N_j}.
\]
We proceed with showing that $C_0^\infty(\T \setminus E)$ is dense in $\ell^2(\omega)$. Note that since the trigonometric polynomials are dense in $\ell^2(\omega)$ and $C_0^\infty(\T \setminus E)$ is translation invariant, it suffices to show that $1$ belongs to the closure of $C_0^\infty(\T \setminus E)$ in $\ell^2(\omega)$. To this end, we shall for brevity, set $\phi_j := \phi_{\delta_j,N_j}$, and note that each $\phi_j \in C^\infty_0(\T \setminus E)$. Furthermore, we have 
\begin{multline}\label{EQ:normest}
\norm{\phi_j-1}^2_{\ell^2(\omega)} = 2\sum_{n\geq 1} \abs{\widehat{\phi_j}(N_jn)}^2 \omega(N_jn) \\
\leq 2C^2 \sum_{n\geq 1} \min \left(1, \frac{1}{\delta^2 n^2} \right) \omega(N_jn)  \\ \leq 2C^2 \sum_{1\leq n \leq 1/\delta_j} \omega(N_j n) + \frac{1}{\delta_j^2}\sum_{n \geq 1/\delta_j} \frac{\omega(N_jn)}{n^2} \leq 4C^2 \frac{\omega(N_j)}{\delta_j}.
\end{multline}

Now using the assumption \eqref{EQ:omegalog}, we can find an increasing sequence of integers $\{N_j\}_j$ tending to infinity, such that
\[
\omega(N_j) \log N_j \leq 4^{-j}, \qquad j=1,2, \dots.
\]
Now set
\[
\delta_j := \frac{1}{3^j \lceil \log N_j \rceil}, \qquad j=1,2, \dots
\]
which readily satisfies
\[
\sum_j \delta_j <1, \qquad \sum_j \delta_j \log \frac{1}{\delta_j} < \infty.
\]

Using these parameters $\{N_j\}_j, \{\delta_j\}_j$, we form the corresponding compact set $E\subset \T$ as in \eqref{DEF:E}. Now it follows from \eqref{EQ:normest} that
\[
\norm{\phi_j-1}^2_{\ell^2(\omega)} \lesssim \frac{\omega(N_j)}{\delta_j} \lesssim 4^{-j} 3^j \to 0, \qquad j\to \infty.
\]
This implies that $C^\infty_0(\T \setminus E)$ is dense in $\ell^2(\omega)$, hence establishes $(i)$. Furthermore, since
\[
\sum_j \delta_j \log \frac{N_j}{\delta_j} \lesssim \sum_j \delta_j \log \frac{1}{\delta_j} + \sum_j 3^{-j} <\infty,
\]
we conclude that the corresponding set $E$ in \eqref{DEF:E} has positive Lebesgue measure, and finite Beurling--Carleson entropy. This proves $(ii)$, hence completes the proof. \qedhere

\end{proof}

In order to show that the condition \eqref{EQ:OMEGACOND} is sharp in \thref{THM:IMPKHRU}, it suffices to note that by Khrushchev's Theorem, if $E$ satisfies the property in $(ii)$ of \thref{THM:IMPKHRU}, then $E$ contains a compact subset $K$ of finite Beurling--Carleson entropy. Sharpness then follows from the following proposition.

\begin{prop}\thlabel{PROP:ENTROPYDECAY} Let $E\subset \T$ be a compact set of positive Lebesgue measure, and with finite Beurling--Carleson entropy. Then the indicator function $1_E$ satisfies the Fourier decay
\[
\sum_{n\in \mathbb{Z}} \log(1+|n|) \abs{\widehat{1_E}(n)}^2 < \infty.
\]

\end{prop}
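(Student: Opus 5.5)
We will use the Douglas-type formula in its classical logarithmic form. With $h(t)\asymp 1/t$, a weight for which the table following \thref{THM:DOUGLASFORMULA} gives $\Omega(n)\asymp\log n$, the computation preceding \thref{PROP:hasymp} shows that for any $f\in L^2(\T)$
\[
\sum_{n\neq 0}\abs{\widehat f(n)}^2 \int_0^{\pi}\sin^2(nx/2)\,\frac{dx}{x} \asymp \int_{-\pi}^{\pi}\int_{-\pi}^{\pi} \frac{\abs{f(e^{it})-f(e^{is})}^2}{|t-s|}\,ds\,dt .
\]
Moreover $\int_0^\pi \sin^2(nx/2)\,dx/x\asymp \log(1+|n|)$ for $n\neq0$. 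This is elementary: bound the integral by $n^2x^2$ on $(0,1/n)$ and by $1$ on $(1/n,\pi)$, and use averaging of $\sin^2$ over periods for the lower bound. Hence it suffices to show that the double integral is finite for $f=1_E$.

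For $f=1_E$ the integrand is nonzero only when exactly one of $e^{it},e^{is}$ lies in $E$. By symmetry, the double integral is therefore twice
\[
\int_{\T\setminus E}\int_{E}\frac{ds\,dt}{|t-s|}\le \sum_k \int_{J_k}\int_{\T\setminus J_k}\frac{ds\,dt}{|t-s|},
\]
where $\{J_k\}_k$ are the complementary arcs of $E$. Here we use $E\subseteq \T\setminus J_k$ and interpret $|t-s|$ as the distance on the circle, which is comparable to $|t-s|$ for $t,s\in(-\pi,\pi]$ up to wrap-around. That wrap-around is harmless: if $|t-s|$ is large then the kernel is bounded.

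The key estimate is that for an arc $J$ of length $\ell$,
\[
\int_J\int_{\T\setminus J}\frac{ds\,dt}{d(t,s)} \lesssim \ell\log\frac{e}{\ell}.
\]
To see this, fix $t\in J$ at distance $d(t)$ from the nearer endpoint. Then $\int_{\T\setminus J} ds/d(t,s)\lesssim \log(e\pi/d(t))$, since the set $\T\setminus J$ lies at distance at least $d(t)$ from $t$ on both sides. Integrating over $t\in J$ gives $\int_0^{\ell}\log(e\pi/d)\,dd\lesssim \ell\log(e/\ell)$.

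Summing over $k$ bounds the double integral by $\sum_k |J_k|\log(e/|J_k|)$. This is finite by the entropy assumption together with $\sum_k|J_k|\le 2\pi$. The finite Beurling--Carleson entropy of $E$ thus yields $\sum_n\log(1+|n|)\abs{\widehat{1_E}(n)}^2<\infty$.

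The only points needing care are the asymptotic $\int_0^\pi \sin^2(nx/2)\,dx/x\asymp\log(1+|n|)$ and the single-arc estimate. Both are routine, so I expect no real obstacle. Note in particular that positivity of $|E|$ plays no role in the estimate itself.
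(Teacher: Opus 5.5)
Your proof is correct and is essentially the paper's argument: the paper also reduces, via Parseval--Tonelli, to the finiteness of $\int_0^1 |(E+t)\Delta E|\,dt/t$ and then uses $|(E+t)\Delta E|\le 2\sum_k \min(t,|I_k|)$, which is your arc-by-arc estimate with the order of integration swapped; both give the bound $\sum_k |I_k|\bigl(1+\log(1/|I_k|)\bigr)$. Two cosmetic corrections, neither affecting the conclusion: with radian arc lengths your single-arc bound should read $\ell\log(2\pi e/\ell)$ rather than $\ell\log(e/\ell)$, and it is cleaner to use the circular distance from the start (for which the Parseval identity is exact) instead of invoking a wrap-around comparison.
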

\begin{proof}
Note that a simple application of Parseval and Fubini--Tonelli gives:
\[
\int_0^{1} \frac{\abs{(E+t)\Delta E}}{t}dt = \int_0^1 \int_{\T} \abs{1_E(\zeta e^{it})-1_E(\zeta)}^2 dm(\zeta) \frac{dt}{t} \asymp \sum_{n \neq 0} \abs{\widehat{1_E}(n)}^2 \int_0^{1} \frac{\sin^2(n\pi t)}{t}dt,
\]
where $E \Delta F = (E\setminus F) \cup (F \setminus E)$ denotes the symmetric difference of subsets $E,F\subset \T$. Using a change of variable, trigonometric formulas and decomposing the integral on appropriate intervals, one can easily show that
\[
\int_0^1\frac{\sin^2(n\pi t)}{t}dt = \int_0^{n\pi} \frac{\sin^2(t)}{t}dt \gtrsim \log (1+n), \qquad n\geq 1.
\]
We actually have comparability, but this is not needed here. Now using the simple estimate 
\[
\abs{(E+t)\Delta E} =2 |(E+t)\cap \T \setminus E| \leq 2 \sum_k \min (t, \abs{I_k}), 
\]
where $\{I_k\}_k$ are the connected components of $\T \setminus E$, we get
\[
\int_0^{1} \frac{\abs{(E+t)\Delta E}}{t}dt \leq 2 \sum_k \int_0^1 \frac{\min(t,|I_k|)}{t} dt = 2 \sum_k \abs{I_k}\left(1+ \log \frac{1}{|I_k|}\right)< \infty.
\]
This establishes the desired claim.
\end{proof}

\subsection{A simple proof of the Ahlfors--Beurling Theorem} \label{SSEC:BATHM}

Here, we exhibit an application of our developments in order to give a simple proof of the Ahlfors--Beurling \thref{THM:A-BTHM}. 

\begin{proof}[New proof of \thref{THM:A-BTHM}]
Let $\Omega(n)= (1+n)$, and note that in view of \thref{THM:SUPPDIRICHLET} it suffices to show that $E$ supports a non-trivial element $f \in \ell^2(\Omega)$ if and only if there exists a non-constant holomorphic function $F$ in $\C \setminus E$ with
\[
\int_{\C \setminus E } \abs{F'(z)}^2 dA(z) < \infty.
\]
\proofpart{1}{Cauchy integral extension:}
Suppose $E$ supports a (real-valued) $f \in \ell^2(\Omega)$ and consider the Cauchy integral
\[
F(z) := \Ka(f)(z) = \int_E \frac{\zeta f(\zeta)}{\zeta -z} dm(\zeta), \qquad z\in \C \setminus E.
\]
Then $F$ is clearly analytic in $\C \setminus E$. Now since 
\[
F(z) = \sum_{n\geq 0} \widehat{f}(n) z^n, \qquad |z|<1,
\]
we immediately see that the assumption $f\in \ell^2(\Omega)$ implies
\[
\int_{|z|<1} \abs{F'(z)}^2 dA(z) = \sum_{n\geq 0} n\abs{\widehat{f}(n)}^2< \infty.
\]
It thus only remains to show that
\[
\int_{|z|>1} \abs{F'(z)}^2 dA(z) < \infty.
\]
To this end, note that we have the identity:
\[
\Ka(f)'(z)  = \frac{1}{z^2} \conj{\Ka(f)'(1/\conj{z})}, \qquad  |z|>1.
\]
Hence, applying the change of variable $z \mapsto 1/\conj{z}$, we get
\begin{multline*}
\int_{|z|>1} |F'(z)|^2 dA(z) = \int_{|z|>1} |\Ka(f)'(1/\conj{z})|^2 \frac{dA(z)}{|z|^4} \\
= \int_{|\lambda|<1} |\Ka(f)'(\lambda)|^2 dA(\lambda) = \sum_{n\geq 0} n|\widehat{f}(n)|^2 < \infty.
\end{multline*}
This proves that $F$ has the required properties, hence $E$ is not removable for $W^{1,2}_a$.

\proofpart{2}{Fatou Jump Theorem:}
Conversely, let $F$ be a non-trivial element in $W^{1,2}$ which is analytic in $\C \setminus E$, and note that analyticity allows us to write
\[
F(z) = \sum_{n\geq 0} a_n z^n, \qquad |z|<1, \qquad F(z) = \sum_{n\geq 0} \frac{b_n}{z^n}, \qquad |z|>1. 
\]
With these expansions at hand, it is easy to see that the assumption $F\in W^{1,2}$ implies 
\[
\sum_{n\geq 0}n\left( |a_n|^2 + |b_n|^2\right) < \infty.
\]
Now consider the formal limit
\[
f(\zeta) := \lim_{r\to 1-} F(r\zeta) - F(\zeta/r), \qquad \zeta \in \T,
\]
which by the dominated convergence theorem defines an element in $\ell^2(\Omega)$. Furthermore, since $F$ is analytic in $\C \setminus E$, we also have that $\supp{fdm}\subseteq E$. If $f$ were to be zero, then since 
\[
\widehat{f}(n) = \begin{cases} a_n, \qquad n>0 \\
- b_n \qquad n<0
    
\end{cases}
\]
we would consequently have that $F=a_0$ on $|z|<1$ and $F=b_0$ on $|z|>1$. Now since $F$ is analytic across $\T \setminus E$ we must have $a_0=b_0$, and hence $F$ is constant contradicting the assumption that $F$ was non-constant. We have thus produced a non-trivial $f \in \ell^2(\Omega)$ which in addition is supported in $E$. 

\end{proof}

\subsection*{Declaration of AI-assistance} The mathematical content of this paper has progressively and solely been developed by the authors. At the final stages, LLMs were implemented in order to locate small errors and other minor inconsistencies.

\bibliographystyle{siam}
\bibliography{mybib}

\begin{thebibliography}{10}

\bibitem{ahlfors1950conformal}
{\sc L.~Ahlfors and A.~Beurling}, {\em {Conformal invariants and function-theoretic null-sets}}, Acta Mathematica, 83 (1950), pp.~101--129.

\bibitem{beneteau2020simultaneous}
{\sc C.~B{\'e}n{\'e}teau, O.~Ivrii, M.~Manolaki, and D.~Seco}, {\em {Simultaneous zero-free approximation and universal optimal polynomial approximants}}, Journal of Approximation Theory, 256 (2020), p.~105389.

\bibitem{beurling1949spectres}
{\sc A.~BEURLING}, {\em {Sur les spectres des fonctions\^{} Colloques internationaux}}, Centre National de la Recherche Scientifique, Paris,  (1949).

\bibitem{brown1984cyclic}
{\sc L.~Brown and A.~L. Shields}, {\em Cyclic vectors in the dirichlet space}, Trans. Amer. Math. Soc, 285 (1984).

\bibitem{carlesonuniqueness}
{\sc L.~Carleson}, {\em Sets of uniqueness for functions regular in the unit circle}, Acta mathematica, 87 (1952), pp.~325--345.

\bibitem{chalmoukis2022totally}
{\sc N.~Chalmoukis and M.~Hartz}, {\em {Totally null sets and capacity in Dirichlet type spaces}}, Journal of the London Mathematical Society, 106 (2022), pp.~2030--2049.

\bibitem{chalmoukis2024potential}
\leavevmode\vrule height 2pt depth -1.6pt width 23pt, {\em {Potential theory and boundary behavior in the Drury-Arveson space}}, arXiv preprint arXiv:2410.07773,  (2024).

\bibitem{deleeuw1970two}
{\sc K.~de~Leeuw and Y.~Katznelson}, {\em The two sides of a {F}ourier-{S}tieltjes transform and almost idempotent measures}, Israel Journal of Mathematics, 8 (1970), pp.~213--229.

\bibitem{dirichletspaceprimer}
{\sc O.~El-Fallah, K.~Kellay, J.~Mashreghi, and T.~Ransford}, {\em A primer on the {D}irichlet space}, vol.~203 of Cambridge Tracts in Mathematics, Cambridge University Press, Cambridge, 2014.

\bibitem{hedberg1974removable}
{\sc L.~I. Hedberg}, {\em {Removable singularities and condenser capacities}}, Arkiv f{\"o}r Matematik, 12 (1974), pp.~181--201.

\bibitem{khrushchev1978problem}
{\sc S.~V. Khrushchev}, {\em The problem of simultaneous approximation and of removal of the singularities of {C}auchy type integrals}, Trudy Matematicheskogo Instituta imeni VA Steklova, 130 (1978), pp.~124--195.

\bibitem{kozma2013singular}
{\sc G.~Kozma and A.~Olevskii}, {\em {Singular distributions, dimension of support, and symmetry of Fourier transform}}, Annales de l'Institut Fourier, 63 (2013), pp.~1205--1226.

\bibitem{LevOlevskiiPS}
{\sc N.~Lev and A.~Olevskii}, {\em Piatetski-{S}hapiro phenomenon in the uniqueness problem}, C. R. Math. Acad. Sci. Paris, 340 (2005), pp.~793--798.

\bibitem{lev2011wiener}
\leavevmode\vrule height 2pt depth -1.6pt width 23pt, {\em {Wiener's' closure of translates' problem and Piatetski-Shapiro's uniqueness phenomenon}}, Annals of Mathematics,  (2011), pp.~519--541.

\bibitem{limani2026asymmetric}
{\sc A.~Limani and T.~Persson}, {\em {Asymmetric uniqueness sets in $\ell^{q}$}}, arXiv preprint arXiv:2603.08482,  (2026).

\bibitem{makarov1991class}
{\sc N.~G. Makarov}, {\em {On a class of exceptional sets in the theory of conformal mappings}}, Mathematics of the USSR-Sbornik, 68 (1991), pp.~19--30.

\bibitem{MatillaFourier}
{\sc P.~Mattila}, {\em Fourier analysis and Hausdorff dimension}, vol.~150 of Cambridge Studies in Advanced Mathematics, Cambridge University Press, Cambridge, 2015.

\bibitem{Piatetski-shapiro}
{\sc {Piatetski-Shapiro, Ilya}}, {\em Selected works of {I}lya {P}iatetski-{S}hapiro}, American Mathematical Society, Providence, RI, 2000.
\newblock Edited and with commentaries by James Cogdell, Simon Gindikin, Peter Sarnak, Pierre Deligne, Stephen Gelbart, Roger Howe and Stephen Rallis.

\bibitem{ross1994hyperinvariant}
{\sc W.~T. Ross, S.~Richter, and C.~Sundberg}, {\em {Hyperinvariant subspaces of the harmonic Dirichlet space}}, Journal f{\"u}r die reine und angewandte Mathematik, 448 (1994), p.~1.

\bibitem{rudin1969function}
{\sc W.~Rudin}, {\em Function Theory in Polydiscs}, vol.~41 of Mathematics Lecture Note Series, W. A. Benjamin, New York, 1969.

\end{thebibliography}

\Addresses

\end{document}